\newcommand{\norm}{N}
\newcommand{\normR}{{}_R\norm}
\newcommand{\Tr}{tr}
\newcommand{\m}[1]{{\protect\underline{#1}}}
\newcommand{\mM}{\m{M}}
\newcommand{\mSet}{\m{\Set}}
\newcommand{\mcC}{\m{\cC}}
\newcommand{\cc}[1]{\mathcal #1}
\newcommand{\cC}{\cc{C}}
\newcommand{\cO}{\cc{O}}
\newcommand{\ccZ}{\cc{Z}}
\newcommand{\cL}{\cc{L}}
\newcommand{\Sp}{\mathcal Sp}
\newcommand{\Set}{\mathcal Set}
\newcommand{\Comm}{\mathcal Comm}
\newcommand{\Ninfty}{N_\infty}
\newcommand{\Ab}{\mathcal Ab}
\newcommand{\cOrb}{\mathcal Orb}
\newcommand{\Mackey}{\mathcal Mackey}
\newcommand{\Symm}{\mathcal Sym}
\newcommand{\GComm}{G\mhyphen\Comm}
\newcommand{\Coeff}{\mathcal Coeff}
\newcommand{\mCoeff}{\m{\Coeff}}
\newcommand{\Mod}{\mathcal Mod}
\newcommand{\RMod}{R\mhyphen\Mod}
\newcommand{\mRMod}{R\mhyphen\m{\Mod}}
\newcommand{\To}{\Rightarrow}
\mathchardef\mhyphen=45
\numberwithin{equation}{section}
\newtheorem{theorem}{Theorem}[section]
\newtheorem{corollary}[theorem]{Corollary}
\newtheorem{proposition}[theorem]{Proposition}
\theoremstyle{remark}
\newtheorem{remark}[theorem]{Remark}
\newtheorem{example}[theorem]{Example}
\newtheorem{warning}[theorem]{Warning}
\theoremstyle{definition}
\newtheorem{definition}[theorem]{Definition}
\newcommand{\defemph}[1]{\textbf{#1}}
\begin{document}

\title{Equivariant symmetric monoidal structures}

\author[M.A.~Hill]{Michael A.~Hill}
\address{University of California Los Angeles \\ Los Angeles, CA 90095}
\email{mikehill@math.ucla.edu}
\thanks{M.~A.~Hill was supported in part by NSF DMS--1207774 and the Sloan Foundation} 

\author[M.J.~Hopkins]{Michael J.~Hopkins}
\address{Harvard University \\ Cambridge, MA 02138}
\email{mjh@math.harvard.edu}
\thanks{M.~J.~Hopkins was supported in part by NSF DMS--0906194}
\thanks{Both authors were supported by DARPA through the Air Force Office of Scientific Research (AFOSR) grant number HR0011-10-1-0054}

\begin{abstract}
Building on structure observed in equivariant homotopy theory, we define an equivariant generalization of a symmetric monoidal category: a $G$-symmetric monoidal category. These record not only the symmetric monoidal products but also symmetric monoidal powers indexed by arbitrary finite $G$-sets. We then define $G$-commutative monoids to be the natural extension of ordinary commutative monoids to this new context. Using this machinery, we then describe when Bousfield localization in equivariant spectra preserves certain operadic algebra structures, and we explore the consequences of our definitions for categories of modules over a $G$-commutative monoid.
\end{abstract}

\keywords{$G$-symmetric monoidal, $G$-commutative monoid, Bousfield localization, equivariant homotopy, motivic homotopy}

\subjclass[2010]{55N91, 19D23, 18D50, 18D35}

\maketitle

\section{Equivariant Algebra and Homotopy}

One of the primary stumbling blocks in equivariant homotopy theory is the transfer. This has many guises in the literature, and one often sees words like ``naive'' and ``genuine'' used to describe indexing categories, the invertibility of representation spheres, or the existence of transfers. For finite groups, however, all of these notions are equivalent \cite{BlumbergCont}.

The approach we take is to focus on the transfers, seeing how this structure plays out more generally. In particular, we describe a general structure on a symmetric monoidal category (really a sequence of categories indexed by conjugacy classes of subgroups) that mirrors the fixed points of the action of $G$ on a category. Depending on which transfers exist, the structure we see will interpolate between a ``naive'' version (having no transfers) and a ``genuine'' version (having all transfers). All of our discussion takes place at the level of the symmetric monoidal categories. If our categories have extra structure (like, say, a model structure), then we can ask for compatibility of our structure with the model structure (equivalently, asking if our functors appropriately derive). We will not address that point in this note, as it can be quite subtle. For $G$-spectra, this was the entirety of Appendix B in Hill-Hopkins-Ravenel \cite{HHR}.

The results of this paper were announced at the Stanford Conference ``Algebraic topology: applications and new directions'', and a related write-up was included in the conference proceedings \cite{HHInversion}.

\subsection*{Categorical Assumptions}
In all that follows, we will use the word ``functor'' to mean ``pseudofunctor''. In particular, commutativity of diagrams is only guaranteed up to (unspecified) isomorphism.

\subsection*{Acknowledgements}
We have profited from discussions with many people during the writing of this, and many people gave important and constructive feedback. We single out (alphabetically) Clark Barwick, Andrew Blumberg, Anna Marie Bohmann, Tyler Lawson, Peter May, Angelica Osorno, and Emily Riehl. 

\section{Symmetric monoidal coefficient systems and symmetric monoidal Mackey functors}
A central tenet in equivariant algebraic topology is that one never considers a group $G$ by itself; one always considers $G$ together with all of its subgroups. In this foray into equivariant category theory, we make use of this maxim, and we begin with structures analogous to the classical algebraic notions of a coefficient system and a Mackey functor.

\begin{definition}
Let $\cOrb_{G}$ denote the orbit category of $G$, and let $\Symm$ be the category of symmetric monoidal categories and strong monoidal functors.

A {\defemph{symmetric monoidal coefficient system}} is a contravariant functor $\m{\cC}\colon \cOrb_{G}\to\Symm$. The value of the functor on a map is called a ``restriction'' map.

A {\defemph{symmetric monoidal Mackey functor}} is a pair of functors $\mM$, one covariant and one contravariant, from $\cOrb_{G}\to \Symm$ which agree on objects and for which we have the standard double coset formula (up to isomorphism). The contravariant maps are restrictions, and the covariant maps are transfers.
\end{definition}

\begin{remark}
Since the first drafts of this paper, two very natural higher categorical enrichments of the definition of a symmetric monoidal Mackey functor have been worked out:
\begin{enumerate}
\item Bohmann-Osorono consider a $2$-categorical version permutative Mackey functors where their $2$-cells are exactly enumerating coherency conditions for the double coset isomorphisms above \cite{BohOso}.
\item Barwick-Dotto-Glasman-Nardin-Shah have produced the natural $\infty$-categorical extension as a kind of Cartesian fibration over the category of finite $G$-sets \cite{Bourbon1, Bourbon2}, analogous to Lurie's reformulation of a symmetric monoidal product \cite{LurieAlgII}.
\end{enumerate}
\end{remark}

In many of the natural examples, the restriction maps are actually strict symmetric monoidal functors, while the transfer maps are only strong monoidal.

In particular, in a symmetric monoidal Mackey functor, we have for each subgroup $H\subset G$ a symmetric monoidal category $\mM(G/H)$, and we have symmetric monoidal restriction and transfer maps linking these symmetric monoidal categories.

Just as the simplest example of a symmetric monoidal category is a commutative monoid, the simplest example of a symmetric monoidal Mackey functor is a Mackey functor.

\begin{example}
If $\mM$ is a Mackey functor, then $\mM$ naturally defines a permutative monoidal Mackey functor. The value on $G/H$ is just $\mM(G/H)$ viewed as a discrete category with symmetric monoidal product given by the addition on $\mM(G/H)$. The restriction and transfer maps are the ones coming from $\mM$.
\end{example}

The next example is the most important one, playing the role in this theory that the category of finite sets does for ordinary symmetric monoidal categories.

\begin{example}
Let $\mSet$ be the functor which associates to $G/H$ the category of finite $G$-sets over $G/H$. The disjoint union of finite $G$-sets makes this into a symmetric monoidal category, and the pullback along a map $G/K\to G/H$ defines a strong symmetric monoidal restriction functor
\[
\mSet(G/H)\to\mSet(G/K).
\]
Composition with the map $G/K\to G/H$ gives the transfer map
\[
\mSet(G/K)\to\mSet(G/H),
\]
and the compatibility is the usual Mackey double-coset formula.
\end{example}

\begin{remark}
The assignment
\[
(T\xrightarrow{p} G/H)\mapsto p^{-1}(eH)
\]
gives an equivalence of categories between $\mSet(G/H)$ and $\Set^H$. In all that follows, we will blur this distinction, referring to objects in $\mSet(G/H)$ as $H$-sets. The functoriality of the construction is most easily understood using this comma category definition, however.
\end{remark}

Finite $H$-sets are of course $H$-objects in the category of finite sets. The symmetric monoidal Mackey functor structure on $\mSet$ is actually generic, applying to $H$-objects in a symmetric monoidal category. Again, for transparency, we choose the equivalent model
\[
\mcC^H\equiv \mcC^{B_{G/H}},
\]
where $B_{G/H}$ is the translation category of the $G$-set $G/H$. The associated $G$-symmetric monoidal structure is studied extensively in Hill-Hopkins-Ravenel.

\begin{example}[{\cite[A.3]{HHR}}]\label{ex:GObjects}
If $\cC$ is a symmetric monoidal category, then the category of objects of $\cC$ with a  $G$-action, $\cC^{G}$ naturally extends to a symmetric monoidal Mackey functor. The underlying functor on objects for $\mcC$ is
\[
G/H\mapsto\cC^{B_{G/H}},
\]
and the restriction maps arise from the natural maps on the indexing categories. 

For the transfer maps, we observe that these quotient maps, when translated into translation categories become finite covering categories \cite[A.3]{HHR}. Associated to such a covering category, we have canonical strong symmetric monoidal transfer maps. 
\end{example}

Example~\ref{ex:GObjects} applies in a huge number of contexts: in $G$-spectra, it produces the Hill-Hopkins-Ravenel norm; in $G$-modules, it provides essentially the Evens norm \cite{Evens}; and in $G$-spaces with the cartesian product or disjoint union, it gives coinduction or induction respectively.

Combining the transfers and restrictions allows us to take symmetric monoidal products of any diagram indexed by finite $G$-sets, rather than just by finite sets. This provides a convenient language for understanding the structure present on mathematical objects like the categories of acyclics for a localization or for categories of modules over a ring.

\section{\texorpdfstring{$\mSet$}{Set} and \texorpdfstring{$\mSet$}{Set}-modules}
%
We begin with an observation. For any symmetric monoidal category $\cC$ and for any finite set $T$ (here we have no equivariance), we have a strong symmetric monoidal functor $\cC\to\cC$ given by
\[
M\mapsto \bigotimes_{T}M=: T \Box M.
\]
This is natural in isomorphisms in the $T $ factor and in any map in the $M$ factor, and thus gives a strong bilinear functor
\[
-\Box-\colon \Set^{Iso}\times\cC\to\cC.
\]
We call this the canonical exponentiation map. Our notion of genuine $G$-symmetric monoidal structures and the more general $\cO$-symmetric monoidal structures are simply extensions of this functor over $\mSet$ using the canonical fully faithful inclusion of $\Set$ (viewed as a constant coefficient system) into $\mSet$ as the trivial objects.

\subsection{Genuine $G$-Symmetric Monoidal Structures}
The category of $G$-sets has a second symmetric monoidal structure: cartesian product. This is compatible with the restriction map in the sense that the restriction functor is also a strong symmetric monoidal functor for this symmetric monoidal product. We can package this data in the language of multicategories, building on work of Elmendorf-Mandell and Bohmann-Osorno connecting symmetric monoidal categories and multicategories.

There is a multicategory of symmetric monoidal categories, where the multimaps from a collection of symmetric monoidal categories $\cC_1, \dots, \cC_n$ to a symmetric monoidal category $\mathcal D$ are functors of underlying categories
\[
\cC_1\times\dots\times\cC_n\to\mathcal D
\]
that are strong symmetric monoidal in each factor separately (see \cite[Def 3.2]{EM06} for more detail). By analogy with vector spaces, a map in the multicategory of symmetric monoidal categories is called ``multilinear''. This extends easily to a multicategory of symmetric monoidal coefficient systems using  a trivial observation.

\begin{proposition}
If $\mcC_1$ and $\mcC_2$ are symmetric monoidal coefficient systems, then the assignment
\[
G/H\mapsto\mcC_1(G/H)\times\mcC_2(G/H)
\]
is a category-valued coefficient system. This is the product in symmetric monoidal coefficient systems.
\end{proposition}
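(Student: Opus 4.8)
The plan is to reduce the statement to the fact that the $2$-category $\Symm$ of symmetric monoidal categories has finite products computed on underlying categories, together with the general principle that a category of (pseudo)functors into a $2$-category with products inherits those products pointwise.

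First I would check that $\Symm$ has binary products. Given symmetric monoidal categories $(\cC_1,\otimes_1,\mathbf 1_1)$ and $(\cC_2,\otimes_2,\mathbf 1_2)$, take the cartesian product $\cC_1\times\cC_2$ of underlying categories with componentwise tensor $(X_1,X_2)\otimes(Y_1,Y_2):=(X_1\otimes_1 Y_1,X_2\otimes_2 Y_2)$, unit $(\mathbf 1_1,\mathbf 1_2)$, and associativity, unit, and symmetry constraints taken componentwise; every coherence axiom holds because it holds in each factor. The two projections are strict, hence strong, monoidal functors. For the universal property, a strong monoidal functor $\ccD\to\cC_1\times\cC_2$ is the same thing as a pair of functors $F_i\colon\ccD\to\cC_i$ of underlying categories, and a monoidal structure isomorphism $F(-)\otimes F(-)\xrightarrow{\sim}F(-\otimes-)$ over $\cC_1\times\cC_2$ is precisely a pair of such isomorphisms over $\cC_1$ and over $\cC_2$, which satisfies the monoidal coherences exactly when each $F_i$ does; similarly monoidal natural isomorphisms with target $\cC_1\times\cC_2$ are detected componentwise.

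Applying this pointwise, I would define the candidate product: on objects $G/H\mapsto\mcC_1(G/H)\times\mcC_2(G/H)$, on a morphism $f\colon G/K\to G/H$ the strong monoidal functor $\mcC_1(f)\times\mcC_2(f)$, with compositor and unitor isomorphisms given by the products of those of $\mcC_1$ and $\mcC_2$ (recall the paper's standing convention that ``functor'' means ``pseudofunctor''). The pseudofunctor coherences --- the associativity pentagon for compositors and the triangle for unitors --- hold because they hold in each factor, so the assignment is a symmetric monoidal coefficient system, in particular a $\Cat$-valued one as in the statement. For the second sentence, a morphism of symmetric monoidal coefficient systems $\cc{E}\to\mcC_1\times\mcC_2$ is a pseudonatural transformation of pseudofunctors $\cOrb_G\to\Symm$; evaluating at each $G/H$ and using the pointwise universal property identifies its data with a pair of components $\cc{E}(G/H)\to\mcC_i(G/H)$, and its pseudonaturality $2$-cells with pseudonaturality $2$-cells of the two resulting transformations $\cc{E}\to\mcC_i$, once more because the relevant $2$-cells in $\Symm$ are componentwise. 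With the pointwise projections as structure maps this yields the natural bijection $\Hom(\cc{E},\mcC_1\times\mcC_2)\cong\Hom(\cc{E},\mcC_1)\times\Hom(\cc{E},\mcC_2)$.

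I do not expect a genuine obstacle: the content is purely formal. The only point requiring attention is the $2$-categorical bookkeeping --- keeping track of the pseudofunctor compositors and the pseudonaturality cells and confirming that each coherence diagram one must verify is literally the product, formed in $\cC_1\times\cC_2$, of the corresponding diagrams in $\mcC_1$ and $\mcC_2$. Since the paper works ``up to (unspecified) isomorphism'' throughout, I would record this verification as routine rather than write out the pentagons.
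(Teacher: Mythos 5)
Your argument is correct. The paper gives no proof at all --- it introduces this proposition as ``a trivial observation'' --- and your componentwise verification (products in $\Symm$ are computed on underlying categories, hence lift pointwise to pseudofunctors $\cOrb_G\to\Symm$) is exactly the routine argument the authors are taking for granted.
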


In this way, we can make sense of a multicategory of symmetric monoidal coefficient systems. The objects are symmetric monoidal coefficient systems, and the multi-maps are natural transformations which for each $G/H\in\cOrb$ are multilinear maps.

The coefficient system $\mSet$ will play an important role here, since it becomes a kind of commutative monoid in coefficient systems.

\begin{proposition}
The Cartesian product induces a bilinear map
\[
\mSet\times\mSet\to\mSet
\]
which takes a pair of finite $H$-sets to their Cartesian product (equivalently the fiber product in the slice category over $G/H$).
\end{proposition}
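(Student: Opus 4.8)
The plan is to unwind what a bilinear map in the multicategory of symmetric monoidal coefficient systems amounts to in this case. By definition it is the data of, for each subgroup $H\subset G$, a functor
\[
\otimes_H\colon \mSet(G/H)\times\mSet(G/H)\to\mSet(G/H)
\]
that is strong symmetric monoidal in each variable separately (in the sense of \cite[Def.~3.2]{EM06}) --- each $\mSet(G/H)$ carrying its disjoint-union symmetric monoidal structure --- together with isomorphisms making these functors commute with the restriction maps, subject to the usual coherences. I would take $\otimes_H$ to be the fiber product over $G/H$, equivalently the categorical product in the slice category $\mSet(G/H)$; under the equivalence $\mSet(G/H)\simeq\Set^H$ noted above this is just the Cartesian product of finite $H$-sets.

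The substantive point is that the Cartesian product of finite $H$-sets is strong monoidal in each variable with respect to disjoint union. Fixing $T$, one needs natural isomorphisms
\[
\emptyset\times T\cong\emptyset,
\qquad
(S\sqcup S')\times T\cong(S\times T)\sqcup(S'\times T),
\]
i.e.\ the distributive law, which holds on underlying sets and is visibly $H$-equivariant (equivalently, the category of finite $H$-sets is extensive). The associativity, unit, and symmetry coherence diagrams for this strong monoidal structure then follow because the Cartesian product of sets is associative, unital, and symmetric; and symmetry of the Cartesian product upgrades ``strong monoidal in the second variable'' to the analogous statement for the first, the two strong monoidal structures interchanging in the manner required of a multilinear map.

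It remains to check naturality in the restriction maps. For a map of orbits $G/K\to G/H$, the restriction functor $\mSet(G/H)\to\mSet(G/K)$ is, under the equivalences above, restriction of a finite $H$-set along a group homomorphism $K\to H$; such functors manifestly commute with Cartesian products and disjoint unions, the comparison isomorphisms being identities on underlying sets. From this it is routine that these isomorphisms are compatible with the distributivity isomorphisms and satisfy the cocycle condition over composable restrictions, which --- since everything is phrased for pseudofunctors --- is all that is required. I expect the only slightly fussy part to be the coherence bookkeeping: confirming that the structure isomorphisms assemble into honest natural transformations of coefficient systems rather than merely pointwise data. Since every comparison map in sight is an identity on underlying sets or is forced by a universal property, this is automatic, and I do not anticipate a real obstacle.
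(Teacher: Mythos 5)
Your proof is correct and is exactly the routine verification one would give; the paper in fact states this proposition with no proof at all, treating it as immediate from the definitions. Your unwinding---that a bilinear map of coefficient systems is a levelwise functor strong symmetric monoidal in each variable and compatible with restriction, that strong monoidality in each variable for $(\times,\sqcup)$ is the distributive law in $\Set^H$, and that restriction along $K\to H$ strictly preserves both $\times$ and $\sqcup$---is the intended argument.
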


Our notion of a [genuine] $G$-symmetric monoidal structure on a symmetric monoidal coefficient system $\mcC$ is then the notion of a module over $\mSet$.

\begin{definition}
Let $\mcC$ be a symmetric monoidal coefficient system functor. Then a [genuine] $G$-symmetric monoidal structure on $\mcC$ is a bilinear map 
\[
-\Box-\colon\mSet^{Iso}\times\mcC\to\mcC,
\]
making $\mcC$ into a module over $\mSet^{Iso}$ in the sense that

\begin{enumerate}
\item for each $H\subset G$, when restricted to $\Set^{Iso}\subset \mSet(G/H)$, this is the canonical exponential map on the symmetric monoidal category $\mcC(G/H)$, and
\item the following diagram of symmetric monoidal coefficient systems commutes up to natural isomorphism:
\[
\xymatrix{
{\mSet^{Iso}\times\mSet^{Iso}\times\mcC}\ar[r]^-{1\times \Box}\ar[d]_{(-\times-) \times 1} & {\mSet^{Iso}\times\mcC}\ar[d]^{\Box} \\
{\mSet^{Iso}\times\mcC}\ar[r]_-{\Box} & {\mcC.}}
\]
\end{enumerate}
\end{definition}

Even though the extension of the canonical exponential map is part of the data of a $G$-symmetric monoidal category structure on a symmetric monoidal coefficient system $\mcC$, we will follow the standard conventions in category theory and refer only to $\mcC$.

\begin{definition}
If $\mcC_1$ and $\mcC_2$ are $G$-symmetric monoidal categories, then a strong $G$-monoidal functor $F\colon\mcC_1\to\mcC_2$ is a functor $F$ together with natural isomorphisms
\[
T\Box F(-)\To F(T\Box -).
\]
for all $G/H\in\cOrb_G$ and for all $T\in\mSet(G/H)$.

Lax, weak, and the co-versions are defined analogously.
\end{definition}

If $\mcC$ is actually a symmetric monoidal Mackey functor, then there is a canonical $G$-symmetric monoidal structure, determined by decomposing any $H$-set into orbits.
\begin{proposition}
If $\mcC$ is a symmetric monoidal Mackey functor with transfer maps $Tr_{H}^{K}$ for $H\subset K$, then the assignment
\[
K/H\Box M :=Tr_{H}^{K}i_{H}^{\ast}M
\]
gives $\mcC$ a genuine $G$-symmetric monoidal structure.
\end{proposition}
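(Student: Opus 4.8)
The plan is to build $\Box$ orbit by orbit and then reduce both module axioms to the double coset formula. For a finite $K$-set $T$, pick a decomposition into orbits $T\cong\coprod_{i}K/H_{i}$ and set $T\Box M:=\bigotimes_{i}Tr_{H_{i}}^{K}i_{H_{i}}^{\ast}M$, the tensor product formed in the symmetric monoidal category $\mcC(G/K)$. The first task is well-definedness and functoriality in the $\mSet^{Iso}$-variable: a reordering of the orbits is absorbed by the symmetry isomorphisms of $\mcC(G/K)$, and an isomorphism $K/H\xrightarrow{\sim}K/H'$, which is necessarily induced by a conjugation $H'=gHg^{-1}$, is sent to the composite of the conjugation isomorphism $Tr_{H}^{K}\cong Tr_{gHg^{-1}}^{K}c_{g}$ supplied by the Mackey functor with the $K$-action isomorphism $c_{g}M\cong M$. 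Since two orbit decompositions of $T$ differ by such data, $T\mapsto T\Box M$ is a well-defined functor $\mSet^{Iso}(G/K)\to\mcC(G/K)$, and functoriality in $M$ comes directly from the Mackey-functor structure.

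First I would check bilinearity. In the $\mSet$-variable it is essentially definitional: $\emptyset\Box M$ is the empty product, i.e. the monoidal unit, and $(S\sqcup T)\Box M\cong(S\Box M)\otimes(T\Box M)$ since concatenating orbit decompositions concatenates tensor factors. In the $\mcC$-variable it suffices to treat a single orbit $K/H$: strictness of restriction gives $i_{H}^{\ast}(M\otimes N)\cong i_{H}^{\ast}M\otimes i_{H}^{\ast}N$ and $i_{H}^{\ast}\mathbf{1}\cong\mathbf{1}$, and the transfer $Tr_{H}^{K}$ being strong monoidal then gives $Tr_{H}^{K}(i_{H}^{\ast}M\otimes i_{H}^{\ast}N)\cong Tr_{H}^{K}i_{H}^{\ast}M\otimes Tr_{H}^{K}i_{H}^{\ast}N$ and $Tr_{H}^{K}\mathbf{1}\cong\mathbf{1}$; reassembling over the orbits of a general $T$ handles the rest. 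Axiom (1) is then immediate: a trivial $K$-set of cardinality $n$ is $\coprod^{n}K/K$, and since $Tr_{K}^{K}$ and $i_{K}^{\ast}$ are identities we get $(\coprod^{n}K/K)\Box M=\bigotimes^{n}M$, the canonical exponential map.

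The substance is the remaining two compatibilities, and both are the double coset formula in disguise. First, $\Box$ must be a natural transformation of symmetric monoidal coefficient systems, i.e. commute with restriction $i_{L}^{\ast}$ for $L\subset K$; reducing to an orbit $T=K/H$, the left side $i_{L}^{\ast}Tr_{H}^{K}i_{H}^{\ast}M$ is rewritten by the double coset formula as $\bigotimes_{LgH}Tr_{L\cap gHg^{-1}}^{L}c_{g}i_{g^{-1}Lg\cap H}^{\ast}i_{H}^{\ast}M$, and using $c_{g}M\cong M$ and collapsing restrictions this is $\bigotimes_{LgH}Tr_{L\cap gHg^{-1}}^{L}i_{L\cap gHg^{-1}}^{\ast}M$, which is exactly $\big(\coprod_{LgH}L/(L\cap gHg^{-1})\big)\Box(i_{L}^{\ast}M)=(i_{L}^{\ast}(K/H))\Box(i_{L}^{\ast}M)$, as required. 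Second, for the associativity square $(S\times T)\Box M\cong S\Box(T\Box M)$, bilinearity in the $S$-slot together with distributivity of $\times$ over $\sqcup$ reduces to $S=K/L$ an orbit, and bilinearity in the $\mcC$-slot lets one further take $T=K/H$; then $K/L\Box(K/H\Box M)=Tr_{L}^{K}i_{L}^{\ast}Tr_{H}^{K}i_{H}^{\ast}M$, and applying the restriction-compatibility just proved, then the transfer being strong monoidal to pull $Tr_{L}^{K}$ inside the product, then transitivity of transfers $Tr_{L}^{K}Tr_{H'}^{L}\cong Tr_{H'}^{K}$, produces $\bigotimes_{LgH}Tr_{L\cap gHg^{-1}}^{K}i_{L\cap gHg^{-1}}^{\ast}M$, which is precisely $(K/L\times K/H)\Box M$ since $K/L\times K/H\cong\coprod_{LgH}K/(L\cap gHg^{-1})$.

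All the isomorphisms in play are the canonical ones coming from the monoidal and Mackey-functor structures, and because we work throughout with pseudofunctors and diagrams that commute only up to unspecified isomorphism, no further coherence needs to be carried along. The hard part is therefore not conceptual but organizational: keeping the iterated applications of the double coset formula, the conjugation isomorphisms, and the transitivity of transfers aligned so that the two sides of each square are identified orbit by orbit and double-coset by double-coset.
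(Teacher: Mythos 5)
Your proof is correct and supplies detail the paper leaves implicit. The paper's own treatment of this proposition is essentially a one-line remark: it identifies the statement as a categorical form of Frobenius reciprocity, quoting the projection formula $(G\times_{H} S)\times T \cong G\times_{H}(S\times i_{H}^{\ast} T)$, and leaves the verification to the reader. You are following the same underlying route in explicit form: the orbit-by-orbit definition of $\Box$, the reduction of restriction-compatibility to the double coset formula, and the reduction of the associativity square to the double coset formula plus transitivity of transfers are precisely how the projection formula manifests in the Mackey-functor language. Two small remarks: the invocation of ``strictness'' of the restriction maps is a slight over-assumption --- the paper's definition only guarantees that restrictions are strong monoidal, which is all your argument needs --- and a fully rigorous treatment of well-definedness across orbit decompositions would need a word on coherence of the conjugation and symmetry isomorphisms, but the paper's blanket convention that everything is a pseudofunctor and commutes only up to unspecified isomorphism makes this acceptable at the paper's level of precision.
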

This is simply a categorical version of Frobenius reciprocity. Classically, Frobenius reciprocity is the statement that the cartesian product and induction are linked via a natural isomorphism:
\[
(G\times_{H} S)\times T \cong G\times_{H}(S\times i_{H}^{\ast} T ).
\]
The $G$-symmetric monoidal structure on a symmetric monoidal Mackey functor is in turn defined by this property. This is a categorification of the classical result that a Mackey functor is canonically a module over the Burnside ring Mackey functor $\underline{A}$. This approach has been studied in beautiful work of Berman which shows that there is a natural, $\infty$-categorial collection of properties which characterize modules over $\mSet$ and the correspondence version thereof \cite{Berman}.

Thus all of the situations considered in the previous section naturally give genuine $G$-symmetric monoidal structures.  In fact, most of the interesting and natural examples known to the authors arise in this way. There is a surprising pathological example, however.

\begin{example}
For any symmetric monoidal category, there is a trivial genuine $G$-symmetric monoidal structure given by simply composing with the strong symmetric monoidal restriction functor $\mSet\to\mSet^{tr}$, where $\mSet^{tr}$ is the constant coefficient system with value $\Set$.

Except in very trivial situations, this does not arise from a symmetric monoidal Mackey functor structure, and nor do any of the genuine $G$-symmetric monoidal structures interpolating between this and a more natural one.
\end{example}

\subsection{Comparisions to symmetric monoidal $G$-categories}

Guillou-May and their coauthors have created an extensive body of work around $G$-categories and the notion of a symmetric monoidal structure therein \cite{GuillouMay}. These are very natural examples of our $G$-symmetric monoidal categories, so we briefly describe how any Guillou-May symmetric monoidal $G$-category gives rise to a symmetric monoidal Mackey functor. This kind of approach is used heavily by Bohmann-Osorono \cite{BohOso} in their construction of Eilenberg-MacLane spectra associated to a Mackey functor. 

Guillou-May consider $G$-categories, by which they mean internal categories in $G$-sets. They then consider the categorical Barrat-Eccles operad $\mathcal E_\infty$ which in level $n$ is simply $\Sigma_n$ viewed as an indiscrete category (so any two objects are uniquely isomorphic). The category of categories is symmetric monoidal under Cartesian product, so Example~\ref{ex:GObjects} fits the category of $G$-categories into a symmetric monoidal Mackey functor. In particular, we can apply the norm functor there to the trivial operad $\mathcal E_\infty$, getting the equivariant Barrat-Eccles operad $\mathcal E_\infty^G$.

Using this operad, Guillou and May define ``symmetric monoidal $G$-categories'' as the pseudo-algebras for the categorical operad $\mathcal E_\infty^G$ (the actual algebras are permutative $G$-categories). The operad $\mathcal E_\infty^G$ parameterizes all possible norms from finite $H$-sets, and thus any symmetric monoidal $G$-category gives rise to a canonical symmetric monoidal Mackey functor and hence a genuine $G$-symmetric monoidal category. This construction is visibly a functor from the category of symmetric monoidal $G$-categories and (strong, lax, etc)-maps to the category of $G$-symmetric monoidal categories. However, as the example of coefficient systems in Section~\ref{sec:Coeff} shows, not all genuine $G$-symmetric monoidal categories arise in this way.

\subsection{Genuine $G$-Commutative Monoids}
Classically, in a symmetric monoidal category $\cC$, we can describe the commutative monoids using the tensoring operation over $\Set$. An object $M\in\cC$ defines a functor
\[
\Set^{Iso}\to\cC
\]
by $T \mapsto T \Box M$.

\begin{proposition}
A commutative monoid structure on $M$ is equivalent to an extension
\[
\xymatrix{
{\Set^{Iso}}\ar[r]^{-\Box M}\ar[d] & {\cC.} \\
{\Set}\ar@{-->}[ur] & {}
}
\]
\end{proposition}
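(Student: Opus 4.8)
The plan is to recognize the statement as the classical fact that $(\Set,\sqcup,\varnothing)$ --- finite sets and \emph{all} maps, under disjoint union --- is the PROP for commutative monoids, and to prove it by exhibiting the passage back and forth explicitly, in a form that will adapt with no change of substance when $\Set$ is replaced by $\mSet$ later. First I would fix the reading of the diagram: it lives in $\Symm$, the solid arrow $\Set^{Iso}\to\cC$ is the strong symmetric monoidal functor $-\Box M$, the arrow $\Set^{Iso}\to\Set$ is the (strong symmetric monoidal) inclusion of the groupoid of isomorphisms, and the dashed arrow is to be a strong symmetric monoidal extension of $-\Box M$ along it. The starting observation is that $\Set^{Iso}$ is the free symmetric monoidal category on one object, so that a strong symmetric monoidal functor out of it is exactly a choice of object --- here $M$, with the functor $T\mapsto T\Box M$ --- and all of the content therefore sits in the non-invertible maps of $\Set$.

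For the forward direction I would begin with a commutative monoid $(M,\mu,\eta)$ and build the extension $F\colon\Set\to\cC$ by keeping $F(T)=T\Box M=\bigotimes_T M$ on objects and sending $f\colon S\to T$ to the morphism $\bigotimes_S M\to\bigotimes_T M$ whose component at $t\in T$ is the iterated multiplication $\bigotimes_{f^{-1}(t)}M\to M$, with an empty fibre contributing $\eta$, a singleton fibre the identity, and a larger fibre an iterate of $\mu$. The verifications to carry out are that this is independent of the order in which the products are formed and that $F(g\circ f)=F(g)\circ F(f)$; both are instances of the coherence theorem for commutative monoids in a symmetric monoidal category, i.e. of associativity, commutativity and unitality applied repeatedly. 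That $F$ is strong symmetric monoidal and restricts to $-\Box M$ on $\Set^{Iso}$ would then be read off directly from the construction.

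For the reverse direction, given a strong symmetric monoidal extension $F$, I would set $\eta$ to be the composite $\mathbf 1\cong F(\varnothing)\to F(\{\ast\})=M$ induced by $\varnothing\to\{\ast\}$, and $\mu$ to be the composite $M\otimes M\cong F(\{1,2\})\to F(\{\ast\})=M$ induced by the fold $\nabla\colon\{1,2\}\to\{\ast\}$, the displayed isomorphisms being the coherence data of $F$ and of $-\Box M$. The monoid axioms then fall out of pushing relations in $\Set$ through $F$: the unit axiom from the fact that the first-coordinate inclusion $\{\ast\}\to\{1,2\}$ followed by $\nabla$ is the identity; commutativity from $\nabla\circ\tau=\nabla$ for the transposition $\tau\in\Sigma_2\subset\Set^{Iso}$, together with the fact that $F(\tau)$ is forced to be the symmetry isomorphism of $\cC$ on $M\otimes M$; and associativity from the equality in $\Set$ of the two composites $\{1,2,3\}\to\{1,2\}\to\{\ast\}$ with the unique map onto a point, the intermediate tensor objects being identified by the coherence data. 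Finally I would check that the two constructions are mutually inverse up to isomorphism: extracting $(\mu,\eta)$ from the functor built out of a commutative monoid plainly returns the original operations, and conversely a strong symmetric monoidal extension of $-\Box M$ is determined by its values on $\nabla$ and on $\varnothing\to\{\ast\}$, since those maps, together with the isomorphisms and the monoidal structure, generate all of $\Set$.

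The principal obstacle is coherence bookkeeping rather than any single computation. In the forward direction the honest work is in turning ``iterated multiplication over a fibre'' into a genuine functor, which is precisely where one invokes the coherence theorem; in the reverse direction the key subtlety is that reconstructing $F$ from $\nabla$ and the unit map genuinely uses that the extension is \emph{strong} monoidal, and not merely a functor extending $T\mapsto T\Box M$ --- a plain functor extension is strictly weaker data (already visible over $\cC=\Ab$ with $M=\Z$, where there are functor extensions of $T\mapsto T\Box\Z$ that are not symmetric monoidal and do not arise from any commutative monoid), so strong monoidality cannot be dropped from the statement of the dashed arrow. Under the pseudofunctor conventions of the paper, keeping careful track of which squares commute on the nose and which only up to the unspecified coherence isomorphisms is the point to handle with care; this is also the form in which I would phrase the argument so that replacing ``finite set'' by ``finite $G$-set over $G/H$'' throughout yields the corresponding statement for $\mSet$.
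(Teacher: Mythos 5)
Your argument is correct and takes essentially the same route as the paper: the reverse direction—extracting $\mu$ from $\nabla\colon\ast\amalg\ast\to\ast$, $\eta$ from $\emptyset\to\ast$, and reading off commutativity and associativity from $\nabla\circ\tau=\nabla$ and $\nabla\circ(\nabla\amalg 1)=\nabla\circ(1\amalg\nabla)$—is exactly the paper's proof, and the forward direction you spell out (functoriality via iterated multiplication over fibres, justified by coherence) is what the paper compresses into ``the converse is also standard.'' Your added remark that the extension must be strong monoidal, not merely a functor extending $T\mapsto T\Box M$, is a correct and worthwhile observation that the paper leaves implicit.
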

\begin{proof}
Consider the canonical map in $\Set$ $\nabla\colon\ast\amalg\ast\to\ast$. If we have an extension of $-\Box M$ over all of $\Set$, then we have a multiplication map
\[
M\otimes M\cong (\ast\amalg\ast)\Box M\to \ast\Box M
\cong M.
\]
If $\tau$ is the twist map $\ast\amalg\ast\to\ast\amalg\ast$, then $\nabla\circ\tau=\nabla$. This means that the multiplication on $M$ is commutative. Similarly,
\[
\nabla\circ(\nabla\amalg 1)=\nabla\circ(1\amalg \nabla),
\]
which shows the multiplication is associative. Finally, the map $\emptyset\to\ast$ gives the unit map.

The converse is also standard.
\end{proof}

Given a genuine $G$-symmetric monoidal category $\mcC$, we can make an analogous definition. If $H\subset G$, then let $i_{H}^{\ast}$ denote the symmetric monoidal functor $\mcC({G/G})\to \mcC({G/H})$. Since $G/G$ is a terminal object in the category $\cOrb_{G}$, an object $M$ in $\mcC(G/G)$ canonically defines elements $i_{H}^{\ast}M$ for all other orbits $G/H$. In the following definition, we will also use $M$ for the images in $\mcC(G/H)$ for any $H$.

\begin{definition}
A [genuine] $G$-commutative monoid is an object $M\in\mcC(G/G)$ together with an extension of functors of symmetric monoidal coefficient systems
\[
\xymatrix{
{\mSet^{Iso}}\ar[r]^{-\Box M}\ar[d] & {\mcC.} \\
{\mSet}\ar@{-->}[ur]_{N^{(-)}(M)} & {}
}
\]

For a subgroup $H\subset G$, the map $G/H\Box M\to M$ is the ``norm'' from $H$ to $G$, and is denoted $N_H^G$.

If $\mcC$ is a symmetric monoidal Mackey functor, then we impose a compatibility condition that the following diagram commutes:
\[
\xymatrix{
{N_{H}^{G}(H/K\Box i_{H}^{\ast}M)} \ar[r]^-{N_{K}^{H}} \ar[d]_{\cong} & {N_{H}^{G} i_{H}^{\ast}M\cong G/H\Box M} \ar[r]^-{N_{H}^{G}} & {M.} \\
{G/K\Box M}\ar[urr]_-{N_{K}^{G}}
}
\]
\end{definition}

Just as for categories, although the multiplication and norms on $M$ are an essential piece of the data, we will refer to a $G$-commutative monoid simply as the underlying object.

\begin{definition}
If $M$ and $M'$ are $G$-commutative monoids in $\mcC$, then a map of $G$-commutative monoids from $M$ to $M'$ is a morphism $f\colon M\to M'\in\mcC(G/G)$ such that for all $G/H\in\cOrb_G$ and for all $T\in\mSet(G/H)$, we have a commutative diagram
\[
\xymatrix{
{T\Box M}\ar[r]^{N^T}\ar[d]_{T\Box f} & {M}\ar[d]^{f} \\
{T\Box M'}\ar[r]_{N^T} & {M'.}
}
\]
\end{definition}

Equivalently, a map of $G$-commutative monoids is simply a natural transformations between the two functors $N^{(-)}M\colon \mSet\to\mcC$ and $N^{(-)}M'\colon\mSet\to\mcC$.

Thus a $G$-commutative monoid is a commutative monoid $M$ in $\mcC(G/G)$, together with commutative monoid maps
\[
N_{H}^{G}\colon G/H\Box M\to \ast\Box M\cong M
\]
which are natural in the subgroup $H$. Similarly, a map of $G$-commutative monoids is a map of the underlying monoids that commutes with all norm maps.

If $\mcC$ was actually a symmetric monoidal Mackey functor endowed with the canonical genuine $G$-symmetric monoidal structure, then the added condition on norm maps is that they should compose in the sense that
\[
N_{H}^{G}\circ N_{K}^{H} M\to M
\]
and
\[
N_{K}^{G} M\to M
\]
differ only by the natural isomorphism
\[
N_{H}^{G}\circ N_{K}^{H} M\cong N_{K}^{G} M.
\]

\begin{definition}
If $\mcC$ is a $G$-commutative monoid, then let $\GComm(\mcC)$ denote the category of $G$-commutative monoids in $\mcC$ and maps of $G$-commutative monoids.
\end{definition}

The following proposition is immediate, using the standard argument for ordinary commutative monoids.

\begin{proposition}
If $F$ is a lax $G$-monoidal functor between $G$-symmetric monoidal categories $\mcC_1$ and $\mcC_2$, then $F$ induces a functor
\[
\GComm(\mcC_1)\to\GComm(\mcC_2).
\]
\end{proposition}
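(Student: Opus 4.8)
The plan is to imitate the classical argument that a lax symmetric monoidal functor carries commutative monoids to commutative monoids, twisted by the lax $G$-monoidal structure cells of $F$ so that it also transports the norms. Fix a $G$-commutative monoid $M\in\mcC_1(G/G)$, and write $\lambda_T\colon T\Box F(-)\To F(T\Box-)$ for the lax structure $2$-cells of $F$, one for each orbit $G/H$ and each $T\in\mSet(G/H)$. The first observation I would record is that $F$ is, in particular, lax symmetric monoidal on each $\mcC_i(G/H)$: by axiom~(1) in the definition of a $G$-symmetric monoidal structure, $(\ast\amalg\ast)\Box F(-)$ is the functor $F(-)\otimes F(-)$ and $\emptyset\Box F(-)$ is constant at the unit, so $\lambda_{\ast\amalg\ast}$ and $\lambda_{\emptyset}$ are exactly the multiplicative and unit constraints of a lax monoidal functor. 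Hence the classical argument already makes $F(M)$ a commutative monoid in $\mcC_2(G/G)$, functorially in $M$, and sends commutative-monoid maps to commutative-monoid maps.

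Next I would produce the norms. Since $F$ is a map of symmetric monoidal coefficient systems it commutes with restriction up to isomorphism, so $F(i_H^{\ast}M)\cong i_H^{\ast}F(M)$; for any $T\in\mSet(G/H)$ I would then set
\[
\widetilde N^T\colon T\Box F(i_H^{\ast}M)\xrightarrow{\ \lambda_T\ }F\bigl(T\Box i_H^{\ast}M\bigr)\xrightarrow{\ F(N^T)\ }F(i_H^{\ast}M)\cong i_H^{\ast}F(M),
\]
the case $T=G/H$ giving the candidate norm $\widetilde N_H^G$. Using the reformulation recorded above — that a $G$-commutative monoid is a commutative monoid together with commutative-monoid norm maps natural in the orbit — what remains is to check: (a) each $\widetilde N^T$ is a map of commutative monoids, by pasting the compatibility of $\lambda_T$ with the fold and unit maps against the fact that $N^T$ is already such a map in $\mcC_1$; (b) the family $\{\widetilde N^T\}$ is natural in $G/H\in\cOrb_G$, from the naturality of $\lambda$ in the orbit variable together with the compatibility of the norms of $M$ with restriction; and (c) when the $\mcC_i$ are symmetric monoidal Mackey functors, the norms compose, $\widetilde N_H^G\circ\widetilde N_K^H\cong\widetilde N_K^G$, by pasting the coherence isomorphism of the $\mSet^{Iso}$-module structure (axiom~(2)) against the corresponding composition of norms in $\mcC_1$.

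On morphisms there is nothing essentially new: a map $f\colon M\to M'$ of $G$-commutative monoids in $\mcC_1$ is sent to $F(f)$, which commutes with the $\widetilde N^T$ by naturality of $\lambda_T$ in the $\mcC_1(G/H)$-variable applied to $f$ together with the fact that $f$ already commutes with the norms of $M$ and $M'$; and $F(\id)=\id$, $F(g\circ f)=F(g)\circ F(f)$ because $F$ is a functor, so $M\mapsto F(M)$, $f\mapsto F(f)$ is the desired functor $\GComm(\mcC_1)\to\GComm(\mcC_2)$. The one genuinely delicate point is (a)--(c): because everything in this paper is pseudofunctorial, these are coherence checks, verifying that the $2$-cells $\lambda_T$ paste compatibly with the coherence isomorphisms built into $\Box$ and into $F$. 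The cleanest way to organize this, and the way I would ultimately write it, is to upgrade the observation in the first paragraph to the statement that a lax $G$-monoidal functor is precisely a lax morphism of $\mSet^{Iso}$-modules compatible with the extension along $\mSet^{Iso}\hookrightarrow\mSet$; granting that, a $G$-commutative monoid is an extension $\mSet\to\mcC_1$, post-composition with $F$ (mediated by the $\lambda_T$) is an extension $\mSet\to\mcC_2$, and the evident $2$-functoriality of this construction handles the morphisms, subsuming all of (a)--(c) into the coherence of that module-map structure.
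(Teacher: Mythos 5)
Your argument is correct and is exactly the ``standard argument for ordinary commutative monoids'' that the paper invokes without expansion: extracting the underlying lax monoidal structure from $\lambda_{\ast\amalg\ast}$ and $\lambda_{\emptyset}$ via axiom~(1), building the norms as $F(N^{T})\circ\lambda_{T}$, and verifying naturality, the Mackey-functor compatibility, and functoriality on morphisms by pasting naturality squares for $\lambda$ against $F$ applied to the commuting diagrams in $\mcC_1$. Your closing reformulation --- a $G$-commutative monoid as an extension $\mSet\to\mcC_1$, transported by post-composition with $F$ and corrected along the $\lambda_{T}$ --- is precisely the packaging under which the proposition becomes ``immediate'' as the paper asserts.
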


While a useful language, this is too rigid to describe naturally occurring circumstances. For instance, not every symmetric monoidal subcategory of a $G$-symmetric monoidal category is a genuine $G$-symmetric monoidal category, and this is an essential feature for our localization results in Section~\ref{sec:Localization}. Similarly, categories built out of a $G$-category (like categories of modules over a ring) need not be genuine $G$-symmetric monodal categories. The categories of modules described in Section~\ref{sec:Modules} provides an example of this.

\section{$\cO$-commutative monoids}
In many naturally arising situations, we do not see all possible norm maps. We therefor want a more general notion of a $G$-symmetric monoidal category that will accommodate these situations. The essential feature of $\mSet$ that we used for the genuine $G$-commutative monoids was that it is a symmetric monoidal Green functor. In other words, it is a kind of monoid in symmetric monoidal coefficient systems, and it is closed under self-induction\footnote{This is also a kind of monodial product, where we use that $\mSet$ is the prototypical $G$-symmetric monoidal category.}.

We generalize the previous definitions using this.

\begin{definition}
An {\defemph{indexing system}} $\cO$ is a full, symmetric monoidal sub coefficient system of $\mSet$ which
\begin{enumerate}
\item contains the $G$-set $G/G$, 
\item is closed under finite limits, and
\item is closed under self-induction: if $H/K\in\cO({G/H})$ and $T \in\cO(G/K)$, then $H\times_{K}T \in\cO(G/H)$.
\end{enumerate}

An element of $\cO(G/H)$ is called an {\defemph{admissible set}} for $\cO$.
\end{definition}

\begin{example}
The constant symmetric monoidal coefficient system $\mSet^{tr}$ of sets with trivial action provides an immediate example of an indexing system. We call this one $\cO^{tr}$, the trivial coefficient system.
\end{example}

\begin{remark}
Since any indexing system $\cO$ is closed under disjoint unions and contains $G/G$, we see immediately that any indexing system contains $\cO^{tr}$.
\end{remark}

\begin{definition}
An $\cO$-symmetric monoidal category is a symmetric monoidal coefficient system $\mcC$ together with an extension of the canonical exponential map 
\[
\cO^{tr,Iso}\times\mcC\to\mcC
\]
to a bilinear map
\[
-\Box -\colon \cO^{Iso}\times\mcC\to\mcC
\]
satisfying the same associativity conditions as before.

An $\cO$-symmetric monoidal Mackey functor is a symmetric monoidal coefficient system $\mcC$ together with symmetric monoidal maps
\[
\norm_{K}^{H}\colon\mcC(G/K)\to\mcC(G/H)
\]
for every $H/K\in\cO_{G/H}$ which satisfy the double coset formula up to natural isomorphism.
\end{definition}

Of course, if $\cO=\mSet$, then we recover the earlier definition of a genuine $G$-symmetric monoidal category. By analogy with equivariant spectra indexed on a trivial universe, we call an $\cO^{tr}$-symmetric monoidal category a ``naive $G$-symmetric monoidal category''. This is no condition at all, as the only maps are given by canonical exponentiation. 

\begin{proposition}
If $\mcC$ is a symmetric monoidal coefficient system, then $\mcC$ is canonically an $\cO^{tr}$-symmetric monoidal category and an $\cO^{tr}$-Mackey functor.
\end{proposition}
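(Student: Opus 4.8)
The plan is to observe that for $\cO=\cO^{tr}$ both pieces of structure are essentially vacuous data, so that the only real content is compatibility with the restriction maps of $\mcC$, which is precisely the assertion that those restriction functors are strong monoidal. First, for the $\cO^{tr}$-symmetric monoidal structure: by definition one must produce a bilinear map $-\Box-\colon \cO^{tr,Iso}\times\mcC\to\mcC$ extending the canonical exponential map on $\cO^{tr,Iso}\times\mcC$ — but since $\cO=\cO^{tr}$ these are literally the same data. So I would simply take $-\Box-$ at each level $G/H$ to be the canonical exponential map $\Set^{Iso}\times\mcC(G/H)\to\mcC(G/H)$, $(T,M)\mapsto\bigotimes_{T}M$, of the symmetric monoidal category $\mcC(G/H)$, whose existence and (strong) bilinearity were recorded at the beginning of Section~3.

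Next I would check that these levelwise functors assemble into a map of symmetric monoidal coefficient systems. Since $\cO^{tr}$ is the constant coefficient system, its restriction maps are identities, so the only thing to verify is that for each map $G/K\to G/H$ in $\cOrb_{G}$, with restriction functor $r\colon\mcC(G/H)\to\mcC(G/K)$, there is a natural isomorphism $r(T\Box M)\cong T\Box r(M)$. But $r$ is a strong symmetric monoidal functor, so iterating its structure isomorphisms over the finite set $T$ yields $r(\bigotimes_{T}M)\cong\bigotimes_{T}r(M)$; naturality in $M$ and in isomorphisms of $T$, together with the coherence for the associativity square (which at level $G/H$ is just the standard iterated-product isomorphism $\bigotimes_{S\times T}M\cong\bigotimes_{S}\bigotimes_{T}M$ in a symmetric monoidal category), are all inherited from the coherences of $r$ and of the $\mcC(G/H)$. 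Because we work throughout with pseudofunctors and diagrams that commute only up to unspecified isomorphism, there is no further coherence datum to track. Condition (1) in the definition of a $G$-symmetric monoidal category is automatic here, since $\cO^{tr,Iso}$ is already the entire source.

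Finally, for the $\cO^{tr}$-Mackey functor structure, I would note that an orbit $H/K$ lies in $\cO^{tr}(G/H)$ only if the $H$-set $H/K$ carries the trivial $H$-action, which forces $K=H$. Hence the only norm maps one is obliged to supply are the $\norm_{H}^{H}\colon\mcC(G/H)\to\mcC(G/H)$, which we take to be the identities, and the double coset formula is then trivially satisfied. I expect essentially no obstacle: the proposition is a consistency check. The only mild care needed is in unwinding what "bilinear map of symmetric monoidal coefficient systems" amounts to and in confirming that \emph{strong} (rather than strict) monoidality of the restriction functors suffices — which it does, precisely because every diagram in the framework is only required to commute up to isomorphism.
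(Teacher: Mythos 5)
Your proof is correct and takes essentially the same approach as the paper, which treats the proposition as immediate (the remark just before it says precisely that ``the only maps are given by canonical exponentiation''). Your unwinding — canonical exponentiation levelwise, compatibility via strong monoidality of the restriction functors in the pseudofunctor setting, and the observation that $H/K\in\cO^{tr}(G/H)$ forces $K=H$ so the only norms are identities — is exactly the content the paper leaves implicit.
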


\begin{remark}
It is not immediately clear from the definition that we need only define $\norm_K^H$ for admissible $H/K\in\cO(G/H)$. However, closure under subobjects and under disjoint union shows that defining the norms for the orbits gives the norms for arbitrary admissible sets. The second and third conditions on an indexing system then shows that restricting norms defined for admissibles only involves other admissibles, and similarly inducing them up.
\end{remark}

\begin{definition}
Let $\cO$ be an indexing system and let $\mcC_1$ and $\mcC_2$ be two $\cO$-symmetric monoidal categories. A functor $F\colon\mcC_1\to\mcC_2$ is a strong $\cO$-monoidal functor if for every $G/H\in\cOrb_G$ and for every $T\in\cO(G/H)$, we have a natural isomorphism
\[
T\Box F(-)\To F(T\Box -).
\]
\end{definition}


The notion of an $\cO$-symmetric monoidal structure is actually a weakening of the condition above for a genuine $G$-symmetric monoidal structure. The collection of all indexing coefficient systems forms a poset, ordered by inclusion. If $\mcC$ is a genuine $G$-symmetric monoidal category, then for any $\cO$, there is a canonical $\cO$-symmetric monoidal structure given by restriction of structure. Moreover, if $\cO'\subset\cO$, then an $\cO$-symmetric monoidal category is canonically an $\cO'$-symmetric monoidal category.

\begin{definition}
If $\cO$ and $\cO'$ are indexing coefficient systems with $\cO'\subset \cO$, and if $\mcC$ is an $\cO$-symmetric monoidal category, then an $\cO'$-commutative monoid is an object $M\in\mcC(G/G)$ and an extension
\[
\xymatrix{
{\cO'^{Iso}}\ar[r]\ar[d] & {\cO^{Iso}}\ar[r]^{-\Box M} & {\mcC.} \\
{\cO'}\ar@{-->}[urr]_{N^{(-)}(M)}}
\]
\end{definition}

In other words, we have ``norm maps'' only for the objects of $\cO'$. Since $\cO'$ is closed under subobjects, we have a collection of pairs of subgroups $K\subset H$ such that $H/K\subset \cO'$ and any $T \in\cO'$ can be expressed as a disjoint union of copies of various $H/K$. These parameterize the norms that an $\cO'$-commutative monoid has. The remaining axioms for an indexing coefficient system guarantee that
\begin{enumerate}
\item $M$ is a commutative monoid (in a traditional sense) and
\item norm maps are commutative monoid maps and compose.
\end{enumerate}

\section{Mackey Functors, Tambara Functors, and the transfer}

Much of this formalism arose from our attempt to understand the transfer. One approach in the finite group case is via the Wirtm\"{u}ller map for any $H$-spectrum $E$:
\[
G_{+}\wedge_{H}E \to F_{H}(G_{+},E)
\]
This is a natural map arising from the the fact that $G_{+}\wedge_{H} -$ is left adjoint to the forgetful functor from $G$-spectra to $H$-spectra, while $F_{H}(G_{+},-)$ is the right adjoint. The flavors of equivariant spectra reflect when this map is a weak equivalence (and for finite groups, we can reverse all of the implications): in ``naive'' spectra (traditionally those indexed on a trivial $G$-universe), this map is only an equivalence for $H=G$, while in ``genuine'' spectra (those indexed on a complete universe), this map is always an equivalence \cite{Wirth}. In between these two extremes are models of spectra (some indexed on incomplete universes, and some which are more general) for which some of these maps are equivalences for some choices of $H$.

The Wirthm\"{u}ller map being an equivalence has several amazing and mysterious consequences. For our purposes, the most important consequence is the existence of the transfer. The orbits $G/H$ become Spanier-Whitehead self-dual, and we therefore have a canonical map $G/G_{+}\to G/H_{+}$ for any subgroup $H$ dual to the canonical quotient map. Taking homotopy classes of maps out of this map produces the ``transfer'' map linking the $H$-fixed points and the $G$-fixed points for any $G$-equivariant spectrum, and morally, we should continue to think of the transfer as ``summing over the Weyl group''.

The source of the transfer has been a perpetual source of confusion, and the language of an $\cO$-commutative monoid can be used to describe where transfers arise in the corresponding infinite loop space. This is the approach taken in the study of $\Ninfty$ operads and the algebras over them in spaces and spectra \cite{BHNinfty}. Rather than describe this in full detail, we provide warm-up algebraic examples. 

We first provide a way to transition from coefficient systems (unstable data) to Mackey functors (stable data). In particular, underlying this algebraic discussion is a way to understand computationally what the maps $G/H\to G/G$ do in Mackey functor valued Bredon homology, allowing for simpler applications of equivariant cellular homology. We then discuss the ``multiplicative'' version in Mackey functors, talking about Green and Tambara functors.

\subsection{Mackey Functors}\label{sec:Coeff}
While considering equivariant homotopy theory over an incomplete universe, Lewis described intermediaries between coefficient systems and Mackey functors which contain only some of the transfers \cite[Def 1.2]{LewisMackey}. The structure and its compatibility is most easily described by an appropriate $\cO$-commutative monoid structure.

\begin{definition}
Let $\mCoeff$ denote the symmetric monoidal coefficient system assigning to $G/H$ the category of abelian group valued coefficient systems on $H$ with symmetric monoidal product given by direct sum.
\end{definition}

An object of $\mCoeff(G/H)$ is a contravariant symmetric monoidal functor
\[
\mM\colon \Set^{H}\to\Ab,
\]
and so restriction amounts to precomposition with the induction map $\Set^{H}\to\Set^{G}$:
\[
Res_{H}^{G}(\underline{M})(T )=\underline{M}(G\times_{H}T )
\]
for any $H$-set $T $. This functor has both a left adjoint and a right adjoint, denoted $Ind_{H}^{G}$ and $CoInd_{H}^{G}$ respectively.

\begin{warning}
The reader familiar with equivairant homotopy theory and with Mackey functors in general will no doubt think that the two functors $Ind_{H}^{G}$ and $CoInd_{H}^{G}$ are naturally isomorphic. While this is true for Mackey functors, this is false for coefficient systems! In particular, the usual colimit formulas for a left adjoint show that
\[
Ind_{H}^{G}(M)(G/G)=0,
\]
while
\[
CoInd_{H}^{G}(M)(G/G)=M(H/H).
\]
\end{warning}

Coinduction has a very simple formulation. Let $i_{H}^{\ast}$ denote the restriction functor from $\Set^{G}\to\Set^{H}$.

\begin{proposition}\label{prop:Coinduction}
If $\mM$ is a coefficient system on $H$, then
\[
CoInd_{H}^{G}(\mM)(T)=\mM(i_{H}^{\ast}T)
\]
for any $G$-set $T $.
\end{proposition}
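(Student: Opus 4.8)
The plan is to identify $CoInd_{H}^{G}$ as precomposition with the set-level restriction functor $i_{H}^{\ast}$, using only the uniqueness of adjoints together with the standard fact that an adjunction induces an adjunction on functor categories by whiskering. Since, as recalled just before the statement, $Res_{H}^{G}\colon\mCoeff(G/G)\to\mCoeff(G/H)$ is precomposition of (contravariant) functors with the induction functor $G\times_{H}-\colon\Set^{H}\to\Set^{G}$, the task is to exhibit its right adjoint.

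First I would record the set-level adjunction $G\times_{H}-\dashv i_{H}^{\ast}$ between $\Set^{H}$ and $\Set^{G}$. Whiskering an adjunction $L\dashv R$ produces an adjunction $(-\circ R)\dashv(-\circ L)$ on the associated functor categories, in either variance; equivalently, precomposition along a functor that admits a left adjoint after passage to opposite categories has right adjoint computed by precomposition with that left adjoint, because the comma categories controlling the pointwise right Kan extension acquire initial objects from the unit. Applying this to $G\times_{H}-\dashv i_{H}^{\ast}$ and to contravariant functors into $\Ab$ shows that $Res_{H}^{G}$, precomposition with $G\times_{H}-$, is left adjoint to precomposition with $i_{H}^{\ast}$. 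By uniqueness of right adjoints, $CoInd_{H}^{G}(\mM)=\mM\circ i_{H}^{\ast}$, i.e.\ $CoInd_{H}^{G}(\mM)(T)=\mM(i_{H}^{\ast}T)$ for every finite $G$-set $T$. Two small checks remain: that $\mM\circ i_{H}^{\ast}$ is again an object of $\mCoeff(G/G)$, which is immediate because $i_{H}^{\ast}$ is strong symmetric monoidal for disjoint union so its composite with the strong monoidal $\mM$ is strong monoidal; and that the adjunction restricts to the full subcategories of coefficient systems, which is automatic since both $Res_{H}^{G}$ and $-\circ i_{H}^{\ast}$ land among coefficient systems.

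If one prefers the comparison isomorphism made explicit rather than quoted, a map $Res_{H}^{G}\mN\to\mM$ of coefficient systems on $H$ corresponds to the map $\mN\to\mM\circ i_{H}^{\ast}$ whose value at a finite $G$-set $T$ is the composite $\mN(T)\to\mN(G\times_{H}i_{H}^{\ast}T)\to\mM(i_{H}^{\ast}T)$, the first arrow being $\mN$ applied to the counit $G\times_{H}i_{H}^{\ast}T\to T$ and the second the given map; the triangle identities for $G\times_{H}-\dashv i_{H}^{\ast}$ make this assignment a bijection, natural in $\mN$ and $\mM$. I expect the only real friction to be variance bookkeeping: one must keep the opposite categories straight so that the right adjoint of restriction comes out as \emph{co}induction rather than induction. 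A useful consistency check is that the parallel computation for $Ind_{H}^{G}$ does \emph{not} collapse to a precomposition, since $G\times_{H}-$ has no left adjoint (it does not preserve the terminal object) --- exactly as the Warning above predicts.
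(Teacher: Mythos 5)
Your argument is correct and is essentially the argument sketched in the paper: the set-level adjunction $G\times_{H}-\dashv i_{H}^{\ast}$ is whiskered to functor categories (contravariant functors into $\Ab$), so that precomposition with $i_{H}^{\ast}$ is right adjoint to precomposition with $G\times_{H}-$, i.e.\ to $Res_{H}^{G}$, and uniqueness of adjoints identifies $CoInd_{H}^{G}$ with $\mM\mapsto\mM\circ i_{H}^{\ast}$. The paper compresses this to two sentences; your added checks (that the composite stays strong symmetric monoidal and that the analogous reduction fails for $Ind_{H}^{G}$ because $G\times_{H}-$ does not preserve the terminal object) are sound and consistent with the paper's Warning.
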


This is because the functor $i_{H}^{\ast}$ is the right-adjoint to induction. Thus the induced action on coefficient systems of $i_{H}^{\ast}$ is right-adjoint to the induced map for induction, which, confusingly enough, is $Res_{H}^{G}$.

Coinduction endows $\mCoeff$ with a symmetric monoidal Mackey functor structure, and therefore a genuine $G$-symmetric monoidal category structure:
\begin{definition}\label{def:CoeffNorms}
Let $N_{H}^{G}\colon \mCoeff(G/H)\to\mCoeff(G)$ be  $CoInd_{H}^{G}$.
\end{definition}

Combining this definition with the previous proposition immediately gives the following description of the $\Box$ operation on $\mCoeff$. With this formulation, the verification of the axioms for a $G$-symmetric monoidal category follows immediately from the corresponding statements in $\mSet$.

\begin{proposition}\label{prop:CoeffBox}
The corresponding $\Box$ operation to this $G$-symmetric monoidal structure is given by
\[
T \Box\mM=\mM(T \times -).
\]
\end{proposition}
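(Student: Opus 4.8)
The plan is to reduce to the case where $T$ is a single orbit, unwind the definitions of the norm and restriction maps on $\mCoeff$, and recognize the resulting identity as the classical ``shearing'' isomorphism of $G$-sets.

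First I would record that both sides of the claimed formula are additive in $T$, so that it suffices to treat $T=H/L$ with $L\subset H$. On the right, $\mM\colon\Set^{H}\to\Ab$ is strong symmetric monoidal, hence carries the coproduct $\amalg$ of $H$-sets to the biproduct $\oplus$; since $(T_{1}\amalg T_{2})\times S\cong(T_{1}\times S)\amalg(T_{2}\times S)$, the assignment $\mM(-\times-)$ sends disjoint unions in the first variable to direct sums. On the left, the $\Box$-operation associated to a symmetric monoidal Mackey functor is built orbit-by-orbit and is additive over disjoint unions, the sum being formed with the symmetric monoidal product of $\mCoeff(G/H)$, which is levelwise direct sum. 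So it is enough to identify $H/L\Box\mM$ with $\mM\bigl((H/L)\times-\bigr)$ as coefficient systems on $H$.

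Next I would substitute the definitions. Applying the description of $\Box$ for a symmetric monoidal Mackey functor ($K/L\Box M=Tr_{L}^{K}i_{L}^{\ast}M$) inside the group $H$ gives $H/L\Box\mM=Tr_{L}^{H}i_{L}^{\ast}\mM$, where on $\mCoeff$ the transfer $Tr_{L}^{H}$ is $CoInd_{L}^{H}$ (Definition~\ref{def:CoeffNorms}, in its relative form) and $i_{L}^{\ast}$ is the restriction $Res_{L}^{H}\colon\mCoeff(G/H)\to\mCoeff(G/L)$, i.e.\ precomposition with set-level induction $H\times_{L}(-)$. Evaluating on an arbitrary $H$-set $S$ and invoking Proposition~\ref{prop:Coinduction} (with $H$ in place of $G$ and $L$ in place of $H$),
\[
(H/L\Box\mM)(S)=CoInd_{L}^{H}\!\bigl(Res_{L}^{H}\mM\bigr)(S)=\bigl(Res_{L}^{H}\mM\bigr)(i_{L}^{\ast}S)=\mM\bigl(H\times_{L}i_{L}^{\ast}S\bigr),
\]
where $i_{L}^{\ast}S$ now denotes the underlying $L$-set of $S$. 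The \emph{shearing isomorphism} of $H$-sets $H\times_{L}i_{L}^{\ast}S\xrightarrow{\ \cong\ }(H/L)\times S$, $[h,s]\mapsto(hL,hs)$, then identifies this with $\mM\bigl((H/L)\times S\bigr)$, naturally in $S$ and in $\mM$. Reassembling over the orbits of a general $T$ via the additivity above yields $T\Box\mM=\mM(T\times-)$.

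The computation itself is short; the only real obstacle is bookkeeping, namely keeping straight the two distinct ``restrictions'' in play: the restriction $i_{L}^{\ast}$ of $H$-sets to $L$-sets that appears inside the coinduction formula of Proposition~\ref{prop:Coinduction}, versus the restriction $Res_{L}^{H}$ on the coefficient system $\mCoeff$, which is precomposition with set-level induction $H\times_{L}(-)$ (from the formula $Res_{H}^{G}(\mM)(T)=\mM(G\times_{H}T)$). Conflating these collapses the argument. One should also confirm that the levelwise isomorphisms assemble into an isomorphism of symmetric monoidal coefficient systems compatible with the bilinear $\Box$-structure; but since every operation involved ($\times$, $i_{L}^{\ast}$, $H\times_{L}(-)$) lives at the level of $G$-sets and is simply fed into $\mM(-)$, this coherence is inherited from the corresponding statements in $\mSet$, exactly as the discussion following Definition~\ref{def:CoeffNorms} indicates.
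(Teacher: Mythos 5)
Your proof is correct and follows exactly the route the paper intends: the paper gives no explicit argument, simply asserting that the formula follows ``immediately'' from combining Definition~\ref{def:CoeffNorms} ($N_{H}^{G}=CoInd_{H}^{G}$) with Proposition~\ref{prop:Coinduction} ($CoInd_{H}^{G}(\mM)(T)=\mM(i_{H}^{\ast}T)$). You have filled in precisely the intended verification—reduction to orbits by additivity, the identification $Tr_{L}^{H}i_{L}^{\ast}\mM=CoInd_{L}^{H}Res_{L}^{H}\mM$, and the shearing isomorphism $H\times_{L}i_{L}^{\ast}S\cong(H/L)\times S$—while carefully distinguishing the restriction of coefficient systems (precomposition with set-level induction) from the restriction of sets.
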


\begin{theorem}\label{thm:MackeyFunctors}
If $\mM$ is a $G$-commutative monoid in $\mCoeff$ with the coinduction $G$-symmetric monoidal structure, then $\mM$ is a Mackey functor with transfer maps given by evaluating the map
\[
\norm_{K}^{H}\colon H/K\Box i_{H}^{\ast}\mM\to i_{H}^{\ast}\mM
\]
on $H/H$.

Conversely, if $\mM$ is a Mackey functor, then $\mM$ is naturally a $G$-commutative monoid.
\end{theorem}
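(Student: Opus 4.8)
The plan is to match the data and axioms of a $G$-commutative monoid in $\mCoeff$ against those of a Mackey functor, using the identification $T\Box\mM=\mM(T\times -)$ of Proposition~\ref{prop:CoeffBox} throughout. For the forward direction, observe first that a $G$-commutative monoid structure on $\mM$ is an extension $N^{(-)}(\mM)\colon\mSet\to\mCoeff$ of the canonical exponential, hence assigns to each morphism of finite $G$-sets $f\colon T\to S$ a morphism of coefficient systems $N^{f}\colon\mM(T\times -)\To\mM(S\times -)$, functorially in $f$, additively with respect to $\amalg$ (as $N^{(-)}(\mM)$ is a functor of \emph{symmetric monoidal} coefficient systems), and compatibly with the restriction functors $i_{H}^{\ast}$ (as it is a map of coefficient systems over $G$). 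Following the statement, set $\tr_{K}^{H}\colon\mM(G/K)\to\mM(G/H)$ to be $\norm_{K}^{H}\colon H/K\Box i_{H}^{\ast}\mM\to i_{H}^{\ast}\mM$ evaluated at $H/H$, using the identifications $i_{H}^{\ast}\mM(H/K\times H/H)=\mM(G/K)$ and $i_{H}^{\ast}\mM(H/H)=\mM(G/H)$. Now $\mM$ is already an additive contravariant coefficient system and the $\tr_{K}^{H}$ are additive, from $\mM\in\mCoeff(G/G)$ and additivity of $N^{(-)}$; the transfers compose, $\tr_{H}^{L}\circ\tr_{K}^{H}=\tr_{K}^{L}$, which is the norm-composition triangle from the definition of a $G$-commutative monoid, restricted to the relevant subgroups and evaluated at the terminal object; and the double coset formula for $i_{H}^{\ast}\circ\tr_{K}^{G}$ comes from restricting the norm map $\norm_{K}^{G}$ along $G/H\to G/G$, using coefficient-system naturality of $N^{(-)}(\mM)$ to identify the restriction with $N^{i_{H}^{\ast}(G/K)\to H/H}(i_{H}^{\ast}\mM)$, decomposing $i_{H}^{\ast}(G/K)\cong\coprod H/(H\cap{}^{g}K)$ via the set-level double coset formula, and reading off the summands by additivity of $N^{(-)}$ as transfers precomposed with conjugation and restriction maps.

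For the converse, given a Mackey functor $\mM$ I would construct the extension directly. Every morphism of finite $G$-sets factors, over the orbit decomposition of its target and then componentwise, as an isomorphism, then a coproduct of quotient maps $G/K\to G/H$ with $K\subseteq H$, then an isomorphism; on isomorphisms $N^{(-)}$ is forced to be the coefficient-system action of $\mM$, so it suffices to define $N^{q}$ for a single quotient $q\colon G/K\to G/H$. I would take its value at a $G$-set $U$ to be the transfer of the Mackey functor $\mM$ along the $G$-map $q\times\id_{U}\colon G/K\times U\to G/H\times U$ (a Mackey functor has transfers along arbitrary maps of finite $G$-sets, by decomposing into orbits and composing the $\tr_{K}^{H}$ with conjugation isomorphisms). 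Then I would check, in order: naturality of $N^{q}$ in $U$, which is base change for $\mM$ along the pullback squares $G/K\times U=(G/K\times U')\times_{G/H\times U'}(G/H\times U)$, a form of the double coset formula; functoriality in $q$ and additivity, which are composition and additivity of transfers; compatibility with the $i_{H}^{\ast}$, which is compatibility of transfer with restriction; that the restriction of $N^{(-)}$ to trivial $G$-sets is the canonical exponential, since maps of trivial sets yield only biproduct structure maps and these match the additive structure of $\mM$; and finally the associativity square and the norm-composition triangle, again from composition of transfers. The two constructions are visibly mutually inverse and natural in $\mM$, so in fact they exhibit an equivalence between Mackey functors and $G$-commutative monoids in $\mCoeff$.

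The one genuinely substantive point is the double coset formula: extracting it in the forward direction from coefficient-system naturality together with the orbit decomposition of the restricted $G$-set, and, in the converse, verifying that the hand-built $N^{q}$ is natural in the evaluation variable. No single step is deep, but keeping the conjugation isomorphisms and the indexing over double cosets consistent on both sides is where the real work lies; everything else is the ordinary commutative-monoid bookkeeping carried out over the orbit category.
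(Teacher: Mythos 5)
Your forward direction is essentially the paper's argument: both extract the double coset formula from the fact that $N^{(-)}(\mM)$ is a map of symmetric monoidal \emph{coefficient systems} (so commutes with restriction), and both then decompose $i_{K}^{\ast}(G/H)$ into orbits and read off the formula from additivity. The converse is where you take a genuinely different route. The paper's converse is a two-line observation: for Mackey functors (unlike coefficient systems), $CoInd_{H}^{G}$ is naturally isomorphic to $Ind_{H}^{G}$, so the norm maps $CoInd_{H}^{G}\,i_{H}^{\ast}\mM \to \mM$ are simply the counits of the $Ind \dashv Res$ adjunction, and the extension over $\mSet$ comes for free. You instead build $N^{(-)}$ by hand, defining $N^{q}$ at $U$ to be the transfer of $\mM$ along $q\times\id_{U}$, and then verify each axiom --- naturality in $U$ via base change along the pullback square $G/K\times U \cong (G/K\times U')\times_{G/H\times U'}(G/H\times U)$, functoriality and additivity from composition and additivity of transfers, and so on. Your construction is correct and has the virtue of being entirely explicit (and of making visible \emph{why} the extension exists: transfer along $q\times\id_{U}$ literally is the coefficient-system map after applying $\mM$), but it costs several verifications that the paper's adjunction argument collapses into one step. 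The trade is transparency versus economy; both deliver the theorem, and both (implicitly in the paper, explicitly in your last sentence) in fact give an equivalence of categories rather than a mere bijection on objects.
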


\begin{proof}
A unital, linear map
\[
\mM\oplus\mM\to\mM
\]
is necessarily the addition map, so the underlying commutative monoid is just $\mM$ again. The only condition we therefore need to check is the double coset formula relating the restriction of the transfer and the transfers of the restriction. We describe this only for the case of $\Tr_{H}^{G}$; the other cases are immediate by symbol replacement.

A $G$-commutative monoid is a functorial extension of $T \mapsto T \Box M$ over all of $\mSet$. Thus we have for any subgroups $K\subset H\subset G$ a commutative square:
\[
\xymatrix{
{G/H\Box \mM}\ar[rr]^{Tr_{H}^{G}}\ar[d]_{i_{K}^{\ast}\Box i_{K}^{\ast}} & & {\mM}\ar[d]^{i_{K}^{\ast}} \\
{i_{K}^{\ast} G/H \Box i_{K}^{\ast}\mM}\ar[rr]_{Res_{K}^{G} Tr_{H}^{G}} & & {i_{K}^{\ast}\mM.}
}
\]
This is the heart of the double coset formula. The restriction to $K$ to $G/H$ comes from a double coset decomposition of $G/H$:
\[
i_{K}^{\ast}G/H=\coprod_{x\in K\backslash G/H} K/K_{x},
\]
where $K_{x}$ is the stabilizer of $xH$. By construction, we then have a natural isomorphism
\[
i_{K}^{\ast} G/H \Box i_{K}^{\ast}\mM\cong \bigoplus_{x\in K\backslash G/H} K/K_{x}\Box i_{K}^{\ast}\mM.
\]
The map from this to $i_{K}^{\ast}\mM$ is the sum of all of the individual maps
\[
K/K_{x}\Box i_{K}^{\ast}\mM\to\mM,
\]
and thus we conclude the formula
\[
Res_{K}^{G} Tr_{H}^{G}=\sum_{x\in K\backslash G/H} Tr_{K_{x}}^{K},
\]
and evaluating both sides on $K/K$ gives the standard formula.

For the other implication, if $\mM$ is a Mackey functor, then since $CoInd$ for Mackey functors agrees with $CoInd$ of coefficient systems and is naturally the isomorphic to $Ind$ on Mackey functors, we have canonical maps
\[
CoInd_{H}^{G}\mM\to\mM.
\]
These provide the necessary norm maps.
\end{proof}

Thus in this additive, purely algebraic context, the ``norm'' maps are really the transfers.

\begin{remark}
If we worked instead with $\Set$-valued coefficient systems, rather than abelian group valued ones, then Theorem~\ref{thm:MackeyFunctors} only changes slightly. The $G$-commutative monoids are semigroup valued Mackey functors, rather than group valued ones.
\end{remark}

We can now provide a home for Lewis' generalization of Mackey functors. Let $U$ be a universe for $G$, and let $\cO_{D}(U)$ be the indexing coefficient system generated by those $H$-sets which $H$-embed into $U$ (the $D$ subscript indicates that this is coming from an underlying ``little discs'' operad).

\begin{theorem}[\cite{LewisMackey}]
Let $U$ be a universe, and let $\Sp^{G}(U)$ denote the category of equivariant spectra indexed on $U$. Then for any $T \in\Sp^{G}(U)$, the homotopy groups of $T $ are naturally a $\cO_{D}(U)$-commutative monoid.
\end{theorem}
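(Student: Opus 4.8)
The plan is to show that, for each integer $n$, the coefficient system $\underline{\pi}_n(T)$ sending $G/H$ to $\pi_n^H(T)=[S^n\wedge(G/H)_+,\,T]^G$, equipped with its direct-sum symmetric monoidal structure, is an $\cO_D(U)$-commutative monoid in the genuine $G$-symmetric monoidal category $\mCoeff$ of Definition~\ref{def:CoeffNorms}, naturally in $T$. By the argument in the proof of Theorem~\ref{thm:MackeyFunctors}, a unital linear self-map of $\underline{\pi}_n(T)$ is forced to be addition, so the only content of an $\cO_D(U)$-commutative monoid structure is a family of norm maps $\norm_K^H\colon H/K\Box i_H^\ast\underline{\pi}_n(T)\to i_H^\ast\underline{\pi}_n(T)$ indexed by the admissible orbits $H/K\in\cO_D(U)(G/H)$, subject to the unit, composition, and double-coset axioms; by the Remark following the definition of an $\cO$-symmetric monoidal Mackey functor it suffices to build these for orbits and check the axioms there. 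Using Proposition~\ref{prop:Coinduction} together with Frobenius reciprocity one identifies $H/K\Box i_H^\ast\underline{\pi}_n(T)$ with $CoInd_K^H i_K^\ast\underline{\pi}_n(T)$, whose value at $H/J$ is $\bigoplus_{x\in K\backslash H/J}\pi_n^{K\cap xJx^{-1}}(T)$; thus $\norm_K^H$ is, concretely, a natural family of transfer homomorphisms from the $\pi_n^{K\cap xJx^{-1}}(T)$ into $\pi_n^J(T)$, specializing on $H/H$ to the basic transfer $\pi_n^K(T)\to\pi_n^H(T)$.

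The transfers are produced by Pontryagin--Thom collapse, and this is the substantive part of the argument — it amounts to the classical construction of equivariant transfers. An orbit $H/K$ is admissible exactly when it $H$-embeds into $i_H^\ast U$; such an embedding factors through $i_H^\ast V$ for some $G$-representation $V\subset U$, hence gives a $G$-embedding of the covering $G/K\to G/H$ into the associated $V$-bundle over $G/H$. Collapsing onto a tubular neighbourhood of the image and applying the shearing isomorphisms identifying the two Thom spaces involved with $(G/H)_+\wedge S^V$ and $(G/K)_+\wedge S^V$ yields, after desuspending by the invertible $S^V$, a stable $G$-map $\Sigma^\infty_U(G/H)_+\to\Sigma^\infty_U(G/K)_+$; applying $[-\wedge S^n,\,T]^G$ gives the transfer $\pi_n^K(T)\to\pi_n^H(T)$. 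Since the collapse maps are defined geometrically over the base, they restrict, so the transfers fit together into the required map of $H$-coefficient systems $CoInd_K^H i_K^\ast\underline{\pi}_n(T)\to i_H^\ast\underline{\pi}_n(T)$, giving $\norm_K^H$.

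It then remains to verify the axioms. That $\Box$ restricts on $\cO^{tr,Iso}$ to the canonical exponential map is immediate from Proposition~\ref{prop:CoeffBox} (for a finite set $S$ with trivial action, $S\Box\underline{\pi}_n(T)\cong\bigoplus_S\underline{\pi}_n(T)$). Composition of norms, i.e.\ that $\norm_K^H\circ\norm_{K'}^K$ agrees with $\norm_{K'}^H$ up to the canonical isomorphism $H/K'\Box-\cong H/K\Box(K/K'\Box-)$, follows from the functoriality of Pontryagin--Thom collapse for the tower $K'\subseteq K\subseteq H$; here closure of $\cO_D(U)$ under self-induction is exactly what makes $H/K'$ admissible, so both sides are defined. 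The double-coset compatibility built into the definition of a $G$-commutative monoid becomes, on evaluation, the classical identity $Res_J^H\,Tr_K^H=\sum_{x\in J\backslash H/K}Tr_{J\cap xKx^{-1}}^{J}$ in $\pi_n^{(-)}(T)$, proved by decomposing the fibre $H/K$ into $J$-orbits and observing that the collapse map restricts orbit-by-orbit to the corresponding sub-collapses; closure of $\cO_D(U)$ under finite limits (hence under subobjects and pullback along orbit maps) ensures every subgroup $J\cap xKx^{-1}\subseteq J$ arising here is again admissible. Granting these, $\underline{\pi}_n(T)$ is an $\cO_D(U)$-commutative monoid; since every ingredient is natural in maps of $U$-indexed $G$-spectra, so is the construction, and the same applies to the whole graded coefficient system $\underline{\pi}_\ast(T)$.

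The main obstacle is precisely the geometric step of the second paragraph — building the equivariant transfers and establishing their composition and double-coset relations. Once that classical input is available, the contribution of the present formalism is organizational: the defining properties of the indexing system $\cO_D(U)$ (containing $G/G$, closure under finite limits, closure under self-induction), together with the automatic closure under disjoint unions, are exactly what is needed to assemble the individual transfers into a single $\cO_D(U)$-commutative monoid and to reduce every verification from arbitrary admissible sets to orbits. The one genuinely new thing to confirm is that the orbits carrying an equivariant transfer coincide with the admissible orbits of $\cO_D(U)$, which holds essentially by definition of $\cO_D(U)$.
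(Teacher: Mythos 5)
The paper does not prove this theorem: it is quoted from Lewis's work \cite{LewisMackey} and no argument is given in the text, so there is nothing in the paper to compare your proof against directly. Your sketch is a plausible reconstruction of the standard argument -- identify the would-be norm maps with equivariant transfers, build those transfers by a Pontryagin--Thom collapse along an embedding of the finite cover $G/K\to G/H$ into a representation in $U$, and then show that the indexing-system axioms for $\cO_D(U)$ (containing $G/G$, closure under finite limits, closure under self-induction) are exactly what is needed to assemble the individual transfers into a single $\cO_D(U)$-commutative monoid in $\mCoeff$, with the double-coset compatibility becoming the classical $Res\circ Tr$ formula.

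Two cautions, though, before you treat this as established. First, the heavy lifting -- existence, compatibility under restriction, composition, and the double-coset formula for the equivariant transfers over an incomplete universe -- is precisely Lewis's contribution, so your ``the substantive part amounts to the classical construction'' is not a proof but a pointer to one: you would need to actually carry through (or cite) the construction in the generality of an arbitrary universe $U$, where the subtlety is exactly that not every orbit admits an embedding. Second, you identify the admissible orbits with those that $H$-embed into $i_H^\ast U$, but the paper defines $\cO_D(U)$ as the indexing system \emph{generated} by such $H$-sets; for a general universe you should check that the $H$-sets embedding into $i_H^\ast U$ already form an indexing system (i.e.\ are closed under finite limits and self-induction), or else note which additional orbits the generation process forces into $\cO_D(U)$ and explain why those carry transfers as well. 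For genuine universes this closure is true, but it is worth flagging rather than assuming. With those gaps filled in (which is what \cite{LewisMackey} does), your outline is the right one.
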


We alluded to a computational advantage to this, and the advantage is a conceptual one. If $T $ is a $G$-set and $Y$ is a [genuine] $G$-spectrum, then we have a natural isomorphism
\[
T \Box\underline{\pi_{k}}(Y)\cong\underline{\pi_{k}}(T \times Y).
\]
We apply this to the cellular filtration for a genuine equivariant spectrum. This produces a long exact sequence computing Bredon homology with
\[
\underline{\pi}_\ast(T _n/T _{n-1}\wedge H\mM)=\underline{\pi}_n(T _n/T _{n-1}\wedge H\mM)=\underline{H}_n(T _n/T _{n-1};\mM)
\]
in degree $n$. The maps in the long exact sequence are induced from the maps in the triple $(T _n,T _{n-1},T _{n-2})$, and these decompose into maps of the form $G/H\to G/K$. The above discussion then shows the effect of these maps on homology: they all arise from the transfer.

\subsection{Tambara Functors}
Mackey functors have an additional symmetric monoidal operation: the $\Box$-product. This is a Day convolution product along the Cartesian product of finite $G$-sets. A unital, commutative monoid for $\Box$ is called a commutative Green functor. If $R$ is an equivariant commutative ring spectrum, then $\underline{\pi}_{0}$ has more structure than simply a commutative ring object in Mackey functors. Brun showed that $\underline{\pi}_{0}(R)$ is naturally a Tambara functor, meaning it has in addition compatible multiplicative transfer maps called norms \cite{Brun}. One of the key ingredients in Hill-Hopkins-Ravenel was a homotopically well-defined definition of a spectrum version of the norm, and we can extend this to Mackey functors.

\begin{definition}\label{defn:MackeyNorms}
Let $N_K^H\colon \Mackey_{K}\to\Mackey_{H}$ denote the composite 
\[
\mM\mapsto H\mM\mapsto N_{K}^{H}H\mM\mapsto \underline{\pi}_{0} N_{K}^{H}H\mM,
\]
where the functor on Eilenberg-MacLane spectra is the norm in spectra.
\end{definition}

\begin{proposition}\label{prop:MackeyNormsOne}
The norm functors $N_K^H$ endow the symmetric monoidal coefficient system of Mackey functors with the structure of a symmetric monoidal Mackey functor.
\end{proposition}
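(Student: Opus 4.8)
The plan is to transport the already-established symmetric monoidal Mackey functor structure on equivariant spectra down to Mackey functors along the Eilenberg--MacLane functor and $\underline{\pi}_0$, using that the Hill--Hopkins--Ravenel norm is well behaved on connectivity. By Example~\ref{ex:GObjects} applied to the symmetric monoidal category of spectra (concretely \cite{HHR}), the assignment $G/H\mapsto(\Sp^H,\wedge)$ together with the spectrum norms $N_K^H$ is a symmetric monoidal Mackey functor: the norms and restrictions are strong symmetric monoidal, and they satisfy the double coset formula up to natural isomorphism. First I would cut down to the full subcategory $\Sp^H_{\geq 0}$ of connective $H$-spectra. Restriction preserves connectivity for trivial reasons, and the norm preserves connectivity by \cite{HHR}, so $G/H\mapsto\Sp^H_{\geq 0}$ is still a symmetric monoidal Mackey functor; under the identification $\Sp^H_{[0,0]}\simeq\Mackey_H$, the Mackey-functor norm of Definition~\ref{defn:MackeyNorms} is then exactly the truncation $\tau_{\leq 0}$ of the spectrum norm applied to Eilenberg--MacLane inputs.

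Second I would isolate the one genuinely non-formal ingredient concerning the $\Box$-product: that $\underline{\pi}_0\colon\Sp^H_{\geq 0}\to\Mackey_H$ is strong symmetric monoidal, carrying $\wedge$ to $\Box$. This reduces to the Eilenberg--MacLane case $\underline{\pi}_0(H\mM\wedge H\mN)\cong\mM\Box\mN$: for connective $E,E'$ the map $E\to H\underline{\pi}_0 E$ is a $\underline{\pi}_0$-isomorphism with $0$-connected fiber, and smashing a $0$-connected spectrum with a connective one keeps it $0$-connected, so $\underline{\pi}_0(E\wedge E')\cong\underline{\pi}_0(H\underline{\pi}_0 E\wedge H\underline{\pi}_0 E')\cong\underline{\pi}_0 E\Box\underline{\pi}_0 E'$. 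Compatibility with the symmetry isomorphisms is part of the standard identification of the smash product of Mackey functors with $\underline{\pi}_0$ of the smash of Eilenberg--MacLane spectra.

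Third I would assemble the structure. The restriction maps of $\Mackey$ are part of the given symmetric monoidal coefficient system, and since restriction of spectra is exact and commutes with $H(-)$ and $\tau_{\leq 0}$, they are intertwined with the Mackey-functor norms exactly as the restrictions of spectra are intertwined with the spectrum norms. For strong symmetric monoidality of $N_K^H$ on $\Mackey$: the canonical $\underline{\pi}_0$-isomorphism $H\mM\wedge H\mN\to H(\mM\Box\mN)$ stays a $\underline{\pi}_0$-isomorphism after applying the spectrum norm (by the sharp connectivity statement below); combining this with $N_K^H(H\mM\wedge H\mN)\cong N_K^H H\mM\wedge N_K^H H\mN$ and then applying $\underline{\pi}_0$ together with Step~2 yields $N_K^H(\mM\Box\mN)\cong N_K^H\mM\Box N_K^H\mN$ naturally, and likewise for units and the symmetry. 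Finally the double coset formula (with composability $N_H^G N_K^H\cong N_K^G$ as its degenerate case) descends: one applies $\tau_{\leq 0}$ to the spectrum-level isomorphism and uses the sharp connectivity statement once more to erase the intermediate truncation $\tau_{\leq 0}N_K^H H(-)$ that appears when Definition~\ref{defn:MackeyNorms} is iterated, since inserting $\tau_{\leq 0}$ before a further norm alters the connective spectrum only by a $\underline{\pi}_0$-isomorphism, hence not at all after the outermost $\underline{\pi}_0$.

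I expect the main obstacle to be precisely this ``sharp connectivity'' input: that the norm $N_K^H$, though neither additive nor exact, carries a $\underline{\pi}_0$-isomorphism of connective spectra to a $\underline{\pi}_0$-isomorphism, equivalently carries a $0$-connected connective spectrum to a $0$-connected spectrum. This is what licenses moving $\tau_{\leq 0}$ past norms and so reduces the whole proposition to the known symmetric monoidal Mackey functor structure on spectra; everything else is bookkeeping with $\Sp^H_{[0,0]}\simeq\Mackey_H$. If a direct citation is not available, I would prove it via the isotropy-separation filtration of the norm: on the various geometric fixed points the norm is built from (twisted) smash powers by the diagonal formula of \cite{HHR}, and smash powers preserve $0$-equivalences of connective spectra, while the homotopy-orbit layers involve smash powers of the $0$-connected fiber and are therefore even more highly connected; an induction on the order of the group then yields the claim.
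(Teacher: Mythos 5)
Your proposal follows essentially the same route as the paper: transport the strong symmetric monoidal structure of the spectrum-level norm through $H(-)$ and $\underline{\pi}_0$, using the identification $\underline{\pi}_0(H\mM\wedge H\mM')\cong\mM\Box\mM'$. The paper's two-line proof leaves implicit the connectivity input you correctly isolate (that $N_K^H$ carries $\underline{\pi}_0$-isomorphisms of connective spectra to $\underline{\pi}_0$-isomorphisms, so one may commute truncation past norms), and your proposal is a careful unpacking of exactly that argument.
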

\begin{proof}
We need only show that the norm maps are strong symmetric monoidal functors. However, this follows immediately from the facts that
\[
\m{\pi}_0 (H\mM\wedge H\mM')\cong \mM\Box\mM',
\]
and that the norm functor on spectra is strong symmetric monoidal.
\end{proof}

By definition, the following result then holds immediately.
\begin{proposition}
The functor $\underline{\pi}_0$ is a strong $G$-monoidal map on $(-1)$-connected spectra.
\end{proposition}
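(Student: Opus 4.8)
The plan is first to unwind what ``strong $G$-monoidal'' requires of $\underline{\pi}_0$ and to reduce to the case of a single orbit. On the source side, the $(-1)$-connected $G$-spectra form a genuine $G$-symmetric monoidal coefficient system (the norm and restriction of Example~\ref{ex:GObjects} both preserve connectivity), with $H/K\Box X = N_K^H i_K^\ast X$ the Hill--Hopkins--Ravenel norm; on the target side, $\Mackey$ carries the genuine $G$-symmetric monoidal structure produced from Proposition~\ref{prop:MackeyNormsOne} together with the categorical Frobenius reciprocity, so that $H/K\Box\mM = N_K^H i_K^\ast\mM$ with $N_K^H$ as in Definition~\ref{defn:MackeyNorms}. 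A strong $G$-monoidal structure on $\underline{\pi}_0$ is then a natural isomorphism $T\Box\underline{\pi}_0(-)\To\underline{\pi}_0(T\Box-)$ for each $G/H\in\cOrb_G$ and $T\in\mSet(G/H)$, compatible with the $\mSet$-module structures. Because $\Box$ is bilinear in $T$, it carries a disjoint union of orbits to a $\wedge$-product (resp. $\Box$-product) of the corresponding norms, and $\underline{\pi}_0(A\wedge B)\cong\underline{\pi}_0 A\Box\underline{\pi}_0 B$ for $(-1)$-connected $A,B$ --- the Künneth isomorphism whose Eilenberg--Mac Lane case is used in the proof of Proposition~\ref{prop:MackeyNormsOne} --- so it suffices to treat $T=H/K$; and since restriction commutes with $\underline{\pi}_0$, this comes down to a natural isomorphism $N_K^H\underline{\pi}_0(Y)\xrightarrow{\cong}\underline{\pi}_0(N_K^H Y)$ for $(-1)$-connected $K$-spectra $Y$.

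Next I would construct the candidate map from the Postnikov truncation $c\colon Y\to H\underline{\pi}_0(Y)$, which exists because $Y$ is $(-1)$-connected: applying the spectrum-level norm and then $\underline{\pi}_0$ gives $\underline{\pi}_0(N_K^H Y)\to\underline{\pi}_0(N_K^H H\underline{\pi}_0(Y))$, whose target is exactly $N_K^H\underline{\pi}_0(Y)$ by Definition~\ref{defn:MackeyNorms}. The hard part will be showing this map is an isomorphism, i.e. that $\underline{\pi}_0 N_K^H$ of a $(-1)$-connected spectrum depends only on $\underline{\pi}_0$. For this one returns to the concrete model of the norm and uses: $N_K^H$ preserves $(-1)$-connectivity; and for any $\underline{\pi}_0$-isomorphism $A\to B$ between $(-1)$-connected spectra the cofiber $B/A$ is $0$-connected, while the cofiber of $N_K^H A\to N_K^H B$ admits a finite filtration whose layers are obtained by applying indexed smash products, inductions, and Borel constructions to configurations involving at least one copy of $B/A$; each such layer is therefore $0$-connected, so $N_K^H A\to N_K^H B$ is a $\underline{\pi}_0$-isomorphism. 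Taking $A\to B$ to be $c$ yields the isomorphism sought. Equivalently, one may simply quote the connectivity and relative-norm estimates of \cite[Appendix B]{HHR}; this is really the only place new input is needed.

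Finally I would check coherence: that the isomorphisms above are natural in $Y$, compatible with the decomposition of an arbitrary finite $H$-set into orbits, with the double-coset description of restriction of an orbit, and with the associativity square for the $\mSet$-module structure (equivalently, with the composition isomorphisms $N_K^H N_L^K\simeq N_L^H$ on spectra and on Mackey functors). Given the naturality of Postnikov truncation, the fact that the spectrum-level norm is strong symmetric monoidal and composes correctly \cite{HHR}, and the Künneth isomorphism above, each of these reduces to a formal diagram chase. It is worth recording that $(-1)$-connectivity is used in all three essential places: to form $c$ with $0$-connected cofiber, to keep $N_K^H Y$ connective, and to have the Künneth isomorphism on $\underline{\pi}_0$.
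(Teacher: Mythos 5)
Your proof is correct, and it supplies the substance behind what the paper simply asserts as ``immediate by definition.'' In the paper, Definition~\ref{defn:MackeyNorms} declares the Mackey-functor norm to be $\underline{\pi}_0 N_K^H H(-)$, so the paper treats the proposition as tautological; but, as you correctly identify, there is real content to check, namely that $\underline{\pi}_0 N_K^H$ on $(-1)$-connected spectra factors through $\underline{\pi}_0$, so that the comparison $N_K^H\underline{\pi}_0(Y)=\underline{\pi}_0 N_K^H H\underline{\pi}_0(Y)\to\underline{\pi}_0 N_K^H Y$ induced by the Postnikov truncation $Y\to H\underline{\pi}_0(Y)$ is an isomorphism. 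Your argument --- reduce to a single orbit using bilinearity of $\Box$ and the K\"unneth isomorphism on connective spectra, build the map from the truncation, and prove it is an isomorphism via the relative-norm filtration and connectivity estimates of \cite[Appendix~B]{HHR}, then check coherence against the orbit decomposition, double cosets, and composition of norms --- is exactly the verification needed; all three uses of $(-1)$-connectivity you flag are genuinely required. In short, you have given the proof the paper implies but does not write out.
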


The $G$-commutative monoids for this structure should immediately coincide with the Mackey functors for which the associated Eilenberg-MacLane spectrum has a commutative multiplication (really, a $G$-$E_{\infty}$ one). This is just a slight strengthening of the equivalence of the homotopy category of the heart of $G$-spectra and Mackey functors. By Brun's result, all $G$-commutative monoids are all immediately Tambara functors, and Ullman has shown by essentially the methods sketched above that the converse is also true.

\begin{theorem}[\cite{Ullman}]
If $\m{R}$ is a Tambara functor, then $H\m{R}$ is equivalent to a commutative ring spectrum.
\end{theorem}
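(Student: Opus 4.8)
The strategy is to realize the Tambara functor $\underline R$ directly as a genuine $G$-commutative monoid in the symmetric monoidal coefficient system of Mackey functors, equipped with the norm $G$-symmetric monoidal structure of Proposition~\ref{prop:MackeyNormsOne}, and then to transport this structure to $H\underline R$. This is the multiplicative companion of Theorem~\ref{thm:MackeyFunctors}: there the additive $G$-commutative monoids in $\mCoeff$ were exactly the Mackey functors, and here the claim is that the $G$-commutative monoids in Mackey functors are exactly the Tambara functors.

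The first step is to upgrade the Eilenberg--MacLane functor $H$ to a lax $G$-monoidal functor. On connective objects $H$ is right adjoint to $\underline{\pi}_0$, since a map from a connective $G$-spectrum into an Eilenberg--MacLane spectrum factors uniquely through the zeroth Postnikov section; and the excerpt records that $\underline{\pi}_0$ is strong $G$-monoidal on $(-1)$-connected spectra. The $G$-monoidal form of doctrinal adjunction then endows $H$ with a canonical lax $G$-monoidal structure, namely the mates of the structure isomorphisms of $\underline{\pi}_0$, so that by the proposition that lax $G$-monoidal functors induce maps on $G$-commutative monoids, $H$ carries $\GComm(\Mackey)$ into $\GComm(\Sp^G)$. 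Since the norm $G$-symmetric monoidal structure on $G$-spectra (Example~\ref{ex:GObjects}) is the Hill--Hopkins--Ravenel norm, $\GComm(\Sp^G)$ is the category of genuine $G$-$E_\infty$ ring spectra ($\Ninfty$-algebras for the complete operad \cite{BHNinfty}); hence it suffices to exhibit $\underline R$ as an object of $\GComm(\Mackey)$, and then $H\underline R$ automatically acquires a genuine commutative ring structure. Conversely, Brun's theorem \cite{Brun} together with $\underline{\pi}_0 H = \mathrm{id}$ shows that any such lift recovers the original Tambara structure, so the correspondence is forced.

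To build the $G$-commutative monoid structure on $\underline R$, recall from Definition~\ref{defn:MackeyNorms} that for a $G$-set $T$ the value $T\Box\underline R$ is $\underline{\pi}_0$ of the $T$-indexed smash power $N^T H\underline R$; decomposing $T$ into orbits exhibits it as an iterated $\Box$-product of the Mackey-functor norms $N^G_H(i_H^\ast\underline R)=\underline{\pi}_0 N^G_H H(i_H^\ast\underline R)$. The multiplicative norm of a Tambara functor supplies, for each $H\subset G$, exactly a map of Mackey functors $N^G_H(i_H^\ast\underline R)\to\underline R$: the Tambara norm on cosets, reassembled over all orbits using closure of the admissible sets under restriction and disjoint union, just as in the passage from unstable to stable data in Section~\ref{sec:Coeff}. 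Summing these over the orbits of $T$ and composing with the underlying Green-functor multiplication yields a candidate map $T\Box\underline R\to\underline R$ for every $T$, i.e.\ a candidate extension of $-\Box\underline R$ from $\mSet^{Iso}$ over all of $\mSet$.

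What remains is to check that this assignment is functorial on $\mSet$. Naturality along isomorphisms and along disjoint unions is immediate; the substantive conditions are that the norm maps are maps of commutative monoids (multiplicativity of the Tambara norm), that iterated norms compose compatibly with the canonical isomorphism $N^G_H N^H_K\cong N^G_K$ (associativity of the Tambara norm, matched against the analogous isomorphism for indexed smash powers in \cite{HHR}), and --- the crucial one --- that the restriction of a norm is computed by a double-coset sum of norms of restrictions, which is precisely functoriality along the transfer maps of $\mSet$. This last identity is the distributivity (exponential-diagram) axiom of a Tambara functor, the categorified Frobenius reciprocity already invoked to construct the $G$-symmetric monoidal structure on symmetric monoidal Mackey functors. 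I expect the main obstacle to lie exactly here: reconciling Tambara's exponential diagrams with the module-over-$\mSet$ relations, and arguing that these $\pi_0$-level identities force all the higher coherence for the genuine $E_\infty$ operad (as they do for ordinary commutative monoids, under the pseudofunctor conventions in force), so that no obstruction survives from the higher homotopy of the generally non-Eilenberg--MacLane norms $N^G_H H(i_H^\ast\underline R)$.
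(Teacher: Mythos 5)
The paper does not prove this statement; it is cited to Ullman, with only a brief heuristic that his argument runs along the lines of the $G$\mbox{-}commutative monoid formalism developed in the surrounding section. Your proposal is therefore a reconstruction rather than a comparison, and it is a reasonable reconstruction of exactly that heuristic: realize the Tambara norms as a map $N^T(\underline R)\to\underline R$ of Mackey functors for each admissible $T$, use the fact that $\underline\pi_0$ is strong $G$\mbox{-}monoidal on connective spectra and its right adjoint $H$ is lax $G$\mbox{-}monoidal by doctrinal adjunction, and push the $G$\mbox{-}commutative monoid structure through $H$.

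The place where this stops being a proof is precisely the step you flag at the end and then wave away. A $G$\mbox{-}commutative monoid in $\underline{\mathcal{Sp}}$ in the sense of this paper is a $1$\mbox{-}categorical object: an extension of a pseudofunctor over $\underline{\Set}$ with specified isomorphisms filling the required squares. That is not the same thing as a genuine commutative (equivalently $G$\mbox{-}$E_\infty$) ring spectrum, and the paper is explicit in the introduction that it does not address when its functors derive --- it points to Appendix~B of \cite{HHR} as the place where that issue is handled for $G$\mbox{-}spectra, and flags it as subtle. Your sentence ``$\GComm(\underline{\mathcal{Sp}})$ is the category of genuine $G$\mbox{-}$E_\infty$ ring spectra'' is an identification the paper deliberately declines to assert, and cannot be obtained from the lax monoidality of $H$ alone. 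The analogy you lean on with ordinary commutative rings is also misleading: nonequivariantly $H\underline R$ is a strictly commutative monoid on the point-set level because $H$ is lax symmetric monoidal at the point-set level, so there is nothing to rigidify; but $N_H^G H(i_H^\ast\underline R)$ is not an Eilenberg--Mac\,Lane spectrum, the norm maps out of it exist only up to homotopy, and reconciling these maps across iterated norms and double cosets involves exactly the higher coherences that the pseudofunctor framework suppresses. That rectification --- showing the $\pi_0$-level Tambara identities can be realized by an actual $E_\infty$ (or commutative orthogonal / EKMM) structure on $H\underline R$ --- is the content of Ullman's paper and is not recoverable from the lemmas available in this one. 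As it stands your argument establishes that $H\underline R$ is a $G$\mbox{-}commutative monoid in the weak, coherence-free sense of this paper, which is a necessary step but strictly short of the theorem.
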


In their thesis work, first Mazur (for cyclic $p$-groups) and then Hoyer (for all finite groups) construct a purely algebraic $G$-symmetric monoidal structure on Mackey functors extending the box product \cite{MazurArxiv}, \cite{HoyerThesis}. Hoyer then showed that these two algebraically defined constructions of norms agree with the norms in Definition~\ref{defn:MackeyNorms}. Moreover, Mazur and Hoyer gave an externalized form of Tambara functors, proving a conjecture from the first drafts of this paper. 

\begin{theorem}[{\cite{MazurArxiv}}, {\cite{HoyerThesis}}]\label{thm:Tambara}
The categories of $G$-Tambara functors and of $G$-commutative monoids in Mackey functors are equivalent.
\end{theorem}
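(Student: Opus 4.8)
The plan is to establish the equivalence directly on underlying Mackey functors, matching the two packages of norm data. The first observation is that the underlying object of a $G$-commutative monoid $\mR$ in the symmetric monoidal coefficient system $\Mackey$ — whose monoidal product is the $\Box$-product, i.e. Day convolution over the Cartesian product of $G$-sets — is exactly a commutative Green functor, obtained by feeding the fold map $G/G\amalg G/G\to G/G$ to the extension $N^{(-)}(\mR)\colon\mSet\to\Mackey$; and a $G$-Tambara functor likewise has a well-defined underlying commutative Green functor. So the $\Box$-product structures on the two sides already agree, and the entire content of the theorem is the comparison of norm maps with the multiplicative norms of a Tambara functor. (The additive prototype of the dictionary I am about to use is Theorem~\ref{thm:MackeyFunctors}, where the ``norm'' maps of a $G$-commutative monoid in coefficient systems turned out to be the transfers.)

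Starting from a $G$-commutative monoid $\mR$, I would extract a Tambara structure as follows. Applied to the unique map $G/H\to G/G$, the extension $N^{(-)}(\mR)$ produces the norm $N_H^G\colon G/H\Box\mR\to\mR$; the norms $N_K^H\colon\mR(G/K)\to\mR(G/H)$ for $K\subset H$, and the norms along non-orbital finite $G$-sets, are recovered by restriction and closure under disjoint unions, exactly as in the Remark following the definition of an $\cO$-symmetric monoidal Mackey functor. The Tambara axioms are then read off the functoriality of $N^{(-)}(\mR)$ over all of $\mSet$ (not merely $\mSet^{Iso}$) together with the module-over-$\mSet$ associativity square from the definition of a genuine $G$-symmetric monoidal category: functoriality along disjoint unions and along the non-invertible orbit maps $G/K\to G/H$ gives the additive Mackey functoriality of the norms and the composition law $N_H^G\circ N_K^H\cong N_K^G$; the module square, applied to the Cartesian-product (``self-induction'') action of $\mSet$ on itself, gives the multiplicative double-coset/base-change formula and — crucially — Tambara's exponential law, which is precisely the commutativity of that square evaluated on the universal distributivity (exponential) diagrams $Z\to X\to Y$ of finite $G$-sets. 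That these relations are exactly Tambara's $\mathrm{TNR}$-axioms, with nothing extra and nothing missing, is the content of Mazur's and Hoyer's externalized description of Tambara functors.

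Conversely, given a $G$-Tambara functor $\mR$, I would use the algebraically defined $G$-symmetric monoidal structure on Mackey functors of Mazur and Hoyer to assemble the multiplicative norms of $\mR$ into a functor $N^{(-)}(\mR)\colon\mSet\to\Mackey$: on an orbit $G/H$ it is the Tambara norm, and on a general finite $G$-set it is built by orbit decomposition together with the Green-functor multiplication, so that it recovers $-\Box\mR$ on isomorphisms. The module-over-$\mSet$ axioms for this functor are then precisely the functoriality, base-change, and exponential laws of the Tambara structure, re-read through the dictionary of the previous paragraph. To land in $\GComm(\Mackey)$ for the specific structure named in the statement — the one of Proposition~\ref{prop:MackeyNormsOne}, whose norms pass through Eilenberg--Mac~Lane spectra and the spectrum-level norm of Definition~\ref{defn:MackeyNorms} — one invokes Hoyer's theorem that the purely algebraic norms on Mackey functors coincide with those topological norms. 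Finally, the two constructions are visibly mutually inverse (each is the identity on underlying Mackey functors and transports the norm data to the other by construction) and each is functorial, since a morphism on either side is a map of underlying Mackey functors commuting with all norm maps; this yields the claimed equivalence of categories.

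The main obstacle is not the bookkeeping above but the input it rests on: constructing the external algebraic $G$-symmetric monoidal structure on Mackey functors and verifying that the polynomial (dependent-product) functors on finite $G$-sets make the $\Box$-module associativity square over $\mSet$ equivalent to exactly Tambara's exponential law — this is Mazur's theorem for cyclic $p$-groups \cite{MazurArxiv} and Hoyer's in general \cite{HoyerThesis} — together with Hoyer's comparison of these algebraic norms with the Hill--Hopkins--Ravenel spectrum-level norms \cite{HHR}, which is what pins the equivalence to the $G$-symmetric monoidal structure of Proposition~\ref{prop:MackeyNormsOne} rather than to some other choice of norms.
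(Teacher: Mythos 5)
This theorem carries no proof in the paper at all: it is stated as a result of Mazur and Hoyer and cited to their theses, with the surrounding text merely noting that they constructed the algebraic $G$-symmetric monoidal structure on Mackey functors, that Hoyer compared it with the HHR spectrum-level norms of Definition~\ref{defn:MackeyNorms}, and that they externalized Tambara functors. Your proposal does not contradict the paper; rather, it supplies a plausible roadmap for the cited proof, and it correctly identifies the two load-bearing external inputs — (i) Mazur's and Hoyer's construction and verification that the $\Box$-module axioms over $\mSet$ encode exactly Tambara's TNR and exponential (distributive) laws, and (ii) Hoyer's comparison of the algebraic norms with the topological ones of Proposition~\ref{prop:MackeyNormsOne}, which is what ties the $\GComm$ side to the specific symmetric monoidal Mackey functor named in the statement. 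Your own dictionary (fold map gives the Green multiplication, orbit maps give the norms, the module square over the Cartesian action gives base change and the exponential law) is a fair high-level description of what goes into those theorems, though you are right to flag at the end that the genuine work — in particular, showing that the dependent-product/polynomial structure on finite $G$-sets matches the module associativity square precisely enough to recover the full TNR package with nothing extra — is exactly what Mazur and Hoyer prove, not something your sketch establishes independently.
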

%

\section{Localization}\label{sec:Localization}
We arrive now at one of the the motivations for this work: understanding and explaining equivariant Bousfield localization. We begin with a devastating example, originally considered by McClure in the context of Tate cohomology. Let $\bar{\rho}$ denote the quotient of the real regular representation of $G$ by its trivial subspace. The inclusion of the origin in $\bar{\rho}$ induces an equivariant map
\[
a_{\bar{\rho}}\colon S^{0}\to S^{\bar{\rho}}
\]
which is essential. However, for any proper subgroup $H\subsetneq G$, the restriction of $a_{\bar{\rho}}$ to $H$ is null (the restriction of $\bar{\rho}$ to any proper subgroup has non-trivial fixed points, and we can move the origin to the point at infinity along this subspace).

\begin{proposition}[{\cite{McClure}}]\label{prop:FundamentalExample}
The ring spectrum $S^{0}[a_{\bar{\rho}}^{-1}]$ can not be made a commutative ring spectrum.
\end{proposition}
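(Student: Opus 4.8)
The plan is to argue by contradiction: assume $R:=S^{0}[a_{\bar{\rho}}^{-1}]$ admits the structure of a \emph{genuine} commutative ring spectrum, i.e.\ a $G$-commutative monoid in the $G$-symmetric monoidal category of $G$-spectra. Since $R$ is $(-1)$-connected (its genuine fixed points for every subgroup are connective; indeed $R\simeq\widetilde{E\mathcal{P}}$ for $\mathcal{P}$ the family of proper subgroups), the strong $G$-monoidal functor $\underline{\pi}_0$ carries this structure to a $G$-commutative monoid in Mackey functors, which by Theorem~\ref{thm:Tambara} (equivalently, by Brun's theorem) is a $G$-Tambara functor $\underline{\pi}_0 R$. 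I would then show that the Mackey functor underlying $\underline{\pi}_0 R$ carries no Tambara structure at all.

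First I would compute $\underline{\pi}_0 R$. Writing $R=\operatorname{colim}_n S^{n\bar{\rho}}$ with transition maps given by multiplication by $a_{\bar{\rho}}$, the genuine $G$-fixed points are $\operatorname{colim}_n S^{(n\bar{\rho})^G}=\operatorname{colim}_n S^{0}=S^{0}$, since $\bar{\rho}^G=0$ and the induced self-map of $S^0$ is the identity; hence $(\underline{\pi}_0 R)(G/G)=\Z$, with unit $1\neq 0$. For a proper subgroup $H\subsetneq G$ the representation $\bar{\rho}|_H$ has $\dim(\bar{\rho}|_H)^H=[G:H]-1\geq 1$, and $a_V$ is null for any $H$-representation $V$ with $V^H\neq 0$; so $\operatorname{res}_H a_{\bar{\rho}}$ is null, and $i_H^{*}R=\operatorname{colim}_n S^{\,n(\bar{\rho}|_H)}$ is a sequential colimit of positive-dimensional spheres along null maps, hence contractible. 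Therefore $(\underline{\pi}_0 R)(G/H)=0$ for every proper $H$: the Mackey functor underlying $\underline{\pi}_0 R$ is $\Z$ on $G/G$ and $0$ on every other orbit.

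Next I would extract the contradiction. A Tambara structure on $M:=\underline{\pi}_0 R$ provides, for $H=e$, a multiplicative and unital norm map $N_e^{G}\colon M(G/e)\to M(G/G)$. Since $M(G/e)=0$ is the zero ring, $1_{M(G/e)}=0$, so unitality forces $N_e^{G}(0)=N_e^{G}(1)=1\in\Z$. On the other hand, the exponential (``additivity'') axiom for a Tambara functor gives $N_e^{G}(x+y)=N_e^{G}(x)+N_e^{G}(y)+(\text{transfer terms involving products of }x\text{ and }y)$, which at $x=y=0$ reads $N_e^{G}(0)=2\,N_e^{G}(0)$, forcing $N_e^{G}(0)=0$; this contradicts $1\neq 0$ in $\Z$. (Equivalently, in the language of Definition~\ref{defn:MackeyNorms} the norm functor sends the zero Mackey functor to the zero Mackey functor, so the Green-functor unit of $M$, which must factor through $N_e^{G}$ applied to the zero ring $i_e^{*}M$, would be the zero map, contradicting $1\mapsto 1$ on $G/G$.) Hence $R$ cannot be a genuine commutative ring spectrum.

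The step I expect to be the main obstacle is conceptual rather than computational: being precise that ``commutative ring spectrum'' here must mean the \emph{genuine} ($G$-$E_\infty$, all-norms) notion and that the obstruction genuinely lives in the norms. Indeed $R$ is a Bousfield localization of $S^0$ obtained by inverting a single class, so it is a naive $E_\infty$-ring and no obstruction is visible at the level of the multiplication $R\wedge R\to R$ alone; the whole force of the argument is that the extra norm maps of a $G$-commutative monoid (the ambient framework of Section~\ref{sec:Localization}) cannot coexist with $i_H^{*}R\simeq *$ for proper $H$ while $\pi_0^{G}R\neq 0$. The only other point requiring care is the verification that $i_H^{*}R\simeq *$ for \emph{every} proper $H$ (so that $\underline{\pi}_0 R$ is supported only at $G/G$), which rests on the nullity of $a_{\bar{\rho}}|_H$ — from $(\bar{\rho}|_H)^H\neq 0$ — together with the fact that a sequential colimit along null maps is contractible.
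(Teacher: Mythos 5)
Your argument is correct, and it identifies precisely the same obstruction the paper uses --- that $i_{H}^{\ast}R\simeq\ast$ for every proper $H$, so the norm $N_{H}^{G}i_{H}^{\ast}R$ must be trivial, and a unital map out of a trivial thing into $R$ forces $1=0$ in $\pi_{0}^{G}R=\Z$. The difference is execution: the paper observes this directly in spectra in one line (a contractible commutative ring spectrum cannot admit a unital ring map to a noncontractible one, so the required norm map $N_{H}^{G}i_{H}^{\ast}R\to R$ cannot exist), whereas you pass through $\underline{\pi}_{0}$, invoke Brun's theorem (Theorem~\ref{thm:Tambara}), and derive a contradiction from the Tambara axioms on the Mackey functor supported only at $G/G$. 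Your route buys a concrete algebraic picture of the obstruction --- there is literally no Tambara structure on the Mackey functor $(\Z,0,\dots,0)$, and it makes explicit that the obstruction is invisible in the underlying (non-genuine) $E_{\infty}$ structure. It costs more machinery: you need $R$ to be $(-1)$-connected for $\underline{\pi}_{0}$ to be lax $G$-monoidal, you need Brun/Ullman/Mazur--Hoyer, and you need the exponential formula to pin down $N_{e}^{G}(0)=0$ (your parenthetical via ``norm of the zero Mackey functor is zero'' is closer to the paper's spectrum-level argument and is cleaner than the exponential-formula route). Both the connectivity check and the computation $\underline{\pi}_{0}R = (\Z,0,\dots)$ are, as you note, the points requiring honest verification, and you handle them correctly via $R\simeq\widetilde{E}\mathcal{P}$ and the nullity of $\res_{H}a_{\bar{\rho}}$.
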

\begin{proof}
For any proper subgroup $H$, the restriction $i_H^\ast S^0[a_{\bar{\rho}}^{-1}]$ is contractible, so we cannot have a map of commutative unital ring spectra
\[
N_{H}^{G}i_{H}^{\ast}S^{0}[a_{\bar{\rho}}^{-1}]\to S^{0}[a_{\bar{\rho}}^{-1}].
\]
\end{proof}

This is an example of nullification in the sense of Farjoun \cite{Farjoun}. The functor
\[
S^{0}\mapsto S^{0}[a_{\bar{\rho}}^{-1}]
\]
is the nullification functor associated to the localizing subcategory generated by $G/H$ for $H$ a proper subgroup and their desuspensions. Thus we conclude that localization need not preserve commutative rings!

To understand what happens, it is helpful to return to the original proof in EKMM. The following theorem is the heart of their localization results \cite[Thm VIII.2.2]{EKMM}, and it works without any change in the equivariant context. The key ingredient is an analysis of the free algebra functor on an acyclic (see \cite[Lem VIII.2.7]{EKMM} for the classical version).

\begin{theorem}\label{thm:LocalizationandOperads}
Let $L$ be a localization. If for every $L$-acyclic $Z$ the the unit map from $S^0$ to the free $\cO$-algebra $P_{\cO}[Z]$ is an $L$ equivalence, then $L$ takes $\cO$-algebras to $\cO$-algebras
\end{theorem}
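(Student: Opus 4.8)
The plan is to reduce the statement to the standard EKMM-type argument by exploiting the fact that a Bousfield localization $L$ is determined by its acyclics, which form a localizing (tensor) ideal. Given an $\cO$-algebra $R$, I want to produce an $\cO$-algebra structure on $LR$ and a map $R \to LR$ of $\cO$-algebras that is an $L$-equivalence; by uniqueness of localization this will exhibit $LR$ as the localization in $\cO$-algebras. The strategy is to build $LR$ (or rather the local $\cO$-algebra) as a suitable colimit/transfinite composition of pushouts along free $\cO$-algebra maps, exactly as in the construction of the localization model structure, and to check at each stage that the maps introduced are $L$-equivalences.

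First I would recall the EKMM small-object-style construction: one produces $LR$ as a transfinite composite of pushouts in $\cO$-algebras, where at each stage one attaches a free $\cO$-algebra $P_\cO[W]$ along a map $P_\cO[W] \to R_\alpha$, with $W$ ranging over (cofibers of) the generating acyclics. The key point is that each such pushout is, on underlying spectra, built out of terms of the form $R_\alpha \wedge (\text{symmetric powers of } W)$, and these symmetric powers are themselves $L$-acyclic once $W$ is, since the acyclics are a localizing ideal closed under the relevant extended/symmetric powers — this is precisely where the hypothesis enters. Second, I would invoke Theorem~\ref{thm:LocalizationandOperads}'s hypothesis in the form: for $L$-acyclic $Z$, the unit $S^0 \to P_\cO[Z]$ is an $L$-equivalence, hence (smashing with $R_\alpha$, using that $L$-equivalences are closed under smashing with anything and under the relevant homotopy colimits) each attaching step $R_\alpha \to R_{\alpha+1}$ is an $L$-equivalence. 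Taking the transfinite composite, $R \to R_\infty$ is an $L$-equivalence and $R_\infty$ is $L$-local, so $R_\infty \simeq LR$ and it carries a natural $\cO$-algebra structure.

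The main obstacle — and the place where equivariance genuinely matters — is verifying that the free $\cO$-algebra functor applied to an $L$-acyclic really does stay within the $L$-acyclics, i.e. that the hypothesis of the theorem is not vacuous to apply: one must know that the filtration of $P_\cO[Z]$ has associated graded pieces built from $Z$ by operations (norms, symmetric powers indexed by admissible $G$-sets) under which the localizing ideal of acyclics is closed. In the non-equivariant EKMM setting this is \cite[Lem VIII.2.7]{EKMM}; equivariantly it is the statement that $L$-acyclics are closed under the $\cO$-indexed symmetric monoidal powers $T \Box (-)$, which is exactly the structure an $\cO$-symmetric monoidal category records. So the proof is: (1) set up the cellular/transfinite construction of the localization in $\cO$-algebras; (2) identify the pushout-along-free steps and their underlying filtrations; (3) apply the hypothesis together with closure of $L$-acyclics under smashing and homotopy colimits to conclude each step is an $L$-equivalence; (4) conclude $R_\infty \simeq LR$ as $\cO$-algebras. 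I expect step (2)–(3) — bookkeeping the filtration of the free algebra and checking the acyclicity is preserved — to be the technical heart, while the rest is formal.
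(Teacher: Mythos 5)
Your proposal is correct and follows exactly the route the paper intends: the paper gives no proof of its own, stating only that the result is \cite[Thm VIII.2.2]{EKMM} and ``works without any change in the equivariant context,'' with the free-algebra analysis of \cite[Lem VIII.2.7]{EKMM} as the key input. Your transfinite-pushout construction, the identification of each attaching step via the filtration of the free $\cO$-algebra by extended powers, and the use of the hypothesis to see those pieces are $L$-acyclic is a faithful reconstruction of that EKMM argument.
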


For the classical case, we approximate the free commutative ring spectrum by the weakly equivalent free $E_{\infty}$-ring spectrum, and the cellular filtration shows that acyclics go to acyclics. Equivariantly, we can mirror this. However, there are many kinds of generalizations of the $E_\infty$-operads in the equivariant context, the $\Ninfty$ operads studied in \cite{BHNinfty}.

Let $U$ be a $G$-universe, and let $\mathcal L(U)$ denote the linear isometries operad built out of $U$. This is a $G$-operad. If $U$ is a trivial universe, then this is an ordinary $E_{\infty}$-operad with a trivial $G$-action. If $U$ is the complete universe, then $\mathcal L(U)$ is a $G$-$E_{\infty}$-operad, and $\mathcal L(U)$-algebras are equivalent to commutative ring spectra. For $U$ between these two extremes, $\mathcal L(U)$ is simply underlain by an ordinary $E_\infty$ operad.

Associated to $\cL(U)$ is an indexing system $\cO_{L(U)}$, defined by
\[
T \in\cO_{L(U)}({G/H})\Leftrightarrow \cL_H\big(F(T ,U),U\big)\neq\emptyset,
\]
where $\cL_H(-,-)$ is the space $H$-equivariant linear isometries (see \cite{BHNinfty} for more details).

\begin{theorem}\label{thm:Localization}
Let $L$ be a localization of the category of $G$-spectra.
\begin{enumerate}
\item If the category of $L$-acylics is an $\cO_{L(U)}$-symmetric monoidal subcategory of $\m{\Sp}$, then $L$ takes $\cL(U)$-algebras to $\cL(U)$-algebras.
\item If the category of $L$-acylics is a genuine $G$-symmetric monoidal subcategory of $\m{\Sp}$, then $L$ takes commutative rings to commutative rings.
\end{enumerate}
\end{theorem}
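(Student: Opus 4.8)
The plan is to deduce both statements from Theorem~\ref{thm:LocalizationandOperads}, so that the whole task reduces to verifying its hypothesis in the two relevant cases: for every $L$-acyclic $Z$, the unit map $S^0\to P_{\cL(U)}[Z]$ of the free $\cL(U)$-algebra is an $L$-equivalence. Part (2) will then be the special case of part (1) in which $U$ is a complete universe: there $\cL(U)$ is a $G$-$E_\infty$-operad, $\cL(U)$-algebras are equivalent to commutative ring spectra, and the associated indexing system $\cO_{L(U)}$ is all of $\mSet$, so ``genuine $G$-symmetric monoidal subcategory'' coincides with ``$\cO_{L(U)}$-symmetric monoidal subcategory'' in that case. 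Hence only (1) requires a genuine argument.

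First I would set up the standard extended-power filtration of the free algebra. Choosing $Z$ (and the operad) suitably cofibrant so that the point-set free algebra computes the derived one, $P_{\cL(U)}[Z]$ is filtered with $F_0=S^0$ — the $n=0$ summand, which \emph{is} the unit map under study — and with filtration quotients
\[
F_n/F_{n-1}\simeq \cL(U)(n)_+\wedge_{\Sigma_n}Z^{\wedge n}.
\]
It therefore suffices to show that $F_n/F_{n-1}$ is $L$-acyclic for every $n\ge 1$; then the cofiber $P_{\cL(U)}[Z]/S^0$ is built from $L$-acyclics, so $S^0\to P_{\cL(U)}[Z]$ is an $L$-equivalence, and Theorem~\ref{thm:LocalizationandOperads} applies.

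Next I would decompose the extended power into norms along admissible sets, following \cite{BHNinfty}. Since $\cL(U)$ is underlain by an $\Ninfty$-operad, $\cL(U)(n)$ is a universal space for the family of subgroups of $G\times\Sigma_n$ meeting $\Sigma_n$ trivially, hence a $(G\times\Sigma_n)$-CW complex whose cells have the form $(G\times\Sigma_n)/\Gamma$ with $\Gamma$ the graph of a homomorphism $H\to\Sigma_n$, $H\subseteq G$. Smashing such a cell with $Z^{\wedge n}$ over $\Sigma_n$ produces $G_+\wedge_H(T\Box i_H^\ast Z)$, where $T$ is the cardinality-$n$ $H$-set determined by $\Gamma$ and $T\Box-$ is the $\cO_{L(U)}$-symmetric monoidal power, i.e. the composite of norms of restrictions read off from the orbit decomposition of $T$. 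The dictionary of \cite{BHNinfty} identifies which cells occur: $(G\times\Sigma_n)/\Gamma$ appears in $\cL(U)(n)$ exactly when the corresponding $H$-set $T$ is admissible, $T\in\cO_{L(U)}(G/H)$. Thus $F_n/F_{n-1}$ is built by homotopy cofiber sequences from spectra $G_+\wedge_H(T\Box i_H^\ast Z)$ with $T$ admissible. Finally one runs the closure argument: $i_H^\ast Z$ is $L$-acyclic at level $G/H$; since $T$ is admissible, the hypothesis that the $L$-acyclics form an $\cO_{L(U)}$-symmetric monoidal subcategory of $\m\Sp$ makes $T\Box i_H^\ast Z$ $L$-acyclic; induction $G_+\wedge_H(-)$ preserves acyclicity; and $L$-acyclics are closed under wedges and homotopy cofibers. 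Hence each $F_n/F_{n-1}$ with $n\ge1$ is $L$-acyclic, as needed.

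The main obstacle is the middle step: making the decomposition ``extended power $=$ iterated cofiber of norms indexed by admissible sets'' precise, including the compatibility of the cell-by-cell identification with the $\Box$-operation of the $\cO_{L(U)}$-symmetric monoidal structure, and tracking carefully what ``$L$-acyclic at level $G/H$'' means so that restriction and induction interact correctly with the coefficient-system structure. There is also the routine but real homotopical bookkeeping — cofibrant models for $Z$ and for $\cL(U)$ so that the point-set free-algebra filtration has the stated associated graded — which is precisely the kind of model-categorical subtlety flagged in the introduction, and is the content of Appendix~B of \cite{HHR} in the $G$-spectra case; I would invoke that technology rather than reprove it.
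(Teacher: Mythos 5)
Your proposal reproduces the paper's own proof: reduce to Theorem~\ref{thm:LocalizationandOperads}, pass to the cellular/skeletal filtration of $\cL(U)_n$ as a $(G\times\Sigma_n)$-CW complex whose isotropy groups are graphs $\Gamma_T$ with $T$ admissible, identify the cell contribution $(G\times\Sigma_n/\Gamma_T)_+\wedge_{\Sigma_n}Z^{\wedge n}$ with the $\Box$-power of $Z$ over $T$, and invoke closure of the acyclics; part (2) via the complete universe where $\cO_{L(U)}=\mSet$ and the collapse map makes the free $\cL(U)$-algebra compute the free commutative algebra. Your version is, if anything, slightly more explicit in spelling out $(G\times\Sigma_n/\Gamma_T)_+\wedge_{\Sigma_n} Z^{\wedge n}\simeq G_+\wedge_H(T\Box i_H^\ast Z)$ and the resulting restriction/induction steps, which the paper compresses into the shorthand ``$T\Box Z$''.
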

\begin{proof}
We need only show that if $Z$ is a cofibrant $L$-acylic, then the unit map to the free $\cL(U)$-algebra on $Z$ is an $L$-equivalence. 
Equivalently, we need only show that
\[
L\big(\cL(U)_{n+}\wedge_{\Sigma_{n}} Z^{\wedge n}\big)\simeq\ast
\]
for all $n>0$.

We show this by analyzing the skeletal filtration of $\cL(U)_n$. Let $T\in\cO_{L(U)}(G/H)$ have cardinality $n$, and let $\Gamma_T\subset G\times\Sigma_n$ denote the graph of a homomorphism $H\to\Sigma_n$ defining the $H$-set structure on $T$. This is well-defined up to conjugation. 

The equivariant he associated graded for the cellular filtration of $\cL(U)_n$ is a wedge of induced spheres of the form 
\[
(G\times\Sigma_{n}/\Gamma_{T})_+\wedge S^{m},
\] 
where $T \in\cO_{L}(U)$, and so the associated graded for $\cL(U)_{n+}\wedge_{\Sigma_{n}}Z^{\wedge n}$ is a wedge of spectra of the form
\[
\big((G\times\Sigma_{n}/\Gamma_{T})_+\wedge S^{m}\big)\wedge_{\Sigma_{n}}Z^{\wedge n}\simeq (G\times\Sigma_{n}/\Gamma_{T})_+\wedge_{\Sigma_{n}} Z^{\wedge n}\wedge S^{m},
\]
since $S^{m}$ has a trivial action. By definition of the $G$-symmetric monoidal structure on $\m{\Sp}$,
\[
(G\times\Sigma_{n}/\Gamma_{T})_+\wedge_{\Sigma_{n}} Z^{\wedge n}\simeq T \Box Z.
\]
Thus if the category of acyclics is $\cO_{L}(U)$-symmetric monoidal, then $T \Box Z$ is again acyclic, as required.

For the second part, we first note that if $U$ is a complete universe, then $\cO_{L}(U)=\mSet$. Moreover, the canonical collapse map
\[
\cL(U)_{n+}\wedge_{\Sigma_{n}} Z^{\wedge n}\to Z^{\wedge n}/\Sigma_{n}
\]
is a weak homotopy equivalence, and thus the free commutative ring spectrum on $Z$ is weakly equivalent to the free $\cL(U)$-algebra on $Z$. By our assumptions on the category of acyclics, we see that for every acyclic $Z$,
\[
Sym(Z)\simeq_{L} S^{0},
\]
and therefore we can form the localization entirely in commutative rings.
\end{proof}
\begin{corollary}\label{cor:AtLeastEinfty}
For any localization $L$ and for any commutative ring spectrum $R$, $L(R)$ is always an algebra over $\cL(\mathbb R^{\infty})$, the trivial linear isometries operad, and so necessarily has a multiplication that is unital, associative, and commutative up to coherent homotopy.
\end{corollary}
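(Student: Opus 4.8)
The plan is to deduce this from the first part of Theorem~\ref{thm:Localization} by taking $U=\mathbb R^\infty$ to be the trivial $G$-universe. The first point to record is that the indexing system of the trivial linear isometries operad is the trivial one, $\cO_{\cL(\mathbb R^\infty)}=\cO^{tr}$. This follows at once from the defining condition $T\in\cO_{\cL(\mathbb R^\infty)}(G/H)\Leftrightarrow\cL_H\big(F(T,\mathbb R^\infty),\mathbb R^\infty\big)\neq\emptyset$: the $H$-representation $F(T,\mathbb R^\infty)$ is the permutation representation on $T$, and it admits an $H$-equivariant isometric embedding into the trivial universe exactly when it is a trivial representation, i.e.\ exactly when $T$ is a trivial $H$-set. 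On the other side, $\cL(\mathbb R^\infty)$ is underlain by an ordinary $E_\infty$-operad with trivial $G$-action, so its algebras are precisely $E_\infty$-ring spectra --- objects with a unital, associative, coherently commutative multiplication --- and every (genuine) commutative ring spectrum $R$, restricting as it does to an algebra over any $E_\infty$-operad, is in particular such an algebra.

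With this choice of $U$, the hypothesis of Theorem~\ref{thm:Localization}(1) asks only that the category of $L$-acyclics be an $\cO^{tr}$-symmetric monoidal subcategory of $\m{\Sp}$. Since the $\cO^{tr}$-structure consists of nothing but the canonical exponential maps, this reduces to the single condition that the $L$-acyclics be closed under finite smash powers $Z\mapsto Z^{\wedge n}$, $n\geq 1$ --- and indeed, inspecting the proof of Theorem~\ref{thm:Localization}, this is the only property of the acyclics used, entering through the identification $T\Box Z=Z^{\wedge|T|}$ of the associated graded pieces of the skeletal filtration of $\cL(\mathbb R^\infty)_{n+}\wedge_{\Sigma_n}Z^{\wedge n}$. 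So the whole argument comes down to establishing this closure property, and that is where I expect the real work to lie.

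For a homological localization $L=L_E$ the closure is immediate, since $Z$ being $E$-acyclic forces $E\wedge Z^{\wedge n}\simeq (E\wedge Z)\wedge Z^{\wedge(n-1)}\simeq\ast$; more generally it holds whenever the $L$-acyclics form a smashing ideal, which is the case for a nullification, the category of acyclics then being the localizing subcategory generated by a set of objects that (in the situations of interest, such as the proper orbits $G/H_+$ in Proposition~\ref{prop:FundamentalExample}) is stable under the relevant smash products. Granting the closure, Theorem~\ref{thm:Localization}(1) applies verbatim and shows $L$ carries $\cL(\mathbb R^\infty)$-algebras to $\cL(\mathbb R^\infty)$-algebras; applied to the $\cL(\mathbb R^\infty)$-algebra $R$, this yields that $L(R)$ is again an $\cL(\mathbb R^\infty)$-algebra, hence has the asserted multiplication.
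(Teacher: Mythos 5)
Your proof takes exactly the paper's route: observe that $\cO_{\cL(\mathbb R^\infty)} = \cO^{tr}$ and invoke Theorem~\ref{thm:Localization}(1). The paper's proof \emph{is} that one-line identification, deferring to the earlier remark that being $\cO^{tr}$-symmetric monoidal is ``no condition at all''; your explicit reduction to closure of the acyclics under finite smash powers, with the ideal-property justification, is a correct unpacking of that remark but is more cautious than the paper itself bothers to be, since in the EKMM framework being cited the acyclics of any localization automatically form a $\wedge$-ideal.
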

\begin{proof}
The indexing category associated to $\cL(\mathbb R^{\infty})$ is just $\cO^{tr}$.
\end{proof}

We can return to the example at the start of the section.
\begin{definition}\label{defn:FProperSG}
Let $\cO^{Prop}$ be the indexing category defined by
\[
\cO^{Prop}(G/H)=\begin{cases}
\Set & H=G, \\
\Set^{H} & \text{otherwise.}
\end{cases}
\]
\end{definition}

\begin{proposition}\label{prop:GeomFPExample}
Let $\ccZ(G/G)$ be the triangulated subcategory of $\Sp^G$ generated by all spectra of the form $G/H_{+}$ for $H$ a proper subgroup of $G$, and extend this to a symmetric monoidal coefficient system $\ccZ$ by letting $\ccZ(G/H)$ be the triangulated subcategory of $\Sp^H$ generated by the restriction of $\ccZ(G/G)$. Then $\ccZ$ is an $\cO^{Prop}$-symmetric monoidal subcategory of $\m{\Sp}$, but for no larger $\cO$ is $\ccZ$ a $\cO$-symmetric monoidal subcategory.
\end{proposition}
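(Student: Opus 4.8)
The plan is to recognize $\ccZ(G/G)$ as the category of $G$-spectra $X$ with $\Phi^G X\simeq\ast$, so that the whole question is governed by how $\Phi^G$ interacts with the norm. Here $\ccZ(G/G)$ — the subcategory of $\Sp^G$ generated by the orbits $G/H_+$ with $H$ proper — is the category of $\cP$-torsion spectra $E\cP_+\wedge\Sp^G$, where $\cP$ is the family of proper subgroups; since $\Phi^G(E\cP_+)\simeq\ast$ this coincides with $\{X:\Phi^G X\simeq\ast\}$. For a proper subgroup $H$ every subgroup of $H$ is again proper in $G$, so $i_H^\ast E\cP_+\simeq S^0$ and hence $\ccZ(G/H)=\Sp^H$; equivalently, each $G/L_+$ with $L\subseteq H$ lies in $\ccZ(G/G)$, its restriction to $H$ retracts onto $H/L_+$, and the orbits $H/L_+$ generate $\Sp^H$.

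Granting this, the first assertion is nearly automatic. Associativity of $\Box$ and compatibility with the canonical exponential map are inherited from the ambient genuine $G$-symmetric monoidal category $\m{\Sp}$, so I only need to check that $T\Box M\in\ccZ(G/H)$ whenever $T\in\cO^{Prop}(G/H)$ and $M\in\ccZ(G/H)$. If $H$ is proper this is vacuous, since $\ccZ(G/H)=\Sp^H$. If $H=G$ then $\cO^{Prop}(G/G)=\Set$, so $T$ is a trivial finite $G$-set and $T\Box M\cong M^{\wedge|T|}$; since $\Phi^G$ is strong symmetric monoidal, $\Phi^G(M^{\wedge|T|})\simeq(\Phi^G M)^{\wedge|T|}\simeq\ast$ for $|T|\geq 1$, so $M^{\wedge|T|}$ is again $\Phi^G$-acyclic and lies in $\ccZ(G/G)$.

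For the second assertion, let $\cO\supsetneq\cO^{Prop}$ be any larger indexing system. Since $\cO^{Prop}(G/H)=\Set^H$ is already the maximal possible value at every proper $H$, the inclusion must be strict at $G/G$: $\cO(G/G)$ contains a non-trivial finite $G$-set, and therefore — indexing systems being closed under subobjects — it contains some orbit $G/H$ with $H\subsetneq G$. It then suffices to produce $M\in\ccZ(G/G)$ with $G/H\Box M\notin\ccZ(G/G)$. Take $M=G/H_+$, which lies in $\ccZ(G/G)$ because $H$ is proper. The $G$-symmetric monoidal structure on $\m{\Sp}$ coming from the Hill-Hopkins-Ravenel norm identifies $G/H\Box M$ with $N_H^G\, i_H^\ast M$, and the formula $\Phi^G\circ N_H^G\simeq\Phi^H$ of \cite{HHR} gives
\[
\Phi^G\big(G/H\Box M\big)\;\simeq\;\Phi^H\big(i_H^\ast(G/H_+)\big)\;\simeq\;\bigvee_{x\in H\backslash G/H}\Phi^H\big((H/(H\cap xHx^{-1}))_+\big),
\]
in which the term $x=e$ contributes $\Phi^H(S^0)\simeq S^0$. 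Thus $\Phi^G(G/H\Box M)\not\simeq\ast$, so $G/H\Box M\notin\ccZ(G/G)$ and $\ccZ$ is not an $\cO$-symmetric monoidal subcategory. (This is a categorical shadow of Proposition~\ref{prop:FundamentalExample}.)

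The main obstacle is the second assertion, and the point worth isolating is that ``strictly larger than $\cO^{Prop}$'' is an extremely weak hypothesis: closure under subobjects forces $\cO$ to admit a single new norm $N_H^G$ with $H$ proper, and that one norm already leaves the category of $\Phi^G$-acyclic spectra because $\Phi^G\circ N_H^G\simeq\Phi^H$ detects the non-contractible $H$-geometric fixed points of $i_H^\ast(G/H_+)$. Everything else — identifying $\ccZ(G/G)$ with the $\Phi^G$-acyclics, computing $\ccZ(G/H)$ for proper $H$, and the monoidality of $\Phi^G$ — is routine equivariant bookkeeping.
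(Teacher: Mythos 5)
Your proposal is correct and lands on the same counterexample as the paper (the orbit $Z=G/H_+$ for proper $H$, whose restriction to $H$ has an $S^0$ summand coming from the fixed coset $eH$), but you arrive at the obstruction by a different detection mechanism. The paper argues directly at the level of the triangulated category: $i_H^\ast(G/H_+)$ retracts onto $S^0$, functoriality of $N_H^G$ shows $S^0\simeq N_H^G(S^0)$ is a retract of $N_H^G\,i_H^\ast(G/H_+)$, and retract-closure of the localizing subcategory $\ccZ(G/G)$ then forces $S^0\in\ccZ(G/G)$, a contradiction. You instead first identify $\ccZ(G/G)$ with the $\Phi^G$-acyclic spectra (equivalently, the $E\cP_+$-torsion objects), then use the formula $\Phi^G\circ N_H^G\simeq\Phi^H$ together with the double coset decomposition of $i_H^\ast(G/H_+)$ to exhibit a nonzero $\Phi^G$. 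Both observations are substantively the same $S^0$ summand wearing different clothes. What your route buys is an invariant characterization of $\ccZ$ and a cleaner reason why the first half of the statement holds at level $G/G$ (via monoidality of $\Phi^G$); what the paper's route buys is independence from the $\Phi^G\circ N_H^G\simeq\Phi^H$ identity and a shorter argument. Your observation that any indexing system strictly containing $\cO^{Prop}$ must, by closure under subobjects, contain some nontrivial orbit $G/H$ at level $G/G$ is left implicit in the paper but is exactly the reduction it is using.
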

\begin{proof}
We first observe that for any proper subgroup $H$, $\ccZ(G/H)=\Sp^{H}$, and so the restriction to any proper subgroup is then as structured as $\Sp^{H}$ itself is. It will therefore suffice to show that for any $G/H\in\cOrb_{G}$ with $H\subsetneq G$, there is some $Z\in\ccZ(G/G)$ such that
\[
G/H\Box Z=N_{H}^{G}i_{H}^{\ast}Z\notin\ccZ(G/G).
\]
For this, it suffices to take $Z=G/H_{+}$. The restriction to $H$ of this is $S^{0}\vee Z'$ for some $H$-spectrum $Z'$, and so the norm back to $G$ is of the form $S^{0}\vee W$ for some $G$-spectrum $W$ (the exact form of which is immaterial). Since a localizing category is closed under retracts, if this were in $\ccZ(G/G)$, then $S^{0}$ would be in $\ccZ(G/G)$ and hence, $\ccZ$ would be all of $\m{\Sp}$.
\end{proof}

Thus by Theorem~\ref{thm:Localization} we see again that we should not have expected the nullification functor associated to $\ccZ$ to preserve commutative objects. White has built on this in a more general model category context \cite{White}.

\section{Module Categories}\label{sec:Modules}

While the spectra which can arise as localizations of commutative ring spectra need not be commutative ring spectra, they do necessarily have a multiplication that is commutative up to coherent homotopy. The yoga of modern structured ring spectra implies that the category of modules over such a ring spectrum is symmetric monoidal (see, for instance \cite{BHNinftyCat}). This just requires the underlying symmetric monoidal product, so we can ask now, ``Where do the norms show up in the modules over an $\cO$-commutative monoid?'' In spectra, this provides additional structure on the categories of modules. However, even in algebra is this an interesting question: what extra structure does the category of modules over a Tambara functor have?

Assume that the underlying symmetric monoidal coefficient system has levelwise colimits and that the operation $T \Box-$ commutes with sifted colimits (this is true in the most interesting cases of $G$-objects in a symmetric monoidal category in which the symmetric monoidal product commutes with colimits in each factor by \cite[Prop. A.27]{HHR}). In this case, we can make the standard definitions.

\begin{definition}\label{defn:RModules}
If $R$ is an $\cO$-commutative monoid, then an $R$-module is an object $M$ of $\mcC(G/G)$ together with a map
\[
\mu\colon R\otimes M\to M
\]
such that the following diagrams commute
\[
\xymatrix{
{R\otimes R\otimes M}\ar[r]^{m\otimes 1} \ar[d]_{1\otimes\mu} & {R\otimes M}\ar[d]^{\mu} \\
{R\otimes M}\ar[r]_{\mu} & {M}
}\quad \text{and}\quad
\xymatrix{
{{\mathbb S}\otimes M}\ar[r]^{\eta\otimes 1}\ar[dr]_{\cong} & {R\otimes M}\ar[d]^{\mu} \\
& {M,}}
\]
where $\mathbb S$ is the symmetric monoidal unit, $\eta$ is the unit map for $R$, and the map labeled $\cong$ is the canonical isomorphism.
\end{definition}

Since the underlying monoid for an $\cO$-commutative monoid is a just commutative monoid, we can form the usual ``balanced tensor product'' construction to build a symmetric monoidal product on the category of $R$-modules.

\begin{definition}\label{def:TensorOverR}
If $M$ and $N$ are $R$-modules for an $\cO$-commutative monoid $R$, then define $M\otimes_{R}N$ to be the coequalizer of
\[
\xymatrix{
{M\otimes R\otimes N}\ar@<1ex>[rr]^{1_{M}\otimes\mu_{N}}\ar@<-1ex>[rr]_{\mu_{M}\otimes 1_{N}} & & {M\otimes N.}
}
\]
\end{definition}

This isn't quite a loose enough definition. In particular, $\mathbb S\mhyphen\Mod$ should be $\mcC$. To make the category of $R$-modules into a symmetric monoidal Mackey functor, we loosen the requirements on where $M$ lives.

\begin{definition}\label{defn:RMod}
If $R$ is an $\cO$-commutative ring, then let $\mRMod$ denote the symmetric monoidal coefficient system whose value at $G/H$ is $i_{H}^{\ast}\RMod$.
\end{definition}

Even with this looser definition, when we speak of an $R$-module, unless otherwise specified, we assume that it is an object in $\mcC(G/G)$ together with the appropriate structure maps. If it lives in $\mcC(G/H)$, then we replace all instances of $G$ in discussions that follow with $H$.

The fact that $\mcC$ was a symmetric monoidal coefficient system guarantees that $\mRMod$ has the appropriate restriction maps. We want now to show that in fact, it is an $\cO$-Mackey functor.

\begin{proposition}\label{prop:T BoxRMod}
Let $T $ be a $G$-set. Then for any $R$-module $M$, $T \Box M$ is naturally (in $T $ and $M$) a $T \Box R$-module.

If $M$ and $N$ are $R$-modules, then
\[
T \Box (M\otimes_R N)\cong (T \Box M)\otimes_{T \Box R} (T \Box N).
\]
\end{proposition}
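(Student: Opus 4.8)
The plan is to use the basic structural fact that $T\Box-$ is a strong symmetric monoidal functor in the $\mcC$-variable (part of the data of an $\cO$-symmetric monoidal category, applied here with $\cO=\mSet$ since $T$ is an arbitrary $G$-set and we are working inside $\m{\Sp}$ or more generally a genuine $G$-symmetric monoidal category), together with the fact that a strong monoidal functor carries monoids to monoids and module objects to module objects.

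First I would unwind the claim that $T\Box M$ is a $T\Box R$-module. Applying the strong monoidal structure isomorphism $(A\otimes B)\mapsto$ $T\Box(A\otimes B)\cong (T\Box A)\otimes(T\Box B)$ to the structure map $\mu\colon R\otimes M\to M$ gives a map
\[
(T\Box R)\otimes(T\Box M)\cong T\Box(R\otimes M)\xrightarrow{T\Box\mu} T\Box M.
\]
Similarly, $T\Box$ applied to the unit $\eta\colon\mathbb S\to R$, combined with the canonical isomorphism $T\Box\mathbb S\cong\mathbb S$ (which holds because $T\Box-$ is strong monoidal and hence preserves the unit up to canonical iso), gives the unit of $T\Box R$, and $T\Box$ of the unit map $\mathbb S\otimes M\to M$ gives the unit axiom for the module. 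The associativity pentagon and unit triangle of Definition~\ref{defn:RModules} are preserved because $T\Box-$ is a (pseudo)functor: apply $T\Box-$ to each commuting square, insert the monoidal structure isomorphisms, and invoke coherence for strong monoidal functors. Naturality in $M$ is immediate since $T\Box-$ is a functor; naturality in $T$ follows because the monoidal structure isomorphisms are natural in $T$ as well (this is exactly the bilinearity of $-\Box-$).

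For the second statement, $T\Box(M\otimes_R N)\cong (T\Box M)\otimes_{T\Box R}(T\Box N)$, the key input is that $T\Box-$ preserves the relevant colimits. Here I would invoke the standing assumption recorded just before Definition~\ref{defn:RModules} (and \cite[Prop. A.27]{HHR}) that $T\Box-$ commutes with sifted colimits; a coequalizer is a sifted (indeed filtered-free reflexive) colimit, and $M\otimes_R N$ is by Definition~\ref{def:TensorOverR} the coequalizer of the reflexive pair $1_M\otimes\mu_N,\ \mu_M\otimes 1_N\colon M\otimes R\otimes N\rightrightarrows M\otimes N$. Thus $T\Box-$ carries this coequalizer diagram to the coequalizer diagram
\[
(T\Box M)\otimes(T\Box R)\otimes(T\Box N)\rightrightarrows (T\Box M)\otimes(T\Box N),
\]
using the strong monoidal isomorphisms to identify $T\Box(M\otimes R\otimes N)$ and $T\Box(M\otimes N)$, and one checks the two maps become exactly $1\otimes\mu_{T\Box N}$ and $\mu_{T\Box M}\otimes 1$ for the $T\Box R$-module structures from the first part. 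The colimit of the image is therefore $(T\Box M)\otimes_{T\Box R}(T\Box N)$.

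The main obstacle is bookkeeping of coherence isomorphisms rather than anything conceptually hard: one must be careful that the strong monoidal structure isomorphisms for $T\Box-$ interact correctly with the coequalizer — i.e., that the parallel pair really does go to the expected parallel pair after the identifications — and that everything is appropriately natural in $T$ so that the isomorphism is an isomorphism of functors, not merely objectwise. Given that the paper works with pseudofunctors throughout and only asks for commutativity up to isomorphism, I expect this to be stated with a brief appeal to strong monoidality and preservation of sifted colimits, leaving the coherence verification to the reader.
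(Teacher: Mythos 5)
Your proof follows the same route as the paper: for the first part, apply the strong symmetric monoidal structure of $T\Box-$ to the action map $R\otimes M\to M$; for the second, observe that $M\otimes_R N$ is a \emph{reflexive} coequalizer and that a strong monoidal functor commuting with sifted colimits preserves such coequalizers. One small caution on phrasing: only reflexive coequalizers are sifted colimits (general coequalizers are not), but you do correctly identify the pair as reflexive via the unit, so the substance matches the paper's argument.
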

\begin{proof}
Since the operation $T \Box (-)$ is symmetric monoidal and functorial, applying it to $R\otimes M\to M$ produces
\[
(T \Box R)\otimes (T \Box M)\cong T \Box (R\otimes M)\to T \Box M.
\]
The commutativity of the required diagrams is standard.

For the second part, we use the observation that a symmetric monoidal functor which commutes with sifted colimits preserves reflexive coequalizers. Since the coequalizer describing $M\otimes_{R} N$ is a reflexive one (using the unit), we conclude that for any $R$-modules $M$ and $N$,
\[
T \Box (M\otimes_{R} N)\cong (T \Box M)\otimes_{T \Box R} (T \Box N)
\]
as $T \Box R$-modules.
\end{proof}

It is here that we use the $\cO$-commutative monoid structure on $R$. We have a natural map of commutative monoids $T \Box R\to R$ for any $T \in\cO$, using the canonical map $T \to\ast$.

\begin{definition}\label{defn:RModBox}
If $R$ is a $\cO$-commutative monoid, $M$ is an $R$-module, and $T \in\cO_{G/G}$, then let
\[
T \Box_{R}M=R\otimes_{T \Box R}(T \Box M).
\]
\end{definition}

\begin{theorem}\label{thm:RModFSymMonoidal}
If $R$ is an $\cO$-commutative monoid, then the operation $\Box_{R}$ makes $\mRMod$ into a $\cO$-symmetric monoidal coefficient system.
\end{theorem}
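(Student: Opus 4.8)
The plan is to verify the two conditions in the definition of an $\cO$-symmetric monoidal category for the operation $T\Box_R M = R\otimes_{T\Box R}(T\Box M)$, namely (1) that on $\cO^{tr,Iso}$ this reduces to the canonical exponential map on the symmetric monoidal category $\mRMod(G/G)$, and (2) the associativity square for $\Box_R$. The key tool throughout is Proposition~\ref{prop:T BoxRMod}: $T\Box(-)$ is a strong symmetric monoidal functor commuting with sifted (hence reflexive) colimits, so it carries $R$-modules to $(T\Box R)$-modules and preserves relative tensor products over $R$. Everything about $\Box_R$ is built from $\Box$ together with extension of scalars along the augmentation $T\Box R\to R$ induced by $T\to\ast$, so the coherence data for $\Box_R$ will be assembled from the coherence data for $\Box$ (which we already have, since $\mcC$ is an $\cO$-symmetric monoidal category) together with standard base-change isomorphisms for relative tensor products.

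First I would check condition (1): if $T$ is a finite set with trivial $G$-action, then $T\Box R$ is the ordinary $T$-fold tensor power $\bigotimes_T R$, and $T\Box M=\bigotimes_T M$, computed in $\mcC(G/G)$. The augmentation $\bigotimes_T R\to R$ is the $T$-fold multiplication, and $R\otimes_{\bigotimes_T R}\bigotimes_T M$ is by definition the $T$-fold tensor power of $M$ taken in $R$-modules, i.e. the value of the canonical exponential map for the symmetric monoidal category $\mRMod(G/G)$ on $T$ and $M$. (One uses here that $\mathbb S\mhyphen\Mod=\mcC$, so the $T=\ast$ case is the identity, and that the relative tensor product is the coequalizer presentation of the symmetric monoidal product on $R$-modules.) The same argument applies levelwise at each $G/H$, using $i_H^\ast R$ in place of $R$, giving condition (1).

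Next I would establish the associativity square. Given $S\in\cO(G/H)$, $T\in\cO(G/H)$ (or, after restriction, the appropriate orbit-level data) one must produce a natural isomorphism $S\Box_R(T\Box_R M)\cong (S\times T)\Box_R M$. Unwinding definitions, the left side is $R\otimes_{S\Box R}\bigl(S\Box(R\otimes_{T\Box R}(T\Box M))\bigr)$. Applying the second half of Proposition~\ref{prop:T BoxRMod} to move $S\Box(-)$ inside the relative tensor product gives $R\otimes_{S\Box R}\bigl((S\Box R)\otimes_{S\Box(T\Box R)}(S\Box(T\Box M))\bigr)$, which by associativity of extension of scalars is $R\otimes_{S\Box(T\Box R)}(S\Box(T\Box M))$. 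Now invoke the associativity isomorphism for $\Box$ on $\mcC$ and on $\mSet$, i.e. $S\Box(T\Box X)\cong(S\times T)\Box X$ applied to $X=R$ and $X=M$, compatibly with the augmentations (this compatibility is where one checks that $S\times T\to\ast$ factors as $S\times T\to S\to\ast$ up to the same isomorphisms); this identifies the result with $R\otimes_{(S\times T)\Box R}((S\times T)\Box M)=(S\times T)\Box_R M$. The unit axiom ($\ast\Box_R M\cong M$) is similar but easier.

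The main obstacle is not any single isomorphism but the \emph{coherence}: one must check that the base-change isomorphisms for relative tensor products, the associativity isomorphisms for $\Box$, and the augmentation maps $T\Box R\to R$ all fit together into a genuine pseudofunctor on $\cO^{Iso}$ (bilinear in the two variables, associative up to coherent isomorphism). Since the paper works throughout with pseudofunctors and only requires commutativity of diagrams up to unspecified isomorphism, this amounts to tracking that each isomorphism used is natural in $S$, $T$, and $M$ and that the pentagon for $\Box_R$ follows from the pentagon for $\Box$ together with the standard coherence of iterated extension of scalars—a diagram chase that is routine given the hypothesis that $T\Box(-)$ commutes with sifted colimits (so that all the relative tensor products behave well). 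The levelwise/restriction compatibility is automatic since $\mcC$ is already a symmetric monoidal coefficient system and $i_H^\ast$ commutes with $\Box$ and with the relevant colimits, so $\mRMod$ inherits its restriction maps and the $\Box_R$ operation is compatible with them.
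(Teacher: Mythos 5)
Your proposal is correct, but it emphasizes a different part of the verification than the paper does. The paper's proof spends its energy on the \emph{bilinearity} of $\Box_R$: after a one-line remark on naturality (using that $\ast$ is terminal in $\cO$ for the first variable, and functoriality of $M\mapsto T\Box M$ for the second), it carries out the two explicit chains of isomorphisms
\[
(T\amalg Y)\Box_R M \cong (T\Box_R M)\otimes_R(Y\Box_R M),
\qquad
T\Box_R(M\otimes_R N)\cong (T\Box_R M)\otimes_R(T\Box_R N),
\]
exhibiting $\Box_R$ as strong monoidal in each factor, and then closes with ``the remaining properties are inherited from $\mcC$'' --- i.e.\ it treats the restriction-to-trivial-sets axiom and the Cartesian-product associativity square as the routine part. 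Your write-up inverts this: you verify in detail that restricting to $\cO^{tr,Iso}$ yields the canonical exponential map on $\mRMod(G/H)$ (the identification $R\otimes_{\bigotimes_T R}\bigotimes_T M\cong\bigotimes_T^R M$, which is indeed the standard base-change description of the relative tensor power) and you spell out the associativity isomorphism $S\Box_R(T\Box_R M)\cong (S\times T)\Box_R M$, while relegating the bilinearity in each variable to the ``coherence'' discussion at the end. Both routes use the same ingredients --- Proposition~\ref{prop:T BoxRMod} together with associativity/base-change of relative tensor products --- and both leave dual pieces of the checklist implicit, so the two proofs are complementary: the paper's version makes the multilinear-map structure explicit (which is what is needed to see $\mRMod$ as an object of the multicategory of symmetric monoidal coefficient systems), and yours makes the extension-of-canonical-exponentiation and Cartesian-associativity axioms explicit (which is what is needed to see that the bilinear map actually satisfies the extra conditions in the definition of an $\cO$-symmetric monoidal structure). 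Neither argument has a gap, but a fully self-contained proof would combine the two.
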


\begin{proof}
The naturality of $T \Box_{R}M$ in the first factor for isomorphisms follows from the fact that $\ast$ is the terminal object in $\cO$ and therefore any triangle with two legs the canonical maps to $\ast$ commutes. The naturality of $T \Box_{R}M$ in the second factor follows from the functoriality of $M\mapsto T \Box M$ in $R$-modules.

It remains to show that this is symmetric monoidal in both factors. For the first factor, we have natural isomorphism
\begin{align*}
(T \amalg Y)\Box_{R} M & =  R\otimes_{(T \amalg Y)\Box R}(T \amalg Y)\Box M \\
& \cong  R\otimes_{(T \Box R)\otimes (Y\Box R)} (T \Box M)\otimes (Y\Box M) \\
& \cong  (R\otimes_{R} R)\otimes_{(T \Box R)\otimes (Y\Box R)} (T \Box M)\otimes (Y\Box M) \\
& \cong  (R\otimes_{T \Box R} T \Box M)\otimes_{R} (R\otimes_{Y\Box R} Y\Box M) \\
& =  (T \Box_{R} M)\otimes_{R}(Y\Box_{R}M),
\end{align*}
where the first isomorphism is from the structure of $\Box$ and the remaining ones are standard.

The symmetric monoidalness in the second factor requires only slightly more work:
\begin{align*}
T \Box_{R} (M\otimes_{R} N) & =  R\otimes_{T \Box R} T \Box (M\otimes_{R} N) \\
& \cong  R\otimes_{T \Box R} (T \Box M)\otimes_{T \Box R} (T \Box N)\\ 
& \cong  R\otimes_{R} R\otimes_{T \Box R} (T \Box M)\otimes_{T \Box R} (T \Box N) \\
& \cong  \big(R\otimes_{T \Box R} (T \Box M)\big)\otimes_{R}\big(R\otimes_{T \Box R} (T \Box N)\big) \\
& =  (T \Box_{R}M)\otimes_{R}(T \Box_{R}N).
\end{align*}
The remaining properties are inherited from $\mcC$.
\end{proof}

We can also link free modules and the $\cO$-symmetric monoidal structure.

\begin{definition}\label{defn:Induced}
If $Z\in\mcC(G/G)$, then let $R\otimes Z$ be the obvious $R$-module.
\end{definition}

\begin{proposition}\label{prop:BoxRofFrees}
For any $Z\in\mcC$ and for any $T \in\cO_{G/G}$, we have a natural isomorphism of $R$-modules
\[
T \Box_{R}(R\otimes Z)\cong R\otimes (T \Box Z).
\]
\end{proposition}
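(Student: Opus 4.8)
The plan is to unwind both sides of the claimed isomorphism using the definitions and the distributivity results already established, reducing everything to a computation with coequalizers and the $\Box$-operation on $\mcC$. Recall that by Definition~\ref{defn:RModBox}, $T\Box_R(R\otimes Z) = R\otimes_{T\Box R}\big(T\Box(R\otimes Z)\big)$. The key input is the observation underlying Proposition~\ref{prop:T BoxRMod}: since $T\Box(-)$ is strong symmetric monoidal and functorial, applied to the free $R$-module $R\otimes Z$ it gives a canonical isomorphism $T\Box(R\otimes Z)\cong (T\Box R)\otimes(T\Box Z)$, and this is an isomorphism of $T\Box R$-modules, where the $T\Box R$-module structure on the right is the evident free one. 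So the first step is to substitute this in, obtaining
\[
T\Box_R(R\otimes Z)\cong R\otimes_{T\Box R}\big((T\Box R)\otimes(T\Box Z)\big).
\]

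The second step is the standard algebraic fact that for a map of commutative monoids $A\to B$ and any object $W$, one has $B\otimes_A (A\otimes W)\cong B\otimes W$ as $B$-modules — extension of scalars carries a free $A$-module to the corresponding free $B$-module. Here $A = T\Box R$, $B = R$ (via the canonical map $T\Box R\to R$ coming from $T\to\ast$, as noted just before Definition~\ref{defn:RModBox}), and $W = T\Box Z$. Applying this gives
\[
R\otimes_{T\Box R}\big((T\Box R)\otimes(T\Box Z)\big)\cong R\otimes(T\Box Z),
\]
which is exactly $R\otimes(T\Box Z)$ viewed as the free $R$-module on $T\Box Z$ in the sense of Definition~\ref{defn:Induced}. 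Composing the two isomorphisms yields the claim. To justify the second step at the level of generality in force here (only levelwise colimits and commutation of $T\Box(-)$ with sifted colimits), I would note that $B\otimes_A(A\otimes W)$ is computed as the coequalizer of $B\otimes A\otimes W\rightrightarrows B\otimes W$ where one arrow uses the $A$-action on $B$ and the other the action on $A\otimes W$; this coequalizer is reflexive (split by the unit of $A$), and one of the two parallel arrows is already a split epimorphism with the other as a section-compatible retraction, so the coequalizer is simply $B\otimes W$ — this is a formal consequence of the monoid structure and requires no hypothesis beyond the existence of the relevant colimits, which we have assumed.

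Finally I would check naturality in both $Z$ and $T$. Naturality in $Z$ is immediate since every isomorphism above was built from functorial constructions ($T\Box(-)$ on $\mcC$, extension of scalars, the free-module functor $R\otimes(-)$). Naturality in $T$ for isomorphisms follows as in the proof of Theorem~\ref{thm:RModFSymMonoidal}: the only subtlety is the map $T\Box R\to R$, which is natural because $\ast$ is terminal in $\cO$ and hence any triangle of canonical maps to $\ast$ commutes, so an isomorphism $T\cong T'$ intertwines the two augmentations to $R$ and the identification is compatible.

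I expect the main obstacle to be purely bookkeeping: keeping straight over which monoid each tensor product and each coequalizer is formed, and confirming that the isomorphism $T\Box(R\otimes Z)\cong (T\Box R)\otimes(T\Box Z)$ from Proposition~\ref{prop:T BoxRMod} really is one of $T\Box R$-modules (not merely of underlying objects) — but this is exactly what that proposition asserts, so there is no genuine difficulty. No step requires the full indexing-system structure; only that $T\in\cO_{G/G}$ so that the augmentation $T\Box R\to R$ lands back in $R$-modules, which is the point flagged in the remark preceding Definition~\ref{defn:RModBox}.
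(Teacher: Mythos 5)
Your proof is correct and follows exactly the paper's route: identify $T\Box(R\otimes Z)$ with the free $T\Box R$-module $(T\Box R)\otimes(T\Box Z)$ using that $T\Box(-)$ is strong symmetric monoidal, then apply the standard base-change isomorphism along $T\Box R\to R$. The extra detail you supply on verifying the base-change step via a split (reflexive) coequalizer and on naturality in $T$ and $Z$ is sound and simply makes explicit what the paper leaves to the reader.
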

\begin{proof}
The $T \Box R$-module $T \Box (R\otimes Z)$ is just $(T \Box R)\otimes (T \Box Z)$. The formula then follows immediately by the standard base-change isomorphisms.
\end{proof}

This provides intuition for the (albeit homotopical) localization result. Let $R$ be a commutative ring. If $L(R)$ is an $\cO$-commutative monoid, then the category of $L(R)$-modules is $\cO$-symmetric monoidal. Consider an acyclic $Z$. Then $L(R)\otimes Z$ is contractible. However, by the previous proposition, for any $T \in\cO(G/G)$, we have
\[
T \Box_{L(R)}(L(R)\otimes Z)\cong L(R)\otimes (T \Box Z).
\]
It must therefore be the case that $L(R)\otimes (T \Box Z)$ is also contractible.

If $\mcC$ is an $\cO$-symmetric monoidal Mackey functor, then the category of $R$-modules is naturally so too.

\begin{definition}\label{def:RelativeNorms}
Let $R$ be an $\cO$-commutative monoid in an $\cO$-symmetric monoidal Mackey functor $\mcC$. Then define functors $\normR_{K}^{H}\colon \mRMod(G/K)\to \mRMod(G/H)$, by
\[
\normR_{K}^{H}(M)= i_{H}^{\ast}R\otimes_{N_{K}^{H}i_{K}^{\ast}R}N_{K}^{H}M.
\]
\end{definition}

\begin{theorem}\label{thm:RModNorms}
With norms defined by $\normR_{K}^{H}$, $\mRMod$ becomes an $\cO$-symmetric monoidal Mackey functor.
\end{theorem}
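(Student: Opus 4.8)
The plan is to verify the two structural requirements for $\mRMod$ to be an $\cO$-symmetric monoidal Mackey functor: first, that each $\normR_K^H$ is a strong symmetric monoidal functor $\mRMod(G/K)\to\mRMod(G/H)$ (with respect to $\otimes_{i_K^\ast R}$ on the source and $\otimes_{i_H^\ast R}$ on the target), and second, that the collection of these functors, together with the restriction maps already present (Definition~\ref{defn:RMod}), satisfies the double coset formula up to natural isomorphism. Throughout I would work $H$-locally, replacing $G$ by $H$ as the conventions in the section allow, so it suffices to treat $\normR_K^G$.

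For the strong monoidal claim, I would unwind $\normR_K^G(M) = R\otimes_{N_K^G i_K^\ast R} N_K^G M$ and compute $\normR_K^G(M)\otimes_R \normR_K^G(M')$. Using associativity and symmetry of the balanced tensor product over commutative monoids, this is
\[
\big(R\otimes_{N_K^G i_K^\ast R} N_K^G M\big)\otimes_R \big(R\otimes_{N_K^G i_K^\ast R} N_K^G M'\big)\cong R\otimes_{N_K^G i_K^\ast R}\big(N_K^G M\otimes_{N_K^G i_K^\ast R} N_K^G M'\big),
\]
exactly as in the displayed computations in the proof of Theorem~\ref{thm:RModFSymMonoidal} (absorbing an $R\otimes_R R$). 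Now I invoke that $N_K^G$ is itself strong symmetric monoidal on $\mcC$ (this is part of the hypothesis that $\mcC$ is an $\cO$-symmetric monoidal Mackey functor), so $N_K^G(M)\otimes_{N_K^G i_K^\ast R} N_K^G(M') \cong N_K^G(M\otimes_{i_K^\ast R} M')$ — this is the relative version, which follows because $N_K^G$ preserves reflexive coequalizers (being symmetric monoidal and commuting with sifted colimits under our standing assumption) and the balanced tensor product is such a coequalizer. Combining, $\normR_K^G(M)\otimes_R\normR_K^G(M')\cong \normR_K^G(M\otimes_{i_K^\ast R} M')$, and the unit $i_K^\ast R$ goes to $R\otimes_{N_K^G i_K^\ast R} N_K^G i_K^\ast R\cong R$, so $\normR_K^G$ is strong monoidal.

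For the double coset formula, I would fix $K\subset L\subset G$ (and the iterated-norm compatibility $\normR_H^G\circ\normR_K^H\cong\normR_K^G$, which reduces to the corresponding isomorphism $N_H^G N_K^H\cong N_K^G$ on $\mcC$ combined with a base-change manipulation of the relative tensor products) and then compute $i_L^\ast \normR_K^G(M)$. The point is that $i_L^\ast$ commutes with the constructions in play: $i_L^\ast(R\otimes_{N_K^G i_K^\ast R} N_K^G M)\cong i_L^\ast R\otimes_{i_L^\ast N_K^G i_K^\ast R} i_L^\ast N_K^G M$ because $i_L^\ast$ is strong monoidal and preserves the defining coequalizer, and then the double coset formula for $\mcC$ itself rewrites $i_L^\ast N_K^G i_K^\ast(-)$ as $\bigoplus_{x\in L\backslash G/K} N_{L_x}^L i_{L_x}^\ast c_x^\ast(-)$ where $L_x$ is the stabilizer of $xK$; feeding this decomposition through the relative tensor product (using that $\otimes_R$ distributes over the $\oplus$ in $\mcC$, again because $T\Box(-)$ and balanced tensor products are built from colimits) yields $i_L^\ast\normR_K^G(M)\cong\bigotimes_{x\in L\backslash G/K,\,R}\normR_{L_x}^L i_{L_x}^\ast c_x^\ast M$, which is the multiplicative double coset formula in $\mRMod$. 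The main obstacle I expect is bookkeeping the base-change isomorphisms carefully enough that all the relative tensor products are formed over the correct rings and that the natural isomorphism $N_H^G N_K^H\cong N_K^G$ on $\mcC$ threads through the $R$-linearization coherently; none of the individual steps is deep, but making the coequalizer/colimit interchanges precise (so that "$i_L^\ast$ and $T\Box(-)$ and $\otimes_R$ all commute with reflexive coequalizers and finite direct sums") is where the real content sits, and it rests entirely on the standing sifted-colimit hypothesis stated before Definition~\ref{defn:RModules}.
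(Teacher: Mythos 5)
Your plan matches the route the paper indicates (the paper's ``proof'' is a one-sentence remark reducing the theorem to Theorem~\ref{thm:RModFSymMonoidal} plus the double-coset formulae in $\mcC$), and the strong-monoidality verification for $\normR_K^H$ is correct: absorb $R\otimes_R R$ to get everything over $N_K^G i_K^\ast R$, then use that $N_K^G$ is strong symmetric monoidal and preserves the reflexive coequalizer defining the balanced tensor product.

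However, there is a slip in your double-coset step that should be corrected. In a symmetric monoidal Mackey functor the norm is a \emph{multiplicative} transfer, so the decomposition you want is
\[
i_L^\ast N_K^G i_K^\ast(-)\;\cong\;\bigotimes_{x\in L\backslash G/K}N_{L_x}^L\, i_{L_x}^\ast\, c_x^\ast(-)
\]
(the symmetric monoidal product in $\mcC(G/L)$, obtained by applying $(-)\Box$ to the disjoint-union decomposition of $i_L^\ast(G/K)$ in $\mSet$), \emph{not} a direct sum $\bigoplus_x$. Consequently the justification ``$\otimes_R$ distributes over $\oplus$'' is not the right one and in fact does not make sense here: a direct sum of the $N_{L_x}^L i_{L_x}^\ast c_x^\ast i_K^\ast R$ is not even a commutative monoid one could tensor over. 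The correct deduction is the same iterated base-change manipulation you already invoked in Theorem~\ref{thm:RModFSymMonoidal} for the first variable of $\Box_R$: a map of commutative monoids $A_1\otimes\cdots\otimes A_n\to i_L^\ast R$ gives $i_L^\ast R\otimes_{A_1\otimes\cdots\otimes A_n}(B_1\otimes\cdots\otimes B_n)\cong (i_L^\ast R\otimes_{A_1}B_1)\otimes_{i_L^\ast R}\cdots\otimes_{i_L^\ast R}(i_L^\ast R\otimes_{A_n}B_n)$. Your final formula $\bigotimes_{R,x}\normR_{L_x}^L i_{L_x}^\ast c_x^\ast M$ is correct, so this reads as a notational slip rather than a conceptual gap, but the intermediate lines should be repaired. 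Also, the double-coset formula is between an arbitrary $L\subset G$ and $K\subset G$; there is no reason to impose $K\subset L$, and the letter $H$ in your parenthetical about iterated norms should be reconciled with the letters you just fixed.
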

The proof of this follows from the corresponding statement about the $\cO$-symmetric monoidal category structure, together with the double-coset formulae in $\mcC$.

For any universe $U$ and for any algebra $R$ over $\cL(U)$, there is a good category of modules for $R$ which is an $\cO_L(U)$-symmetric monoidal Mackey functor \cite{BHNinftyCat}. This generalizes the classical result for commutative algebras, and it is expected to hold for any $\Ninfty$ operad instead of just linear isometries operads. Using this, we can show that Bousfield localization relative to an $\cL(U)$-algebra $R$ always preserves $\cL(U)$-algebras.

\begin{theorem}
If $\m{R}$ is an $\cL(U)$-algebra or a commutative algebra, then the Bousfield localization $\L_R(-)$ preserves $\cO$-algebras for any $\Ninfty$ operad $\cO$ mapping to $\cL(U)$.
\end{theorem}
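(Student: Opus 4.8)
The plan is to reduce to Theorem~\ref{thm:LocalizationandOperads} exactly as in the proof of Theorem~\ref{thm:Localization}. Writing $L_R$ for the Bousfield localization at $R$, it suffices to check that for every cofibrant $L_R$-acyclic $Z$ the unit map $S^0\to P_\cO[Z]$ to the free $\cO$-algebra is an $L_R$-equivalence, equivalently that
\[
L_R\big(\cO_{n+}\wedge_{\Sigma_n}Z^{\wedge n}\big)\simeq\ast
\]
for all $n>0$. Filtering $\cO_{n}$ by its cellular filtration as a $G\times\Sigma_n$-spectrum, the associated graded is a wedge of induced cells $(G\times\Sigma_n/\Gamma_T)_+\wedge S^m$ with $T$ ranging over the admissible sets of $\cO$; this is precisely the $\Ninfty$-operad condition of \cite{BHNinfty}, and it is here that we use that $\cO$ is an $\Ninfty$ operad. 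Smashing over $\Sigma_n$ with $Z^{\wedge n}$ and using the identification $(G\times\Sigma_n/\Gamma_T)_+\wedge_{\Sigma_n}Z^{\wedge n}\simeq T\Box Z$ from the $G$-symmetric monoidal structure on $\m{\Sp}$, as in the proof of Theorem~\ref{thm:Localization}, the problem reduces to showing that the category of $L_R$-acyclics is an $\cO$-symmetric monoidal subcategory of $\m{\Sp}$: that is, for $Z$ an $L_R$-acyclic and $T$ an admissible set of $\cO$, we need $T\Box Z$ to be $L_R$-acyclic.

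To see this, I would pass to module categories. The $L_R$-acyclics are exactly the $R$-acyclic spectra, i.e.\ those $Z$ with $R\wedge Z\simeq\ast$, equivalently those for which the free $R$-module $R\otimes Z$ of Definition~\ref{defn:Induced} is contractible. Because $R$ is an $\cL(U)$-algebra, its category of modules $\mRMod$ is an $\cO_{L(U)}$-symmetric monoidal Mackey functor \cite{BHNinftyCat} (the homotopical version of Theorems~\ref{thm:RModFSymMonoidal} and \ref{thm:RModNorms}), and since $\cO$ maps to $\cL(U)$ the indexing system of $\cO$ is contained in $\cO_{L(U)}$, so $\mRMod$ is in particular $\cO$-symmetric monoidal. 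For any admissible $T\in\cO(G/H)$ of cardinality $n>0$, the derived functor $T\Box_R(-)$ is, underlyingly, an $n$-fold smash power followed by base change along $T\Box R\to R$, hence carries a contractible $R$-module to a contractible $R$-module. Now Proposition~\ref{prop:BoxRofFrees} gives a natural isomorphism
\[
T\Box_R(R\otimes Z)\cong R\otimes(T\Box Z),
\]
so if $Z$ is $L_R$-acyclic then $R\otimes Z\simeq\ast$, whence $R\otimes(T\Box Z)\cong T\Box_R(R\otimes Z)\simeq\ast$; that is, $T\Box Z$ is $L_R$-acyclic. Combining with the previous paragraph, $L_R$ preserves $\cO$-algebras. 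When $\m{R}$ is merely a commutative algebra, it is an $\cL(U)$-algebra for a complete universe $U$, every $\Ninfty$ operad admits a map to that $\cL(U)$, and the commutative case of the statement follows from the case already proved.

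The main obstacle is not the module-theoretic step, which is precisely the intuition recorded after Proposition~\ref{prop:BoxRofFrees}, but the homotopical bookkeeping surrounding it. One must know that the cellular filtration of $\cO_n$ for an $\Ninfty$ operad has associated graded built out of exactly the cells $(G\times\Sigma_n/\Gamma_T)_+$ with $T$ admissible for $\cO$, and that the resulting filtration of $\cO_{n+}\wedge_{\Sigma_n}Z^{\wedge n}$ is well enough behaved that acyclicity of the associated graded forces acyclicity of the total spectrum; and one must check that $T\Box_R(-)$, $R\otimes(-)$, and the isomorphism of Proposition~\ref{prop:BoxRofFrees} are used in their homotopically meaningful forms, so that the conclusions survive in the homotopy category. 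These are exactly the model-categorical compatibility issues flagged in the introduction; granting the inputs of \cite{BHNinfty} and \cite{BHNinftyCat}, the argument above goes through.
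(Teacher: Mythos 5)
Your proposal is correct and follows essentially the same strategy as the paper's proof: reduce via the cellular filtration of Theorem~\ref{thm:Localization} to showing norms of $R$-acyclics are $R$-acyclic, shrink the $\cO$-admissibles into the $\cL(U)$-admissibles, and conclude by passing to $\mRMod$ with its $\cO_{L(U)}$-symmetric monoidal structure and using the free-module base-change identity (your use of Proposition~\ref{prop:BoxRofFrees} with $T\Box_R$ is exactly the paper's ``apply $\normR_K^H$ to $i_K^\ast(R\wedge Z)\to\ast$ and expand the left-hand side''). The only cosmetic differences are that you re-derive the skeletal filtration step rather than citing Theorem~\ref{thm:Localization} directly, and you write the relative norm in the $T\Box_R$ form rather than as $\normR_K^H$.
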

\begin{proof}
By Theorem~\ref{thm:Localization}, it suffices to show that for any (cofibrant) acyclic $Z$, the spectrum
\[
N_K^Hi_K^\ast Z
\]
is acyclic for any $G/H\in\cOrb_G$ and for any admissible $H/K\in\cO(G/H)$. Since $\cO$ is an $\Ninfty$ operad mapping to $\cL(U)$, the admissibles for $\cO$ are all admissible for $\cL(U)$ \cite{BHNinfty}, and hence it suffices to consider the case that $\cO=\cL(U)$. In this case, the result follows immediately from applying $\normR_K^H$ to the weak equivalence
\[
i_K^\ast(R\wedge Z)\xrightarrow{\simeq} \ast
\]
and expanding out the left-hand side.
\end{proof}

\section{Other Categories}

\subsection{Compact Lie Groups}
Our discussion has focused on finite groups, but largely because all subgroups of a finite group are of finite index. This means that all homogeneous spaces for finite groups are also finite, and thus classified by a map from $G$ (or a subgroup) to some symmetric group. This arises in a subtle way: the map $G/H\to G/G$ is a finite covering map for any $H\subset G$, and hence we have norms from $H$ to $G$.

For a compact Lie group of positive dimension, this is no longer the case. In fact, very curious things can happen. We have a basic observation, though.

\begin{proposition}\label{prop:CompactLieNorms}
If $K\subset H$ is a closed subgroup of finite index, then we have a symmetric monoidal norm functor
\[
N_{K}^{H}\colon \Sp^{K}\to\Sp^{H}.
\]
The same is true for $K$-objects in any [topological] symmetric monoidal category.
\end{proposition}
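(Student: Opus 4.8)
The plan is to isolate the one consequence of the finite-index hypothesis that drives everything: the homogeneous space $H/K$ is a \emph{finite} $H$-set. Once that is in hand, both assertions follow by transcribing constructions that already exist for finite groups, none of which ever uses finiteness of the ambient group---only finiteness of the orbit $H/K$.

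For the statement about $K$-objects in a (topological) symmetric monoidal category $\cC$, I would argue exactly as in Example~\ref{ex:GObjects}. Modeling $K$-objects of $\cC$ as $\cC^{B_{H/K}}$, where $B_{H/K}$ is the translation category of the $H$-set $H/K$ (a connected groupoid with automorphism group $K$, so $\cC^{B_{H/K}}$ is the usual category of $K$-objects of $\cC$), the canonical map of finite $H$-sets $H/K\to H/H$ passes to translation categories to give a functor $B_{H/K}\to B_{H/H}$ that is a \emph{finite} covering category in the sense of \cite[A.3]{HHR}: the fiber over the unique object is the $[H:K]$-element set $H/K$, and morphisms lift uniquely in both directions. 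To such a finite covering category \cite[A.3]{HHR} associates a canonical strong symmetric monoidal transfer functor, which here is the desired
\[
N_{K}^{H}\colon \cC^{B_{H/K}}\simeq\cC^{K}\longrightarrow \cC^{B_{H/H}}\simeq\cC^{H}.
\]
Continuity of the indexed-product formula defining it gives the topological enrichment. This step is essentially formal.

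For $N_{K}^{H}\colon\Sp^{K}\to\Sp^{H}$ the categorical argument no longer suffices, since genuine $H$-spectra are not the $H$-objects of a single symmetric monoidal category---the indexing universe intervenes. Instead I would transcribe the Hill--Hopkins--Ravenel norm. Modeling equivariant spectra by orthogonal spectra, for an orthogonal $K$-spectrum $X$ one forms the indexed smash power over the finite $H$-set $H/K$ exactly as in \cite[\S\S 2.2.3, B.5]{HHR}: a choice of coset representatives exhibits it as the $[H:K]$-fold smash power $X^{\wedge [H:K]}$ with $H$ acting by permuting the smash factors through its action on $H/K$ and twisting by the relevant elements; the finiteness of $[H:K]$ is precisely what makes this smash power legitimate. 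This defines $N_{K}^{H}X$ as an orthogonal $H$-spectrum, and strong symmetric monoidality, $N_{K}^{H}(X\wedge Y)\cong N_{K}^{H}X\wedge N_{K}^{H}Y$ compatibly with the symmetry and the unit, is the distributivity of $\bigwedge_{H/K}$ over $\wedge$ in each smash slot, established in \cite{HHR}. That this construction has the correct derived behavior---on cofibrant objects, under change of universe, with respect to geometric fixed points---is the content of \cite[Appendix B]{HHR}, which, consistent with the conventions of this note, I would not reprove; the essential point to emphasize is that every finiteness invoked there is finiteness of the orbit $H/K$, never of $H$ itself.

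The main obstacle is therefore not existence of the functor---which, as above, is a direct transcription of \cite{HHR}---but confirming that the homotopical bookkeeping of \cite[Appendix B]{HHR} genuinely survives the passage from finite $H$ to compact Lie $H$: the cofibrancy hypotheses, the interaction with fixed points of positive-dimensional subgroups, and the change-of-universe comparisons must each be re-examined. I expect no surprises, precisely because those arguments are governed by the combinatorics of the finite $H$-set $H/K$ rather than by the structure of $H$; but this is the step carrying the genuine (and, per the stated conventions of this note, deliberately unaddressed) subtlety.
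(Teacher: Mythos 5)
The paper states this proposition without proof, treating it as immediate from the Hill--Hopkins--Ravenel norm construction (and the covering-category construction of Example~\ref{ex:GObjects}), which is precisely the route you take. Your sketch is correct and matches the argument the authors clearly have in mind; your closing caveat about the homotopical bookkeeping in \cite[Appendix B]{HHR} is also consistent with the paper's announced convention of working only at the level of symmetric monoidal categories and not re-deriving model-categorical compatibility.
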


We can now play the same game we played for finite groups. The difficulty, however, is that the role of $\mSet$ is still played by the symmetric monoidal coefficient system that assigns to $G/H$ the category of all {\emph{finite}} $H$-sets. In particular, if $H$ is connected, then $\Set^{H}$ is just $\Set$.

\begin{remark}
The reader might wonder why we consider only finite index closed subgroups. The heart of the matter is the nature of a $G$-$E_{\infty}$ operad and genuine $G$-spectra. These operads have the distinguished feature that if $\cO$ is a $G$-$E_{\infty}$-operad (satisfying certain cofibrancy conditions), then the canonical map
\[
\cO_{n+}\times_{\Sigma_{n}} E^{\wedge n}\to E^{\wedge n}/\Sigma_{n}
\]
is a $G$ weak equivalence for a cofibrant genuine $G$-spectrum $E$, and thus understanding the symmetric powers of a $G$-spectrum is tantamount to understanding these homotopy symmetric powers.

By definition, the $n$\textsuperscript{th} space in a $G$-$E_{\infty}$-operad is a universal space for the family of closed subgroups of $G\times\Sigma_{n}$ which intersect $\Sigma_{n}$ trivially, and a closed subgroup intersects $\Sigma_{n}$ trivially if and only if is the graph of a homomorphism from a closed subgroup $H\subset G$ to $\Sigma_{n}$. Thus we reduce to the case of understanding closed subgroups of $G$ together with a finite index subgroup thereof.
\end{remark}

Theorem~\ref{thm:Localization} still holds exactly as written, with no modifications to the proof needed. 
\begin{theorem}
Let $G$ be a compact Lie group and let $L$ be a localization of the category of genuine $G$-spectra.
\begin{enumerate}
\item If the category of $L$-acyclics is an $\cO_L(U)$-symmetric monoidal subcategory of $\m{\Sp}$, then $L$ takes $\cL(U)$-algebras to $\cL(U)$-algebras.
\item If the category of $L$-acyclics is a genuine $G$-symmetric monoidal subcategory of $\m{\Sp}$, then $L$ takes commutative ring spectra to commutative ring spectra.
\end{enumerate}
\end{theorem}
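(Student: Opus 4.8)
The plan is to transcribe the proof of Theorem~\ref{thm:Localization} and check that every ingredient survives the passage from finite groups to compact Lie groups. By Theorem~\ref{thm:LocalizationandOperads} it suffices to show that for every cofibrant $L$-acyclic $Z$ and every $n>0$ the spectrum $\cL(U)_{n+}\wedge_{\Sigma_n}Z^{\wedge n}$ is $L$-acyclic, since this forces the unit map $S^0\to P_{\cL(U)}[Z]$ to be an $L$-equivalence and hence $L$ to preserve $\cL(U)$-algebras.

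First I would analyze the cellular filtration of $\cL(U)_n$ viewed as a $(G\times\Sigma_n)$-space. As recalled in the remark preceding the statement, $\cL(U)_n$ is a universal space for the family of closed subgroups of $G\times\Sigma_n$ meeting $\Sigma_n$ trivially, and every such subgroup is the graph $\Gamma_T$ of a homomorphism $H\to\Sigma_n$ out of a closed subgroup $H\subset G$; its kernel is of finite index in $H$, so $\Gamma_T$ records a finite $H$-set $T$ of cardinality $n$ whose point stabilizers are finite-index closed subgroups, exactly the setting in which Proposition~\ref{prop:CompactLieNorms} supplies a norm $N_K^H$. Thus the associated graded of the filtration is a wedge of cells $(G\times\Sigma_n/\Gamma_T)_+\wedge S^m$, and by the definition of the linear isometries indexing system only those $T$ with $T\in\cO_{L(U)}(G/H)$ actually occur.

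Smashing over $\Sigma_n$ with $Z^{\wedge n}$ and using that $S^m$ has trivial action, the associated graded of $\cL(U)_{n+}\wedge_{\Sigma_n}Z^{\wedge n}$ becomes a wedge of spectra $(G\times\Sigma_n/\Gamma_T)_+\wedge_{\Sigma_n}Z^{\wedge n}\wedge S^m$, and by the definition of the $G$-symmetric monoidal structure on $\m{\Sp}$ in the compact Lie setting (Example~\ref{ex:GObjects} together with Proposition~\ref{prop:CompactLieNorms}) each summand is $(T\Box Z)\wedge S^m$. If the acyclics form an $\cO_{L(U)}$-symmetric monoidal subcategory, then $T\Box Z$ is again acyclic for each admissible $T$, so the associated graded, and therefore $\cL(U)_{n+}\wedge_{\Sigma_n}Z^{\wedge n}$, is $L$-acyclic; this gives (1). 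For (2), take $U$ complete, so that $\cO_{L(U)}=\mSet$ and $\cL(U)$ is a $G$-$E_\infty$-operad; the collapse map $\cL(U)_{n+}\wedge_{\Sigma_n}Z^{\wedge n}\to Z^{\wedge n}/\Sigma_n$ is then a weak equivalence for cofibrant genuine $G$-spectra (the content of the same remark), so the free commutative ring on $Z$ agrees with the free $\cL(U)$-algebra, $Sym(Z)\simeq_L S^0$, and the localization can be carried out internally to commutative rings.

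The main obstacle is bookkeeping rather than conceptual: one must verify that the cellular filtration of $\cL(U)_n$ genuinely involves only cells induced from graph subgroups $\Gamma_T$ with $T$ a finite $H$-set --- that is, that ``a finite set indexed by a homomorphism out of $\Sigma_n$'' translates correctly into ``a finite $H$-set with finite-index closed stabilizers'' for compact Lie $G$ --- and that the identification of $(G\times\Sigma_n/\Gamma_T)_+\wedge_{\Sigma_n}Z^{\wedge n}$ with $T\Box Z$, immediate from the norm construction in the finite case, persists when $H$ is allowed to be positive-dimensional. Granting Proposition~\ref{prop:CompactLieNorms} and the cited model-categorical facts about $G$-$E_\infty$ operads over compact Lie groups, no new input is needed and the finite-group proof transcribes without change.
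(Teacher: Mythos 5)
Your proposal is correct and follows exactly the route the paper takes: the paper's own proof consists of a single sentence asserting that the finite-group proof of Theorem~\ref{thm:Localization} carries over verbatim, and your write-up simply spells out that transcription, checking that the skeletal filtration of $\cL(U)_n$, the identification of cells with graph subgroups $\Gamma_T$ for finite $H$-sets $T$, the norm interpretation of $(G\times\Sigma_n/\Gamma_T)_+\wedge_{\Sigma_n}Z^{\wedge n}$, and the collapse map for a complete universe all survive the passage to compact Lie $G$ via the finiteness of the relevant index (Proposition~\ref{prop:CompactLieNorms}) and the remark on $G$-$E_\infty$-operads.
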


However, our intuition is very different! An example showing how weird this can be and how weirdness is in some sense generic is in order. Consider $G=S^{1}$, where essentially none of our intuition from finite groups holds.

\begin{example}
Let $G=S^{1}$, and let $R$ be a commutative $S^{1}$-spectrum. Then since $S^{1}$ has no finite index subgroups, from the point of view of $S^{1}$ itself, $R$ has a multiplication up to homotopy that is coherently unital, associative, and commutative. However, any closed proper subgroup of $S^{1}$ is finite and therefore has loads of finite index subgroups. Thus the restriction of $R$ to any proper subgroup $H$ results in a commutative $H$-ring spectrum (the kind described above). In the modern parlance, a commutative $S^{1}$-ring spectrum looks like a naive commutative ring spectrum, but it restricts to a genuine commutative ring spectrum for any proper subgroup.
\end{example}

As a consequence of this, the fundamental localization counterexample, Proposition~\ref{prop:FundamentalExample}, does not work the same way in the compact Lie case.

\begin{proposition}\label{prop:CompactLieCounterexample}
Let $\ccZ$ be the triangulated subcategory generated by all spectra of the form $S^1/H_+$ for $H\subsetneq S^1$, and let $L(-)$ be the associated nullification functor. Then $L(-)$ preserves commutative ring spectra.
\end{proposition}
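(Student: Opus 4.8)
The plan is to apply part (2) of Theorem~\ref{thm:Localization}, which (as noted above) holds without change for $G=S^1$: it suffices to show that the acyclic category $\ccZ$, extended to a symmetric monoidal coefficient system as in Proposition~\ref{prop:GeomFPExample}, is a genuine $S^1$-symmetric monoidal subcategory of $\m{\Sp}$. The reason this works for $S^1$, though it fails for finite groups, is that $S^1$ is connected and so has no proper closed subgroup of finite index: at the level $S^1/S^1$ the finite $S^1$-sets are all trivial, there are no norms $N_H^{S^1}$ to accommodate, and the obstruction of Proposition~\ref{prop:FundamentalExample} — that there is no unital ring map out of a contractible restriction — never arises.

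First I would identify the acyclics. Writing $\cF$ for the family of finite (equivalently, proper closed) subgroups of $S^1$, the projection formula $S^1/H_+\wedge W\simeq\mathrm{Ind}_H^{S^1}i_H^{\ast}W$ exhibits $\ccZ(S^1/S^1)$ as a $\otimes$-ideal, and since $i_H^{\ast}E\cF$ is $H$-equivariantly contractible for every proper (hence finite) $H$ one gets $\ccZ(S^1/S^1)=E\cF_+\wedge\m{\Sp}(S^1/S^1)$, the ideal of $\cF$-torsion $S^1$-spectra; in particular $L$ is the smashing localization $X\mapsto\widetilde{E\cF}\wedge X$. Being a $\otimes$-ideal, $\ccZ(S^1/S^1)$ is closed under the symmetric monoidal powers $Z\mapsto Z^{\wedge n}$, and these are the only $\Box$-operations at the top level. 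It remains to treat the proper subgroups, and here I would show outright that $\ccZ(S^1/H)=\m{\Sp}(S^1/H)=\Sp^{H}$ for every $H=C_m$: since $i_H^{\ast}E\cF\simeq\ast$, for any $X\in\Sp^{S^1}$ the torsion spectrum $E\cF_+\wedge X$ lies in $\ccZ(S^1/S^1)$ and restricts to $i_H^{\ast}(E\cF_+\wedge X)\simeq i_H^{\ast}X$, so the localizing subcategory $\ccZ(S^1/H)$ generated by the restrictions of $\ccZ(S^1/S^1)$ contains $i_H^{\ast}X$ for every $S^1$-spectrum $X$; feeding in suitable spectra of the form $\widetilde{E\cP'}\wedge E\cP_+$ for nested families $\cP\subset\cP'$ of finite subgroups recovers the isotropy-separation generators of $\Sp^{H}$, whence $\ccZ(S^1/H)=\Sp^{H}$. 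Being everything, $\Sp^{H}$ is trivially closed under all the $\Box$-operations at the levels indexed by proper subgroups, in particular under the norms $N_K^{H}$.

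Thus $\ccZ$ is a genuine $S^1$-symmetric monoidal subcategory of $\m{\Sp}$, and Theorem~\ref{thm:Localization}(2) gives that $L$ preserves commutative ring spectra. (One can say the same thing more concretely: $L(R)=\widetilde{E\cF}\wedge R$, where $\widetilde{E\cF}$ is $E_\infty$ as a smashing localization of the sphere, and its restriction to each proper subgroup is contractible — hence a commutative ring for vacuous reasons — so, there being no norms $N_H^{S^1}$ to check, $\widetilde{E\cF}$, and therefore $L(R)$, is a genuine commutative $S^1$-ring spectrum.) The main obstacle is the identification $\ccZ(S^1/H)=\Sp^{H}$ for proper $H$; once one grants the standard fact that $\Sp^{H}$ is generated as a localizing subcategory by its isotropy-separation pieces, the rest is bookkeeping, and the essential point is simply the absence of proper finite-index subgroups of $S^1$.
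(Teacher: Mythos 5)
Your proof is correct and is a careful filling-in of the argument the paper only sketches: apply Theorem~\ref{thm:Localization}(2) for compact Lie $G$, using that $S^1$ is connected and so has no proper closed subgroup of finite index, whence at $S^1/S^1$ the only $\Box$-operations are ordinary smash powers (automatically preserved once $\ccZ(S^1/S^1)=E\cF_+\wedge\m{\Sp}(S^1/S^1)$ is identified as a $\otimes$-ideal), while at every proper $C_m$ one shows $\ccZ(S^1/C_m)=\Sp^{C_m}$. The one step that deserves emphasis is the last one, and you handle it correctly: merely noting $S^0=i_{C_m}^*(E\cF_+)\in\ccZ(S^1/C_m)$ would not suffice (as $S^0$ alone does not generate $\Sp^{C_m}$ as a localizing subcategory), and your isotropy-separation detour is genuinely needed; it works because universal $S^1$-spaces for families of finite subgroups restrict on each $C_m$ to the universal $C_m$-spaces for the corresponding families, so all geometric-isotropy generators of $\Sp^{C_m}$ arise as restrictions of $E\cF_+$-torsion $S^1$-spectra.
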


Put another way, let $V_{n}$ be a sequence of representations of $S^{1}$ with trivial fixed points and such that every irreducible representation of $S^{1}$ (besides the trivial one) occurs in some $V_{n}$, and let $a_{V_{n}}\colon S^{0}\to S^{V_{n}}$ be the inclusion of the origin and the point at infinity. Then $a_{V_{n}}$ is essential (though here we do not know that the restriction to any particular proper subgroup is trivial), but for any $H\subsetneq S^1$, there is an $n$ such that
\[
i_{H}^{\ast}(a_{V_1}\dots a_{V_n})=0.
\]
Then the ring
\[
S^{0}[a_{V_{1}}^{-1},a_{V_{2}}^{-1},a_{V_{3}}^{-1},\dots]
\]
is a commutative $S^{1}$ ring spectrum.

In fact, the localization in Proposition~\ref{prop:CompactLieCounterexample} is a smashing localization, and the spectrum with which we smash, $L(S^0)$, is the suspension spectrum of $\tilde{E}\mathcal P$, where $\mathcal P$ is the family of proper subgroups. 

\subsection{And Beyond}
All of what we have described relied on the underlying presheaf of symmetric monoidal categories on the orbit category. This provided a description of equivariant spectra and coefficient systems (which in turn were kinds of pre-sheaves with transfers) which tied together the multiplications and presheaf structures. This type of argument should then hold generically true for presheaves on a topos. Before continuing, we present a warning, showing that our localization result will also hold motivically, provided the realization functor from motivic spectra over $\mathbb R$ to $C_2$-equivariant spectra is lax monoidal.

\begin{theorem}
Consider the motivic stable category over $\mathbb R$. Let $\rho\colon S^{0}=\mu_{2}\to \mathbb G_{m}$ be the inclusion of $\pm 1$ into $\mathbb G_{m}$. Then the localization $S^{0}[\rho^{-1}]$ cannot be a commutative ring spectrum.
\end{theorem}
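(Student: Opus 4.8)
The plan is to transport McClure's obstruction (Proposition~\ref{prop:FundamentalExample} in the case $G=C_2$) back to the motivic world along $C_2$-equivariant Betti realization. Write $\mathrm{Re}\colon\mathrm{SH}(\mathbb{R})\to\Sp^{C_2}$ for the realization functor sending a motivic spectrum over $\mathbb{R}$ to the $C_2$-spectrum of its $\mathbb{C}$-points with the complex-conjugation action. The two structural facts I would invoke are: (i) $\mathrm{Re}$ is a left adjoint, hence preserves homotopy colimits; and (ii) $\mathrm{Re}$ is lax symmetric monoidal --- this is precisely the hypothesis flagged just before the statement --- hence carries commutative ring spectra to commutative ring spectra, by the standard fact that a lax symmetric monoidal functor preserves commutative monoid objects. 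I also need three standard computations of the realization functor: $\mathrm{Re}(S^0)\simeq S^0$ (the $C_2$-equivariant sphere), $\mathrm{Re}(\mathbb{G}_m)\simeq S^{\sigma}$ where $\sigma$ is the sign representation of $C_2$ and $\mathbb{G}_m$ is pointed at $1$, and, under this identification, $\mathrm{Re}(\rho)\simeq a_{\sigma}\colon S^0\to S^{\sigma}$, the inclusion of the $C_2$-fixed points. (All three are known; see the literature on Betti realization, e.g.\ Heller--Ormsby and Bachmann.)

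First I would write the localization as a telescope, $S^0[\rho^{-1}]\simeq\operatorname*{hocolim}\big(S^0\xrightarrow{\rho}\mathbb{G}_m\xrightarrow{\rho}\mathbb{G}_m^{\wedge 2}\xrightarrow{\rho}\cdots\big)$, with each transition map a smash power of $\rho$. Applying $\mathrm{Re}$ and using that it commutes with this sequential homotopy colimit, together with the computations above, yields $\mathrm{Re}\big(S^0[\rho^{-1}]\big)\simeq\operatorname*{hocolim}\big(S^0\xrightarrow{a_{\sigma}}S^{\sigma}\xrightarrow{a_{\sigma}}S^{2\sigma}\xrightarrow{a_{\sigma}}\cdots\big)\simeq S^0[a_{\sigma}^{-1}]$ in $\Sp^{C_2}$.

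Now suppose, toward a contradiction, that $S^0[\rho^{-1}]$ admits the structure of a commutative ring spectrum in $\mathrm{SH}(\mathbb{R})$. By (ii), $\mathrm{Re}\big(S^0[\rho^{-1}]\big)\simeq S^0[a_{\sigma}^{-1}]$ would then be a commutative $C_2$-ring spectrum. But for $G=C_2$ the reduced regular representation $\bar{\rho}$ is exactly $\sigma$, so $a_{\bar{\rho}}=a_{\sigma}$, and Proposition~\ref{prop:FundamentalExample} says precisely that $S^0[a_{\sigma}^{-1}]$ carries no commutative $C_2$-ring structure: its restriction to the trivial subgroup is contractible (the restriction of $a_{\sigma}$ is the nullhomotopic map $S^0\to S^1$, and a telescope of null maps is trivial), so a unital commutative multiplication would give a ring map $N_e^{C_2}i_e^{\ast}S^0[a_{\sigma}^{-1}]\to S^0[a_{\sigma}^{-1}]$ out of a contractible spectrum, forcing $S^0[a_{\sigma}^{-1}]\simeq\ast$, which is false. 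This contradiction proves the theorem.

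I expect the only genuinely non-formal inputs to be the identification $\mathrm{Re}(\rho)\simeq a_{\sigma}$ (equivalently, pinning down the realizations of $\mathbb{G}_m$ and of the class $\rho$) and the lax symmetric monoidality of $\mathrm{Re}$, the latter being the hypothesis the statement is conditioned on. Everything else is formal: a left adjoint commutes with the telescope defining the localization, a lax symmetric monoidal functor transports the commutative ring structure, and the remaining obstruction is Proposition~\ref{prop:FundamentalExample} for $C_2$. So the argument proper is a short formal deduction once the realization computations are in hand, which is where I would expect the real work (or the real citation-hunting) to lie.
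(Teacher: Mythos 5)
Your argument is essentially identical to the paper's: both prove the theorem by passing to complex points (Betti realization), which the paper assumes is symmetric monoidal, and observing that the image of $S^0[\rho^{-1}]$ is $S^0[a_{\bar\rho}^{-1}]$ for $G=C_2$, contradicting Proposition~\ref{prop:FundamentalExample}. You simply spell out the details the paper leaves implicit (commutation with the telescope colimit, identification of $\mathrm{Re}(\mathbb{G}_m)$ with $S^\sigma$, and $\bar\rho=\sigma$ for $C_2$).
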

\begin{proof}
The simplest proof is to take complex points. This produces a symmetric monoidal functor from the motivic stable category over $\mathbb R$ to genuine $\mathbb Z/2$-equivariant spectra, and the value of this on $S^{0}[\rho^{-1}]$ is $S^{0}[a_{\bar{\rho}}^{-1}]$.
\end{proof}

This shows that we need a similar language to understand these enriched commutative rings in presheaves on a topos. For motivic homotopy, the analogous indexing category is built from schemes over a fixed scheme. In this framework, the symmetric monoidal coefficient system $\mSet$ should be replaced by a [free] symmetric monoidal category on the comma category of maps to an object. Many of our results should then hold with little change.

\begin{remark}
The motivic context over a fixed field $k$ looks like it will blend the finite and compact Lie cases. Associated to a variety $V$, Hu describes a tensoring operation on commutative ring spectra analogous to our norms \cite{Hu}. If $V=Spec(F)$ for a finite extension field $F$ of $k$, then this functor should extend to a symmetric monoidal functor on motivic spectra. Thus we see norms for any finite extension fields, but on commutative rings, we also see additional norm maps.
\end{remark}

\bibliographystyle{plain}

\bibliography{G-Symm}

\end{document}